\UseRawInputEncoding
\documentclass[11pt,leqno]{amsart}
\usepackage{txfonts}
\usepackage{amsmath,amsthm,amsfonts}
\usepackage{latexsym}
\usepackage{amssymb}
\usepackage{mathrsfs}
\usepackage{appendix}
\usepackage[colorlinks,
            linkcolor=red,
            anchorcolor=blue,
            citecolor=green,
            ]{hyperref}
% 给目录颜色做设置
%\input{psfig}
\usepackage[dvips]{epsfig}

\newcounter{RomanNumber}

\renewcommand{\thesection}{\arabic{section}}

\newtheorem{theorem}{Theorem}[]
\newtheorem{lemma}{Lemma}[section]
\newtheorem{prop}{Proposition}[section]
\newtheorem{defi}{Definition}[section]

\newtheorem{corollary}{Corollary}[section]
\theoremstyle{remark}
\newtheorem{remark}{Remark}[section]

\setlength{\oddsidemargin}{-.1truein}
\setlength{\evensidemargin}{-.1truein} \setlength{\topmargin}{.20in}
\textwidth 6.50in \textheight 8.4in

\renewcommand{\theequation}{\thesection .\arabic{equation}}
\let\sect\section
\renewcommand\section{\setcounter{equation}{0}
\gdef\theequation{\thesection .\arabic{equation}}\sect}

%%%% DEFINITIONS %%%%
\makeatletter

\newcommand{\Rmnum}[1]{\expandafter\@slowromancap\romannumeral #1@}
\makeatother

\newcommand{\cB}{{\mathcal{B}}}
\newcommand{\cD}{{\mathcal{D}}}
\newcommand{\cE}{{\mathcal{E}}}

\newcommand{\cN}{{\mathcal{N}}}

\newcommand{\cG}{{\mathcal{G}}}

\newcommand{\cS}{{\mathcal{S}}}

\newcommand{\cP}{{\mathcal{P}}}
\newcommand{\cQ}{{\mathcal{Q}}}

\newcommand{\C}{{\mathbb{C}}}
\newcommand{\IC}{{\mathbb{C}}}

\newcommand{\R}{{\mathbb{R}}}
\newcommand{\IR}{{\mathbb{R}}}
\newcommand{\TT}{{\mathbb{T}}}

\newcommand{\tor}{\TT}

\newcommand{\be}{\begin{eqnarray}}
\newcommand{\ee}{\end{eqnarray}}

\newcommand{\diam}{\mathop{\rm{diam}}}

\newcommand{\dist}{\mathop{\rm{dist}}}

\newcommand{\mes}{\mathop{\rm{mes}\, }}

\newcommand{\car}{\mathop{\rm{Car}}\nolimits}

\newcommand{\spec}{\mathop{\rm{spec}}}

\def\beeq{\begin{equation}}
\def\eneq{\end{equation}}

\def\bm{\begin{matrix}}
\def\endm{\end{matrix}}
\def\cS{{\mathcal S}}

\def\Im{{\rm Im}}

\newcommand{\mbt}{\mathbb{T}}

\begin{document}

%\tableofcontents
\title[ homogeneous spectrum of Gevrey Schr\"odinger   operator ]{ Homogeneous Spectrum of  Quasi-periodic Gevrey Schr\"odinger  Operators with  Diophantine  Frequency}

\author{Yan Yang}
	\address{College of Sciences, Hohai University, 1 Xikang Road Nanjing Jiangsu 210098 P.R.China}
	\email{yanyang$\underline{~}$lgz@163.com}

\author{ Kai Tao$^*$}
	\address{College of Sciences, Hohai University, 1 Xikang Road Nanjing Jiangsu 210098 P.R.China}
	\email{ktao@hhu.edu.cn,\ tao.nju@gmail.com}

\thanks{2020 Mathematics Subject Classification. 37A30, 47A35, 37C55}

 \keywords{Homogeneous Spectrum; Gevrey Potential; Diophantine Frequency;  Quasi-periodic  Schr\"odinger Operator; Large Coupling Number.}

\thanks{$^*$ Corresponding author}

\date{}

\begin{abstract}
We consider the    quasi-periodic  Schr\"odinger operator  with the non-degenerate Gevrey potential for the  Diophantine frequency. We prove that if the coupling number of the potential  is large, then the spectrum  is
homogeneous.
\end{abstract}

\maketitle

\section{Introduction}

In this paper, we  study the following quasi-periodic  Schr\"odinger operators $H(x,\omega)$ on
$l^2(\mathbb{Z})$:
\begin{equation}\label{jacobiequ}
\left(H(x,\omega)\phi\right)(n)=\phi(n+1)+\phi(n-1)+V(x+n\omega)\phi(n)
,\ n\in\mathbb{Z},\end{equation}
where $V:\mathbb{T}\to\mathbb{R}$ is a real function called potential,  $\omega$ is an irrational number called frequency and $x\in \TT$ is called phase.

Due to the rich backgrounds in quantum physics, this operator and its special example, Almost Mathieu operator(AMO for short),
\begin{equation}\label{amo}
\left(M(x,\omega)\phi\right)(n)=\phi(n+1)+\phi(n-1)+2\lambda \cos\left(2\pi (x+n\omega)\right)\phi(n)
,\end{equation}
have been extensively studied, especially  people found that one can use ideas from the
dynamical systems (mainly linear cocycles) to study them in 2000's(such as \cite{J99,BG00,GS01}). In this field, the most famous question is  the Ten Martini Problem, which was dubbed by Barry Simon and conjectures that for
any irrational $\omega$, the spectrum of AMO (\ref{amo}) is a Cantor set. It was completely solved by Avila and Jitomirskaya in \cite{AJ09}.  For the Schr\"odinger operators (\ref{jacobiequ}), Goldstein and Schlag
\cite{GS11} also obtained this Cantor spectrum, when the potential $V$ is analytic and the Lyapunov exponent $L(E,\omega)$, which will be defined later, is positive. We denote these spectrums by $\mathcal{S}_\omega$, since they don't depend on $x$ for any irrational $\omega$.  These two results show that $\cS_\omega$ has the isolation: for any $E\in \cS_\omega$, there exists a deleted neighborhood $\cN(E)$ such that $\cN(E)\bigcap \cS_\omega=\emptyset$.

On the other hand,  Goldstein-Damanik-Schlag-Voda \cite{GDSV} proved that $\cS_\omega$ also  has a certain kind of continuity in measure: they considered the strong Diophantine frequency(SDC for short), i.e. for some $\alpha>1$,
\begin{equation}\label{sdc}
\|n\omega\| \geq \frac{c_{\omega}}{n(\log |n|+1)^\alpha}\ \ \mbox{for
all}\ n\not=0,\end{equation}
and then showed that in the supercritical region, which means $L(E,\omega)>0$, $\mathcal{S}_\omega$ is homogeneous. Here,  a closed set $\mathcal{S}\subset\mathbb{R}$ is called homogeneous if there is $\tau > 0$ such that for any $E\in \mathcal{S}$ and  $0<\sigma\le \diam(\cS)$,
\begin{equation} \label{1homogeneous}
|\mathcal{S}\cap(E-\sigma,E+\sigma)| > \tau\sigma.
\end{equation}

The homogeneity of the spectrum also plays an essential role in the inverse spectral
theory of almost periodic potentials(refer to the fundamental work of Sodin-
Yuditskii \cite{SY95,SY97}). It was shown
that the homogeneity of the spectrum implies the almost periodicity of the
associated potentials. What's more, it
is deeply related to Deift's conjecture \cite{BDGL15,DGL17}, who asked
whether the solutions of the KdV equation are almost periodic if the
initial data is almost periodic. So after the work \cite{GDSV} was submitted on arXiv in 2015, the research of the homogeneous spectrum  becomes a hot spot in our field.

In \cite{LYZZ17}, Leguil-You-Zhao-Zhou proved that for a measure-theoretically typical analytic potential, $\cS_{\omega}$ is homogeneous with the SDC frequency.
For the AMO, they further proved that if $\beta(\omega)=0$ and $\lambda\not=1$, then $\cS_{\omega}$
is also homogeneous. Note that for any irrational $\omega$, there exist its continued fraction approximants $\left\{\frac{p_s}{q_s}\right\}_{s=1}^{\infty}$,  satisfying
\begin{equation}\label{irtor0}
 \frac{1}{q_s(q_{s+1}+q_s)}<\left|\omega-\frac{ p_s}{q_s}\right|<\frac{1}{q_sq_{s+1}}.
\end{equation}Then,  $\beta(\omega)$ is defined as the exponential growth exponent of $\{\frac{p_s}{q_s}\}_{s=1}^{\infty}$ as follows:
\[
 \beta(\omega):=\limsup_s \frac{\log q_{s+1}}{q_s}\in[0,\infty].
\]
Obviously, the  strong Diophantine number is  a subset of the set $\{\omega:\beta(\omega)=0\}$.  Lately, Liu-Shi \cite{LS19} extended it to the finite Liouville frequency $0\le \beta(\omega)<\infty$, but the coupling number $\lambda$ of the potential $V=\lambda V_0$
needs to be very small. Due to Avila's  global theory of one-frequency cocycles, Liu-Shi's operator is in the subcritical regime and has purely absolutely continuous spectrum. In the meantime, Shi with Yuan and Jian \cite{JS19, SY19} obtained the similar results for the following extended Harper's model
\begin{equation}\label{jacobiequ1}
\left(H(x,\omega)\phi\right)(n)=-a\left(x+(n+1)\omega\right)\phi(n+1)-\overline{a(x+n\omega)}\phi(n-1)+V(x+n\omega)\phi(n)
,\ n\in\mathbb{Z},\end{equation}
where
\[a(x)=\lambda_1\exp\left(2\pi i\left(x+\frac{\omega}{2}\right)\right)+\lambda_2+\lambda_3\exp\left(-2\pi i\left(x+\frac{\omega}{2}\right)\right), V(x)=\cos 2\pi x.\]
Recently, Xu-Zhao \cite{XZ20} gave another homogeneity result for the  non-critical extended Harper's with the Diophantine frequency $\cD_{c,A}$, i.e. for some $A>1$,
 \begin{equation}\label{dc}
\|n\omega\| \geq \frac{c}{|n|^{A}}\ \ \mbox{for
all}\ n\not=0.\end{equation}

After reviewing these results, we will find that except the first one \cite{GDSV}, the others are all about the operators who are in the subcritical regime or whose potential is the cosine function.
There is also another observation  that they all need the potential to be  analytic.  Cai-Wang \cite{CW21} announced a new breakthrough lately that for any $d-$dimension Diophantine frequency $\mathcal{D}_{d,c,A}$, i.e. for some $A>d$,
 \begin{equation}\label{ddc}
\|n\omega\| \geq \frac{c}{|n|^{A}}\ \ \mbox{for
all}\ n\not=0, \end{equation} there exists a $k_0=D_0A$, where $D_0$ is a numerical constant,   such that for any potential  $V\in C^k(\TT^d,\mathbb{R})$ with $k\ge k_0$,
the spectrum $\cS_{\omega}$ is homogeneous if $\|V\|_k\le \epsilon_0$, where $\epsilon_0=\epsilon_0(c,A,k,d)$. Note that  this operator is always having purely
absolutely continuous spectrum and its Lyapunov exponent is always zero. This is a traditional territory of reducibility in methodology, which can help people seek out the edge points of the spectral gaps,
where the cocycles are reducible to constant parabolic cocycles. Unfortunately, it can not work in the supercritical regime.

In this paper, we  consider the Schr\"odinger operators (\ref{jacobiequ}) with some non-analytic potential in the supercritical regime. We say a $C^{\infty}$ function $V$ is a Gevrey class $\mathcal{G}^s(\mathbb{T})$ for some $s>1$, if
\begin{equation}
  \label{div-con}
  \sup_{x\in\mbt}|\partial^m V(x)|\le K^m\left(m!\right)^s\ \ \ \forall m\ge 0
\end{equation}
for some constant $K\ge 0$.  Let
$$
\|V\|_{s,K}:=\frac{1}{3}\sup\limits_{m}\frac{(1+|m|)^2}{K^m(m!)^s}\|\partial^m V\|_{C^0(\mathbb{T})},
$$
$G^{s,K}(\mathbb{S}^1)=\{V\in C^\infty(\mathbb{T},\R)\||V\|_{s,K}<\infty\}$ and $G^s(\mathbb{T})=\cup_{K>0}G^{s,K}(\mathbb{T})$. Obviously, $G^1(\mathbb{T})$  is the space of analytic functions and for any $s\ge 1$, $G^s(\mathbb{T})$ is a subspace of the space of smooth functions.
\begin{theorem}\label{main-thm}
Consider the  quasi-periodic Schr\"odinger operators (\ref{jacobiequ}). Let $\omega\in\cD_{c,A}$, $V=\lambda v$ and $v\in \cG^{s,K}$ be a non-degenerate Gevrey function, which means it satisfies  the following called non-degeneracy condition
\begin{equation}
  \label{non-deg}
  \forall x\in \mathbb{T},\  \exists m,\ s.t.\  v^{(m)}(x)\not=0.
\end{equation}Then, there exists a constant $\lambda_0(v,c,A,s,K)$ such that for any $\lambda>\lambda_0$, the spectrum $\cS_\omega$ is homogeneous.
\end{theorem}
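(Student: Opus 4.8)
The plan is to follow, in the Gevrey category and for the full Diophantine class $\cD_{c,A}$, the gap--counting strategy behind the homogeneity result of Goldstein--Damanik--Schlag--Voda \cite{GDSV}: instead of estimating $\cS_\omega$ directly, bound from above the total length of the bounded components of $\R\setminus\cS_\omega$ meeting a small interval $(E_0-\sigma,E_0+\sigma)$ about a point $E_0\in\cS_\omega$, and show it is $o(\sigma)$; homogeneity with $\tau=\tfrac12$ then follows immediately. The entry point is that $\lambda$ is large: by the Gevrey version of Herman's subharmonicity bound --- truncate $v$ to a trigonometric polynomial of degree $d\sim(\log\lambda)^{s}$, apply Herman's estimate, and absorb the $e^{-c\,d^{1/s}}$ truncation error --- one has $L(E,\omega)\gtrsim\log\lambda>0$ uniformly for $E$ in a neighbourhood of $\cS_\omega$, so the whole spectrum is supercritical and all the positive--Lyapunov--exponent machinery of the earlier sections applies: the sharp large deviation theorem for $\tfrac1N\log\|M_N(x,\omega,E)\|$ at sub--exponential rate $\exp(-cN^{\varrho})$, $\varrho\in(0,1)$, the avalanche principle, the convergence $L_N\to L$, and the regularity of the integrated density of states $\mathcal N$.

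I will need two further quantitative inputs, both of which I expect the earlier sections to supply. First, \emph{gap labelling} (Johnson--Moser): each bounded component $G_k=(a_k,b_k)$ of $\R\setminus\cS_\omega$ carries an integer label $k$ with $\mathcal N(a_k)=\mathcal N(b_k)\equiv k\omega\pmod{\ZZ}$, and distinct gaps carry distinct labels. Second, an \emph{exponential--type gap estimate} $|G_k|\le C\exp(-c|k|^{\theta})$ for some $\theta>0$; in the analytic case this is the bound $\exp(-c|k|)$, and the weakening to $\exp(-c|k|^{\theta})$ is precisely the price of Gevrey rather than analytic regularity. Its proof in the large--$\lambda$ regime rests on the uniform hyperbolicity of the cocycle over each gap together with a reducibility/avalanche estimate, and here the non--degeneracy of $v$ is used essentially: a Lojasiewicz inequality for $v$ (non--flatness at every point plus compactness of $\TT$) controls the measure of the near--resonant phases and keeps the cocycle quantitatively away from the parabolic ones on a large set. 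I will also use that the sub--exponential LDT forces the modulus of continuity of $\mathcal N$ to be of the form $\varpi(t)=\exp(-c(\log\tfrac1t)^{\delta_0})$ for some $\delta_0>0$ --- strictly better than log--H\"older --- since this sharper modulus is what makes the pairing ``$\omega\in\cD_{c,A}$ plus sub--exponential gaps'' effective in the count.

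Granting these, the count is short. Fix $E_0\in\cS_\omega$ and $0<\sigma\le\diam(\cS_\omega)$; it suffices to treat $\sigma$ small, since for $\sigma\ge\sigma_0$ a compactness argument --- continuity and, by the small--scale bound at $\sigma_0$, strict positivity of $(E_0,\sigma)\mapsto|\cS_\omega\cap(E_0-\sigma,E_0+\sigma)|$ on the compact set $\cS_\omega\times[\sigma_0,\diam(\cS_\omega)]$ --- gives a uniform bound. Put $I=(E_0-\sigma,E_0+\sigma)$. Any gap $G_k$ meeting $I$ has $\mathcal N(G_k)\subset\mathcal N(I)$, an interval of length $\le\varpi(\sigma)$ containing $\mathcal N(E_0)$, so $\|k\omega-\mathcal N(E_0)\|\le\varpi(\sigma)$; since $\omega\in\cD_{c,A}$ (i.e.\ $\|n\omega\|\ge c|n|^{-A}$), at most one label $k_*$ with such a gap can have $|k_*|\le c'\varpi(\sigma)^{-1/A}=c'\exp\bigl(\tfrac{c}{A}(\log\tfrac1\sigma)^{\delta_0}\bigr)$ --- two would force $0<\|(k-k')\omega\|\le2\varpi(\sigma)$ with $|k-k'|$ below the Diophantine threshold --- and a ``large'' gap $G_{k_*}$ with $|G_{k_*}|\gtrsim\sigma$ forces $E_0$ to be a band edge of $G_{k_*}$, which by perfectness of $\cS_\omega$ can occur on at most one side of $E_0$. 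Every other gap meeting $I$ has $|k|\ge c'\varpi(\sigma)^{-1/A}$, whence
\[
\sum\nolimits_{G_k\cap I\ne\emptyset,\ k\ne k_*}|G_k|\ \le\ C\sum\nolimits_{|k|\ge c'\varpi(\sigma)^{-1/A}}\exp(-c|k|^{\theta})\ \le\ C\exp\!\Bigl(-c\exp\bigl(\tfrac{c\theta}{A}(\log\tfrac1\sigma)^{\delta_0}\bigr)\Bigr),
\]
smaller than any power of $\sigma$. Hence $|I\setminus\cS_\omega|\le\min(|G_{k_*}|,2\sigma)+o(\sigma)\le\sigma+o(\sigma)$, so $|\cS_\omega\cap I|=|I|-|I\setminus\cS_\omega|\ge2\sigma-\sigma-o(\sigma)\ge\tfrac12\sigma$ for $\sigma$ small, which is the desired homogeneity with $\tau=\tfrac12$.

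The main obstacle is the production of the two quantitative inputs in the Gevrey setting --- with exponents that only need to be positive, nothing sharper: the sub--exponential gap estimate $|G_k|\le C\exp(-c|k|^{\theta})$ and the IDS modulus $\varpi(t)=\exp(-c(\log\tfrac1t)^{\delta_0})$. Both are driven by the sharp large deviation theorem for the Gevrey cocycle at rate $\exp(-cN^{\varrho})$, whose proof --- approximating $v$ by trigonometric polynomials on scales where the loss is only of $(\log N)$--type, with Lojasiewicz/complexity bounds for the non--degenerate $v$ playing the role the semialgebraic geometry plays in the analytic arguments --- is the genuine technical core of the earlier sections. I emphasise that a merely log--H\"older $\mathcal N$ would collapse the double exponential above to $\exp(-c(\log\tfrac1\sigma)^{\delta_0\theta/A})\gg\sigma$ and break the count: it is precisely the slightly--better--than--log--H\"older modulus extracted from the sub--exponential rate, together with the non--degeneracy hypothesis furnishing the transversality needed both for that LDT and for the gap estimate, that carries the argument through.
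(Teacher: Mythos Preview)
Your proposal rests on two inputs you ``expect the earlier sections to supply'': (i) a sub-log-H\"older modulus for the IDS, and (ii) a sub-exponential gap-length estimate $|G_k|\le C\exp(-c|k|^{\theta})$ indexed by the Johnson--Moser label. Input (i) is indeed available (via Thouless and the weak-H\"older continuity of $L$ noted in Remark~\ref{AL}). Input (ii) is \emph{not}: nowhere in the paper is such an estimate proved, and in the supercritical regime it is not a consequence of the LDT/Wegner machinery developed in Sections~3--4. Gap-length decay indexed by label is the characteristic output of KAM/reducibility arguments in the \emph{subcritical} regime (this is exactly how \cite{LYZZ17,CW21,LS19} proceed); in the positive-Lyapunov-exponent setting the analogous control of gap lengths requires a separate and delicate resonance/double-resonance analysis of the type carried out in \cite{GS11}, which is a theorem of at least comparable depth to the one you are proving and is not among the paper's preparatory results. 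Assuming it here is the genuine gap in your plan.

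There is also a misattribution that matters: the strategy of \cite{GDSV} is \emph{not} gap counting. Their argument---and this paper's, which adapts it to Gevrey potentials and to the full Diophantine class---is constructive. For each $E_0\in\cS_\omega$ and each large scale $n$ one \emph{produces} a finite-volume spectral segment $E_{j_0}^{[-n,n]}(I)$ of measure $\ge\exp\bigl(-(\log n)^{32A/\nu^{2}}\bigr)$ lying within $\exp(-n^{\nu/4})$ of $E_0$ (Lemma~\ref{initial-spectral-segment}, whose localization step uses the covering-form LDT and Wegner's estimate together with the Diophantine condition to find good scales at the edges of $[-n,n]$). One then shows, by an inductive stabilization in scale (Lemmas~\ref{stabilization-approx-ef}--\ref{finite-segment-to-full-spectrum-Bourgain}, controlling the combinatorics via the semialgebraic-set count), that all but $\exp(-cn^{\nu})$ of that segment survives into $\cS_\omega$. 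Homogeneity at scale $\sigma\sim\exp\bigl(-(\log n)^{32A/\nu^{2}}\bigr)$ follows at once. Neither gap labelling nor any a~priori gap-decay estimate enters; the argument builds spectrum directly rather than bounding its complement.
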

\begin{remark}
  So, the spectrum $\cS_\omega$ always has the positive measure in this condition.
\end{remark}
\begin{remark}
 It is obvious that all analytic functions on $\TT$ are non-degenerate. This non-degeneracy condition was first introduced by Klein \cite{K05} for the Gevrey potential. With its help, he proved the Anderson Localization,  the positivity and
 weak H\"older continuity of the Lyapunov exponent for the SDC frequency (\ref{sdc}). In our paper, we can extend them to  the Diophantine one (\ref{dc}) easily (see Remark \ref{AL}).
\end{remark}

As a contrast, it is not an easily thing to study the spectrum of the quasi-periodic  Schr\"odinger operators for non-SDC frequency in the supercritical regime. Actually, Goldstein-Schlag-Voda \cite{GSV16} attempted to proved the homogeneous spectrum of the multi-frequency  Schr\"odinger operators with more generic frequency, but they only succeed on a non full-measured subset of $\mathcal{D}_{d,c,A}$ (\ref{ddc}).
What's more, Avila-Last-Shamis-Zhou \cite{ALSZ} showed that even for the AMO (\ref{amo}), the spectrum
is not homogeneous if $\exp\left(-\frac{2}{3}\beta(\omega)\right)<\lambda<\exp\left(\frac{2}{3}\beta(\omega)\right)$.

The main reason for this situation is that it is very hard to obtain the good enough properties of the finite-volume determinant $f_n(x,E,\omega)$ with more generic frequency. For example, we need  it  to be not too small, since it is the denominator of the Green function. If the potential is the cosine,  it can be handled explicitly via the Lagrange interpolation for the trigonometric polynomial. But for general potential $V$, it is hard to judge  when it vanishes or not. In \cite{GS08},  Goldstein and Schlag used the subharmonicity, which comes from the analyticity of the potential, and Hilbert transform to obtain the BMO norm of $f_n$, and then applied the John-Nirenberg inequality to yield the following called large deviation theorem(LDT for short) with the SDC $\omega$:
\begin{equation}\label{ldtings08}
 \mes\left\{ x\in\mathbb{T}:\,\left|\log\left|f_{n}\left(x\right)\right|-\left\langle \log\left|f_{n}\right|\right\rangle \right|>n\delta\right\} \le C\exp\left(-c\delta \left(\delta_{SDC}^{(n)}\right)^{-1}\right),
\end{equation}
where $\delta_{SDC}^{(n)}=\frac{C_V(\log n)^{\alpha+2}}{n}$.  We call $\delta_{SDC}^{(n)}$ the smallest deviation, since (\ref{ldtings08}) works only when $\delta\gg \delta_{SDC}^{(n)}$. It is determined mainly by the holomorphic width of the analytic potential and the frequency, and the latter plays a central role.  Generally speaking, it will be larger when the irrational  frequency is ``more rational".  In \cite{GT20}, we applied their methods and obtain the corresponding LDTs for the Brjuno-R\"ussmann  frequency,  which is a famous extension of the SDC number. However, there exist some difficulties to apply it in this paper. The first one is that our potential is not analytic and then $\log\left|f_{n}\left(x\right)\right|$ is no longer subharmonic. The second one is that $\left\langle \log\left|f_{n}\right|\right\rangle$ may be nonexistent. What's more, even though we ignored the difference in potential, that is, we assumed the potential is analytic,  and then obtained the LDT for the Diophantine frequency $\cD_{c,A}$ in this paper,  we will find that the smallest deviation $\delta_{DC}^{(n)}$  is too large for the original methods in \cite{GDSV}.

So now, let's  make an introduction of  this paper and explain briefly  how to solve these problems. Section 2 is the preliminaries, including Lyapunov exponent,  Gevrey function, Cartan's estimate and semialgebraic set theory. Especially in subsection 2.2, we will show that the Gevrey cocycles have an analytic truncation, which will allow us to use some lemmas, which were for the analytic function or subharmonic one. In Section 3, we construct an  induction to get the LDTs.  Specifically, we  lead into  the {\L}ojasiewicz-type inequality from the non-degeneracy condition (\ref{non-deg}) to obtain the initial step, and apply the Cartan's estimate, instead of the BMO norm and the John-Nirenberg inequality, to yield the induction step. Then in the LDTs,  $\left\langle\log\left|f_{n}\left(x\right)\right|\right\rangle$ can be replaced by $L_n$ , which is very close to $\log\lambda$. As mentioned above, this LDT is not good enough for the original methods. So in Section 4, we  optimize them and lead into the  covering form of LDT to get the desired Wegner's estimate, which estimates the probability that there exists an eigenvalue in some interval $(E-\epsilon, E+\epsilon)$. At last, after the further optimization of  the way to produce spectral segments, which are closed to $\cS_{\omega}$, of considerable size, we  obtain the stability of spectrum, and then  prove the homogeneity.

\section{Preliminaries }
\subsection{Lyapunov exponent}
 It is obvious that the characteristic equation  $H(x,\omega)\phi=E\phi$ can be expressed as
\begin{equation}
\left (\begin{array}{cc}
  \phi(n+1) \\ \phi(n) \\
\end{array} \right )=\left ( \begin{array}{cc}
  V(x+n\omega)-E & -1 \\
1& 0 \\
  \end{array}\right )\left (\begin{array}{cc}
  \phi(n) \\ \phi(n-1) \\
\end{array} \right ).
\end{equation}
 Define
\begin{equation}\label{eq:1.3}
  M(x,E,\omega):=\left ( \begin{array}{cc}
  V(x)-E & -1 \\
 1& 0 \\
  \end{array}\right )
\end{equation}
 and the n-step transfer matrix
 $$M_n(x,E,\omega):=\prod_{k=n}^1M(x+k\omega,E,\omega).$$
 By the Kingman's subadditive ergodic theorem, the Lyapunov exponent
\begin{equation}\label{le}
  L(E,\omega)=\lim\limits_{n\to\infty}L_n(E,\omega)=\inf\limits_{n\to\infty}L_n(E,\omega)\ge 0
\end{equation} always exists, where
\begin{equation}\label{lne}
L_n(E,\omega)=\frac{1}{n}\int_\TT\log\|M_n(x,E,\omega)\|dx.
\end{equation}

\subsection{Gevrey function and analytic truncation} From now on, we always assume $V=\lambda v$, where $v\in\cG^{s,K}$. It is showed  that the condition (\ref{div-con}) is equivalent to the following weak exponential decay of the Fourier coefficients of $v$:
\begin{equation}\label{fou-v}
  \left|\hat v(k)\right|\le \|v\|_{s,K}\exp\left(-\rho |k|^{\frac{1}{s}}\right),\ \ \ \forall k\in\mathbb{Z},
\end{equation}
where $\rho=\frac{1}{K}$.
For every positive integer $n$, consider the truncation of the Gevrey function $v$:
\[
  v_n(x):=\sum_{|k|\leq \bar n}\hat v(k)e^{ikx},
\]
where $\bar n=\deg v_n$ will be determined later. Note that $v_n(x)$ is an analytic function on $\mathbb{T}$ and $v_n(z)$ is a holomorphic on the strip $\TT_n:=\{z:|\Im z|\le \rho_n\}$, where
\[\rho_n=\frac{\rho}{2}\bar n^{\frac{1}{s}-1}.\]Indeed,
if $z=x+iy$ with $|y|<\rho_n$, then
\[
  |v_n(z)|\le\sum_{|k|\le \bar n}\left|\hat v(k)\right|e^{|k||y|}\le  \|v\|_{s,K}\sum_{k=0}^{\bar n}\exp\left(-\rho |k|^{\frac{1}{s}}\right)e^{k|y|}
  < \|v\|_{s,K}\sum_{k=0}^{\bar n}\exp\left(-\frac{\rho}{2} |k|^{\frac{1}{s}}\right)<C.
\]
It is also obvious that for any $n$,
\begin{equation}
  \label{dis-g-gn1}
  |v(x)-v_n(x)|\le  \|v\|_{s,K}\exp\left(-\frac{\rho}{2}\bar n^{\frac{1}{s}}\right).
\end{equation}We choose $\bar n=n^{2s}$ to make the error in (\ref{dis-g-gn1})  super exponentially small:
\begin{equation}
  \label{dis-g-gn}
  |v(x)-v_n(x)|\le  \|v\|_{s,K}\exp\left(-\frac{\rho}{2}n^2\right).
\end{equation}
Then, the width of holomorphicity of $v_n$ becomes:
\[
  \rho_n=\frac{\rho}{2}n^{-2(s-1)}.
\]

Define the  analytic matrix related to $M_n$  by replacing $v$ by $v_n$:
\begin{equation}\label{def-mnt}
 M^t_n(x,E,\omega)=\prod_{j=n}^1\left ( \begin{array}{cc}
 E-\lambda v_n(x+j\omega) &-1\\
 1& 0 \\
  \end{array}\right ),
\end{equation}
For fixed $E$ and $\omega$,  $M^t_n(x,E,\omega)$ has its complex analytic extension  $M^t_n(z,E,\omega)$ on the strip $\TT_n$. In this paper, we always assume  $E \in [-\lambda \|v\|_\infty-2,\lambda\|v\|_\infty+2]$, or our question will be trivial.
Now,
\[
1\leq u_n(x,E,\omega),u_n^t(x,E,\omega)\leq \log(2\lambda \|v\|_\infty+4).
\]
Therefore ,  there exists a constant $\lambda'_0(v)$ such that if $\lambda > \lambda'_0(v)$ , then for any $n\ge 1$,
\begin{equation}\label{def-S}
 u_n(x,E,\omega),u_n^t(x,E,\omega)\leq S(\lambda)=\log\lambda+(\log\lambda)^{\frac{1}{2}}.
\end{equation}
Let $H_{[a,b]}(x,\omega)$ be the restriction of $H(x,\omega)$ to
the interval $[a,b]$ with Dirichlet boundary conditions, $\phi(a-1)=\phi(b+1)=0$,  and we
denote the corresponding Dirichlet determinant by $ f_{[a,b]}(x,E,\omega):= \det
(H_{[a,b]}(x,\omega)-E) $. We use $ E^{[a,b]}_j(x,\omega) $, $
\psi_j^{[a,b]}(x,\omega) $ to denote the eigenpairs of $
H_{[a,b]}(x,\omega) $, with $ \psi_j^{[a,b]}(x,\omega) $ being $
\ell^2 $-normalized. One has
\[f_{[a,b]}(x,E,\omega)=f_{b-a+1}\left(x+(a-1)\omega,E,\omega\right),\]
where $f_n:=f_{[1,n]}$. Simple computations yield that
\begin{equation}\label{Mnaxdet}
 M_n(x,E,\omega)=\left ( \begin{array}{cc}
  f_n(x,E,\omega) & -f_{n-1}(x+\omega,E,\omega) \\
 f_{n-1}(x,E,\omega)& - f_{n-2}(x+\omega,E,\omega) \\
  \end{array}\right ),
\end{equation}
and similar communication also holds between $f_n^t$ and $M_n^t$, where $f^t_{n}(x,E,\omega):= \det
\left(H^t_{n}(x,\omega)-E\right)$ and $H^t_n(x,\omega)$ denotes the  restriction operators $H_n(x,\omega)$ with the potential $\lambda v_n$.

Due to the  subharmonicities of $u_n^t$ and $\log|f^t_{n}|$, we can use the following  propositions in our paper.
\begin{prop}[Theorem 1.4 in \cite{GT20}]\label{sbet}
 Let $u:\Omega\to \IR$ be a subharmonic function on
a domain $\Omega\subset\IC$ and $\omega\in \cD_{c,A}$. Suppose that $\partial \Omega$ consists
of finitely many piece-wise $C^1$ curves, $\TT_{\rho}\subset \Omega$ and $\sup_{z\in \TT_{\rho}}u(z)<S$. Then, there exist constants  $c=c(c,A)$ and $ C=C(c,A)$ such that for any positive $n$ and $\delta>\frac{CS}{\rho}n^{-\nu}$,
\begin{equation}
  \label{eq-sbet}
  \mes\left (\left \{x\in \mathbb{T}:\left|\sum_{k=1}^n u(x+k\omega)-n\langle u\rangle\right|>\delta n\right \}\right ) \leq  \exp\left(-\frac{c \rho \delta n}{S}\right),
\end{equation}
where $\nu=\frac{1}{2A}$.
\end{prop}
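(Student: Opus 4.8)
The plan is to carry out the Goldstein--Schlag large--deviation scheme for Birkhoff sums of subharmonic functions, replacing the strong Diophantine input of \cite{GS08,GS11} by the bare condition $\omega\in\cD_{c,A}$ and keeping track of the resulting dependence on $c,A$. Throughout put $F_n(x):=\sum_{k=1}^n u(x+k\omega)-n\langle u\rangle$; the goal is an estimate of the shape \eqref{eq-sbet}. The starting point is the Riesz representation: since $u$ is subharmonic on a domain containing $\TT_\rho$ with $\sup_{\TT_\rho}u<S$, on $\TT_{\rho/2}$ one may write $u=\langle u\rangle+h+w$, where $h$ is harmonic with $\|h\|_{L^\infty(\TT_{\rho/4})}\le CS$ and $w(x)=\int\log|2\sin\pi(x-\zeta)|\,d\mu(\zeta)$ is the logarithmic potential of the Riesz measure $\mu$ of $u$, of total mass $\|\mu\|\le CS/\rho$. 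Passing to Fourier coefficients on $\TT$ this yields the standard decay $|\hat u(m)|\le\frac{CS}{\rho|m|}$ for $m\ne0$ (see \cite{GS08}), with the harmonic part contributing the much smaller $|\hat h(m)|\le CSe^{-c\rho|m|}$. The Diophantine condition enters only through the elementary bound $\bigl|\sum_{k=1}^n e^{2\pi i mk\omega}\bigr|\le\min(n,\tfrac{1}{2\|m\omega\|})\le\min(n,\tfrac{|m|^A}{2c})$, so that $|\widehat{F_n}(m)|\le\frac{CS}{\rho|m|}\min(n,\tfrac{|m|^A}{2c})$.

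Next I would dispose of the harmonic part. Its contribution $\sum_{k=1}^n h(x+k\omega)-n\langle h\rangle$ has Fourier coefficients $\hat h(m)\sum_{k=1}^n e^{2\pi i mk\omega}$, hence $L^\infty$ norm at most $\sum_{m\ne0}CSe^{-c\rho|m|}\tfrac{|m|^A}{2c}\le\frac{C(A)S}{c\,\rho^{A+1}}$, a bound independent of $n$. Whenever $\delta n/2$ exceeds this quantity --- which is forced by $\delta>\frac{CS}{\rho}n^{-\nu}$ once $n^{1-\nu}$ is large --- the harmonic part makes no contribution to the left side of \eqref{eq-sbet}, and everything reduces to the singular part $\sum_{k=1}^n w(x+k\omega)$.

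For the singular part, interchanging the sum and the integral gives
\[
  \sum_{k=1}^n w(x+k\omega)=\int\log\Bigl|\prod_{k=1}^n 2\sin\pi(x+k\omega-\zeta)\Bigr|\,d\mu(\zeta),
\]
so that, modulo constants, $\bigl\|\sum_{k=1}^n w(\cdot+k\omega)\bigr\|_{\BMO(\TT)}\le\|\mu\|\cdot\sup_{\zeta}\bigl\|\log\bigl|\prod_{k=1}^n 2\sin\pi(\cdot+k\omega-\zeta)\bigr|\bigr\|_{\BMO(\TT)}$. The heart of the matter is to bound the last supremum: since for $\omega\in\cD_{c,A}$ the orbit $\{k\omega\}_{1\le k\le n}$ has minimal gap $\ge c/n^A$ and, more generally, controlled clustering at every dyadic scale (via $\|q_s\omega\|\gtrsim q_s^{-A}$ along the continued--fraction denominators), a counting--function argument bounds this BMO norm by $C(c,A)\,n^{1-\nu'}$ for an explicit $\nu'\in(0,1)$ depending only on $A$. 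Together with $\|\mu\|\le CS/\rho$ this gives $\|F_n-n\langle u\rangle\|_{\BMO(\TT)}\le\frac{C(c,A)S}{\rho}\,n^{1-\nu'}$ for $n$ large, and the John--Nirenberg inequality then converts it into
\[
  \mes\bigl\{x\in\TT:|F_n(x)-n\langle u\rangle|>\delta n\bigr\}\le C\exp\!\Bigl(-\frac{c\,\delta n}{\|F_n-n\langle u\rangle\|_{\BMO}}\Bigr)\le C\exp\!\Bigl(-\frac{c(c,A)\,\rho\,\delta\,n^{\nu'}}{S}\Bigr),
\]
which is a bound of the form \eqref{eq-sbet} in the regime $\delta>\frac{CS}{\rho}n^{-\nu}$, $\nu=\frac{1}{2A}$, where the right side is genuinely small.

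The step I expect to be the main obstacle is precisely this BMO estimate for $\log\bigl|\prod_{k=1}^n 2\sin\pi(\cdot+k\omega-\zeta)\bigr|$ under the bare Diophantine condition $\cD_{c,A}$: this is where the weakening from the strong Diophantine condition of \cite{GS08} is paid for, and the power of $n$ extracted there governs both the final exponent and the precise threshold $\delta>\frac{CS}{\rho}n^{-\nu}$ below which the scheme fails. All the other ingredients --- the Riesz representation and Fourier bookkeeping, the geometric--sum estimate from $\cD_{c,A}$, the crude $n$--independent $L^\infty$ bound for the harmonic part, and the John--Nirenberg conversion --- are routine once that estimate is in hand, and the same structure underlies Theorem~1.4 of \cite{GT20}.
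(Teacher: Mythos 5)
The paper itself offers no proof of Proposition \ref{sbet}: it is imported verbatim as Theorem~1.4 of \cite{GT20}. Judged against the statement, your scheme has a genuine gap, and it sits exactly where you stop, at the John--Nirenberg step. From a bound of the form $\bigl\|\sum_{k=1}^{n}u(\cdot+k\omega)-n\langle u\rangle\bigr\|_{\BMO(\TT)}\le \frac{CS}{\rho}n^{1-\nu'}$, John--Nirenberg yields only
$\mes\{|F_n|>\delta n\}\le C\exp\left(-c\rho\delta n^{\nu'}/S\right)$, i.e.\ $n^{\nu'}$ in the exponent, whereas the proposition asserts $\exp\left(-c\rho\delta n/S\right)$ with $n$ to the first power. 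This is not cosmetic: at the threshold $\delta\sim\frac{CS}{\rho}n^{-\nu}$ the stated bound is $\exp(-cn^{1-\nu})$ while yours degenerates to a constant, and the paper really does use the linear-in-$n$ form (in Lemma \ref{ind-step} the proposition is invoked with $\delta$ essentially at threshold and the conclusion ``up to a set of measure less than $2\exp(-cn^{1-\nu})$'' is what drives the induction). The reason John--Nirenberg suffices in \cite{GS08} is that for SDC frequencies $\|F_n\|_{\BMO}$ is $O\bigl(S(\log n)^{B}/\rho\bigr)$, essentially independent of $n$; for bare $\cD_{c,A}$ with $A>1$ it grows like $n^{1-\nu}$, and neither John--Nirenberg nor the naive Cartan estimate (which suffers the same $M-m\sim \frac{S}{\rho}n^{1-\nu}$ loss) can recover a linear exponent. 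The missing ingredient is the refinement by which \cite{GS08,GT20} restore linearity: the sublevel sets of $\sum_{k\le n}\log|2\sin\pi(\cdot+k\omega-\zeta)|$ are unions of roughly $n$ intervals attached to well-separated zeros (separation $\ge c\,n^{-A}$ from $\omega\in\cD_{c,A}$), so the refined one-variable Cartan estimate of Lemma \ref{lem:high_cart} --- keeping only the disks that contain zeros --- or the equivalent scale-splitting bootstrap applies. Without that, your argument proves a strictly weaker statement than the one quoted.

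A secondary but nontrivial point: the one estimate you yourself identify as ``the heart of the matter,'' namely the $\BMO$ bound $C(c,A)\,n^{1-\nu'}$ for $\log\bigl|\prod_{k=1}^{n}2\sin\pi(\cdot+k\omega-\zeta)\bigr|$ uniformly in $\zeta$, is asserted via an unspecified ``counting-function argument'' rather than derived; the exponent $\nu=\frac{1}{2A}$ in the proposition comes precisely from optimizing the dyadic splitting of $\sum_{m\ne0}\frac{S^2}{\rho^2m^2}\min\bigl(n,\frac{|m|^{A}}{2c}\bigr)^2$, and that computation (or its BMO analogue) needs to be written out. So even the weak form of the large deviation estimate is not fully established by the proposal, and the strong form stated in Proposition \ref{sbet} is out of reach of the route you describe.
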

\begin{lemma}[Lemma 2.2 in \cite{GS08}]
\label{lem:riesz} Let $u:\Omega\to \IR$ be a subharmonic function on
a domain $\Omega\subset\IC$. Suppose that $\partial \Omega$ consists
of finitely many piece-wise $C^1$ curves. There exists a positive
measure $\mu$ on~$\Omega$ such that for any $\Omega_1\Subset \Omega$
(i.e., $\Omega_1$ is a compactly contained subregion of~$\Omega$),
\begin{equation}
\label{eq:rieszrep} u(z) = \int_{\Omega_1}
\log|z-\zeta|\,d\mu(\zeta) + h(z),
\end{equation}
where $h$ is harmonic on~$\Omega_1$ and $\mu$ is unique with this
property. Moreover, $\mu$ and $h$ satisfy the bounds \be
\mu(\Omega_1) &\le& C(\Omega,\Omega_1)\,(\sup_{\Omega} u - \sup_{\Omega_1} u), \label{21002} \\
\|h-\sup_{\Omega_1}u\|_{L^\infty(\Omega_2)} &\le&
C(\Omega,\Omega_1,\Omega_2)\,(\sup_{\Omega} u - \sup_{\Omega_1} u)
\label{21003} \ee for any $\Omega_2\Subset\Omega_1$.
\end{lemma}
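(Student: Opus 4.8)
The plan is to read off the lemma from the classical Riesz decomposition of a subharmonic function, supplemented by two quantitative facts: a bound on the Riesz mass via the Green function of an auxiliary domain, and the standard interior $L^\infty$ estimate for harmonic functions fed by an $L^1$ bound. First I would take $\mu:=\frac{1}{2\pi}\Delta u$, the distributional Laplacian of $u$, which is a positive Radon measure on $\Omega$, finite on compact subsets since $u$ is subharmonic. For $\Omega_1\Subset\Omega$ set $p(z):=\int_{\Omega_1}\log|z-\zeta|\,d\mu(\zeta)$ and $h:=u-p$; since $\frac{1}{2\pi}\Delta p=\mu|_{\Omega_1}$, we get $\Delta h=0$ in the distributional sense on $\Omega_1$, so $h$ is harmonic there by Weyl's lemma, which is \eqref{eq:rieszrep}. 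Uniqueness is automatic: any competing decomposition $u=\tilde p+\tilde h$ of the same form on $\Omega_1$ forces $\tilde\mu=\frac{1}{2\pi}\Delta\tilde p=\frac{1}{2\pi}\Delta u|_{\Omega_1}=\mu|_{\Omega_1}$, hence $\tilde p=p$ and $\tilde h=h$.

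For the mass bound \eqref{21002} I would fix once and for all a connected intermediate domain $\Omega'$ with piecewise $C^1$ boundary and $\Omega_1\Subset\Omega'\Subset\Omega$ (possible since $\Omega$ is connected), let $g_{\Omega'}$ be its Green function, and let $U$ denote the Dirichlet solution in $\Omega'$ with boundary data $u|_{\partial\Omega'}$, that is, the least harmonic majorant of $u$ on $\Omega'$. The Riesz (Poisson--Jensen) representation on $\Omega'$ reads
\[
u(z)=U(z)-\int_{\Omega'}g_{\Omega'}(z,\zeta)\,d\mu(\zeta),\qquad z\in\Omega'.
\]
For each fixed $\zeta\in\overline{\Omega_1}$ the function $z\mapsto g_{\Omega'}(z,\zeta)$ is positive, superharmonic on $\Omega'$, and vanishes on $\partial\Omega'$, so by the minimum principle it stays bounded below on the compact set $\overline{\Omega_1}$, uniformly in $\zeta$ by compactness: $g_{\Omega'}(z,\zeta)\ge c_0(\Omega_1,\Omega')>0$ for $z,\zeta\in\overline{\Omega_1}$. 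Evaluating the representation at a point $z_0\in\Omega_1$ with $u(z_0)$ arbitrarily close to $\sup_{\Omega_1}u$, and using $U\le\sup_{\partial\Omega'}u\le\sup_\Omega u$, gives $c_0\,\mu(\Omega_1)\le\sup_\Omega u-\sup_{\Omega_1}u$, which is \eqref{21002} with $C(\Omega,\Omega_1)=c_0^{-1}$; the same argument with $\Omega'$ in place of $\Omega_1$ also yields $\mu(\Omega')\le C(\sup_\Omega u-\sup_{\Omega_1}u)$.

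The bound \eqref{21003} on $h$ is the step I expect to be the main obstacle: the hypothesis only says that the supremum of $u$ drops by $\delta:=\sup_\Omega u-\sup_{\Omega_1}u$ from $\Omega$ to $\Omega_1$, and this does not directly control $h$ from below on $\Omega_2$, since subharmonicity propagates one-sided information the wrong way and a direct Harnack argument fails (the infimum of the relevant nonnegative harmonic function need not be approached anywhere near $\Omega_2$). My plan is to fix $\Omega_2\Subset\Omega_2'\Subset\Omega_1$ and reduce the $L^\infty$ bound to the $L^1$ bound $\|h-\sup_{\Omega_1}u\|_{L^1(\Omega_2')}\le C\delta$ by the mean-value interior estimate applied to the harmonic function $h-\sup_{\Omega_1}u$. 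After normalizing so that $\diam\Omega_1<1$ (absorbing a geometric factor into the constants) one has $p\le0$ on $\Omega_1$ and, by Fubini together with the integrability of $\log$ in two dimensions, $\|p\|_{L^1(\Omega_1)}\le C(\Omega_1)\,\mu(\Omega_1)\le C\delta$; since $h-u=-p$ this disposes of the potential part, so it remains to show $\int_{\Omega_2'}(\sup_{\Omega_1}u-u)\le C\delta$, the integrand being nonnegative.

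For that last estimate I would write, on $\Omega'$, $\sup_{\Omega_1}u-u=(\sup_{\Omega_1}u-U)+\int_{\Omega'}g_{\Omega'}(\cdot,\zeta)\,d\mu(\zeta)$. The Green-potential term integrates over $\Omega_2'$ to at most $\mu(\Omega')\cdot\sup_{\zeta}\int_{\Omega_2'}g_{\Omega'}(z,\zeta)\,dA(z)\le C\,\mu(\Omega')\le C\delta$. For the first term the key observation is that $w:=\sup_\Omega u-U\ge0$ is harmonic on $\Omega'$ and, because $U\ge u$, it satisfies $w(z_1)\le 2\delta$ at any near-maximizer $z_1\in\Omega_1$ of $u$; since such a $z_1$ lies in the fixed compact set $\overline{\Omega_1}\Subset\Omega'$ and $\Omega'$ is connected, Harnack's inequality gives $w\le C(\Omega_1,\Omega')\,\delta$ on all of $\overline{\Omega_1}$, hence $|\sup_{\Omega_1}u-U|\le C\delta$ pointwise on $\Omega_2'$. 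Combining the two terms yields $\int_{\Omega_2'}(\sup_{\Omega_1}u-u)\le C\delta$, and the interior estimate then delivers \eqref{21003}.
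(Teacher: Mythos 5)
The paper does not prove this lemma at all; it imports it verbatim as Lemma 2.2 of \cite{GS08}, so there is no in-paper argument to compare against. Your reconstruction is correct and follows the standard route (Riesz measure plus Weyl's lemma for \eqref{eq:rieszrep}, the Poisson--Jensen representation on an intermediate domain with a lower bound on its Green function for \eqref{21002}, and Harnack plus the Green-potential $L^1$ bound and the interior mean-value estimate for \eqref{21003}), which is essentially the argument of the cited source.
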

\begin{lemma}[Lemma 2.3 in \cite{BGS01}]\label{bgsbmonorm}
  Suppose u is subharmonic on $\mathbb{T}_{\rho}$, with $\mu(\mathbb{T}_{\rho})+\sup_{z\in\mathbb{T}_{\rho}}h(z)\le S$ where $\mu(\mathbb{T}_{\rho})$ and $h(z)$ comes from Lemma \ref{lem:riesz}. Furthermore, assume that $u=u_0+u_1$, where
\begin{equation}
\|u_0-<u_0>\|_{L^{\infty}(\mathbb{T})}\leq \epsilon_0\ \ \mbox{and}\ \ \|u_1\|_{L^1(\mathbb{T})}\leq \epsilon_1.
\end{equation}
Then for some constant $C_{\rho}$ depending only on $\rho$,
\[\|u\|_{BMO(\mathbb{T})}\leq C_{\rho}\left (\epsilon_0\log \left(\frac{S}{\epsilon_1}\right )+\sqrt{S\epsilon_1}\right).\]
\end{lemma}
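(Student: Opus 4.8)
\textbf{Proof proposal for Lemma \ref{bgsbmonorm} (BMO bound for a subharmonic function with a perturbative splitting).}

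The plan is to estimate the oscillation of $u$ over an arbitrary sub-arc $I\subset\TT$ and show the resulting bound is controlled uniformly by $C_\rho\left(\epsilon_0\log(S/\epsilon_1)+\sqrt{S\epsilon_1}\right)$. First I would invoke Lemma \ref{lem:riesz} with $\Omega=\TT_\rho$ and a fixed intermediate strip $\TT_{\rho/2}\Subset\TT_\rho$ to write, on $\TT_{\rho/2}$,
\[
u(z)=\int_{\TT_{\rho/2}}\log|z-\zeta|\,d\mu(\zeta)+h(z)=:p(z)+h(z),
\]
where $h$ is harmonic and, by the hypothesis $\mu(\TT_\rho)+\sup_{\TT_\rho}h\le S$ (together with \eqref{21003}), we may assume $\|h\|_{L^\infty(\TT_{\rho/2})}\le CS$; in particular the harmonic part $h$ is Lipschitz on $\TT$ with constant $O_\rho(S)$ and hence contributes at most $O_\rho(S\cdot|I|)$ to the oscillation on $I$, which is absorbed into the $\sqrt{S\epsilon_1}$ term once we also use $|I|\le 1$ and a trivial bound — more precisely, the harmonic part's contribution to $\|u\|_{\mathrm{BMO}}$ is handled together with the potential part below. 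The point of the Riesz decomposition is that the ``bad'' (non-Lipschitz) part of $u$ is the single-layer potential $p(z)=\int\log|z-\zeta|\,d\mu(\zeta)$ with total mass $\mu(\TT_{\rho/2})\le CS$.

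Next I would bound the BMO norm of the potential part $p|_\TT$. The key classical fact is that for a positive measure $\mu$ of total mass $M$ supported in $\TT_{\rho/2}$, the logarithmic potential $p(x)=\int\log|x-\zeta|\,d\mu(\zeta)$ restricted to $\TT$ satisfies $\|p\|_{\mathrm{BMO}(\TT)}\le C_\rho M$: indeed for any arc $I$ of length $\delta$, splitting $\mu=\mu_{\mathrm{near}}+\mu_{\mathrm{far}}$ according to whether $\zeta$ is within distance $2\delta$ of $I$, the far part is Lipschitz on $I$ with constant $O(M/\delta)$ hence oscillates by $O(M)$, while the near part contributes $\int_I\big|p_{\mathrm{near}}(x)-\langle p_{\mathrm{near}}\rangle_I\big|\,dx\le \int_I\int \big|\log|x-\zeta|\big|\,d\mu_{\mathrm{near}}(\zeta)\,dx+\dots\le C\,\mu_{\mathrm{near}}(\TT_{\rho/2})\cdot\delta\cdot(1+\log(1/\delta))$, and after dividing by $|I|=\delta$ one still has to check the $\log(1/\delta)$ factor is controlled — this is where one must be a little careful and use the full strength of the splitting hypothesis rather than the crude mass bound. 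So this step alone gives only $\|u\|_{\mathrm{BMO}}\lesssim_\rho S\log(S/\epsilon_1)+\sqrt{S\epsilon_1}$-type control if one is cavalier; to land the sharp $\epsilon_0$ in front of the logarithm one cannot treat $u$ as a generic subharmonic function of bounded mass.

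Therefore the decisive step is to feed in the hypothesis $u=u_0+u_1$ with $\|u_0-\langle u_0\rangle\|_{L^\infty(\TT)}\le\epsilon_0$ and $\|u_1\|_{L^1(\TT)}\le\epsilon_1$. On any arc $I$ of length $\delta$ I would estimate
\[
\frac{1}{|I|}\int_I\big|u-\langle u\rangle_I\big|\,dx
\le \frac{1}{|I|}\int_I\big|u_0-\langle u_0\rangle\big|\,dx
 + \frac{1}{|I|}\int_I\big|u_1-\langle u_1\rangle_I\big|\,dx
\le 2\epsilon_0+\frac{2}{\delta}\|u_1\|_{L^1(I)},
\]
which is excellent when $\delta$ is not too small (say $\delta\ge\sqrt{\epsilon_1/S}$, giving $\le 2\epsilon_0+2\sqrt{S\epsilon_1}$ after Cauchy–Schwarz or a trivial bound on $\|u_1\|_{L^1(I)}\le\epsilon_1$). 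For small arcs $\delta<\sqrt{\epsilon_1/S}$ the $L^1$ splitting is useless, and here one must instead use subharmonicity of the whole function $u$ via its Riesz mass: on such tiny scales the oscillation of $u$ over $I$ is controlled by $\mu(\TT_\rho)\le S$ times a logarithmic factor $\log(1/\delta)\le\log(S/\epsilon_1)$, but crucially one only pays this at scales where $u$ genuinely behaves like $\log|x-\zeta|$ near a point mass, and on those scales the $L^\infty$-close harmonic-plus-$u_0$ part still contributes only $\epsilon_0$, while the remaining singular contribution is bounded by $\epsilon_0\log(S/\epsilon_1)$ after one tracks how the potential $p$ near its own support is dominated by the $u_0$-oscillation through the identity $p = u - h - u_0 - (u_1-\text{harmonic pieces})$ reorganized appropriately. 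Combining the two regimes and taking the supremum over all arcs $I$ yields
\[
\|u\|_{\mathrm{BMO}(\TT)}\le C_\rho\left(\epsilon_0\log\!\left(\frac{S}{\epsilon_1}\right)+\sqrt{S\epsilon_1}\right),
\]
as claimed. I expect the genuine obstacle to be exactly this small-arc regime: making rigorous the claim that at scales below $\sqrt{\epsilon_1/S}$ the singular part of $u$ costs at most $\epsilon_0\log(S/\epsilon_1)$ rather than $S\log(S/\epsilon_1)$, which forces one to track the Riesz mass and the $L^\infty$-splitting simultaneously rather than using either bound in isolation — this is the content of Lemma 2.3 in \cite{BGS01} and the reason the statement is a lemma and not a one-line consequence of the John–Nirenberg inequality.
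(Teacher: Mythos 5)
The paper does not actually prove this statement --- it is imported verbatim as Lemma 2.3 of \cite{BGS01} --- so your proposal can only be measured against the original argument there. Your overall architecture is the right one and matches its shape: Riesz decomposition $u=p+h$ via Lemma \ref{lem:riesz}, a near/far splitting of the logarithmic potential relative to an arc $I$, and a two-regime analysis that combines the $L^\infty$ information carried by $u_0$ with the $L^1$ information carried by $u_1$.

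Nevertheless there is a genuine gap exactly where you flag one, and flagging it is not the same as closing it. The decisive ingredient of the argument is a \emph{local Riesz-mass estimate}: for every $x_0\in\mathbb{T}$ and every admissible radius $r$ one has $\mu\bigl(\mathcal{D}(x_0,r)\bigr)\le C_\rho\bigl(\epsilon_0+\epsilon_1 r^{-1}\bigr)$ (up to harmless additive terms). This is obtained from Jensen's formula, i.e.\ the sub-mean-value inequality for $u$ combined with the representation \eqref{eq:rieszrep} on a disk of radius comparable to $r$ centered on the circle: the mass of $\mu$ inside the disk is dominated by the gap between the average of $u$ over a circle and its value near the center, and that gap is then estimated by writing $u=u_0+u_1$ on $\mathbb{T}$, the $u_0$ part contributing $O(\epsilon_0)$ and the $u_1$ part contributing $O(\epsilon_1/r)$ after averaging. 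This is the only bridge between the splitting hypothesis, which lives solely on the real circle, and the measure $\mu$, which lives in the strip; your proposed reorganization ``$p=u-h-u_0-(u_1-\text{harmonic pieces})$'' is not an identity (the pieces are not defined on the same sets) and does not supply that bridge. Once the local mass bound is available, the mean oscillation of $p$ over an arc of length $\delta$ is summed over dyadic annuli: each scale contributes $O(\epsilon_0)$ plus an $\epsilon_1$-weighted term, the number of relevant scales is $O(\log(S/\epsilon_1))$ because below the cutoff scale determined by $\epsilon_1$ and $S$ one reverts to the crude bound $\mu(\mathbb{T}_\rho)\le S$, and balancing the two regimes produces precisely $\epsilon_0\log(S/\epsilon_1)+\sqrt{S\epsilon_1}$. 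Without this estimate your small-arc regime collapses to the generic bound with coefficient $S$ in front of the logarithm --- which, as you yourself observe, is exactly the bound the lemma is designed to beat.
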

\begin{lemma}[John-Nirenberg inequality]\label{jn-ineq}
  Let $f$ be a function of bounded mean oscillation on $\mathbb{T}$. Then there exist the absolute constants $C$ and $c$ such that for any $\gamma>0$
  \begin{equation}\label{jn}
    meas\left\{x\in\mathbb{T}: | f(x)-<f>|>\gamma \right\}\leq C\exp \left (-\frac{c\gamma}{\| f\|_{BMO}}\right ).
  \end{equation}
\end{lemma}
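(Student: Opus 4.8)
The plan is to run the classical Calder\'on--Zygmund stopping-time argument, adapted to the circle $\TT=\R/\IZ$ (normalized so that $|\TT|=1$, and $<f>=\int_\TT f\,dx$). If $\|f\|_{BMO}=0$ then $f$ is constant and the statement is trivial, so assume $\|f\|_{BMO}>0$ and pass to $g:=(f-<f>)/\|f\|_{BMO}$, which satisfies $\|g\|_{BMO}=1$ and $<g>=0$; since $\{x:|f(x)-<f>|>\gamma\}=\{x:|g(x)|>\gamma/\|f\|_{BMO}\}$, it suffices to prove $\mes\{x\in\TT:|g(x)|>t\}\le C e^{-ct}$ with absolute $C,c$ and then substitute $t=\gamma/\|f\|_{BMO}$ at the very end. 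Identify $\TT$ with $I_0=[0,1)$ equipped with its standard dyadic filtration, so that $I_0$ is the root of the dyadic tree and every dyadic interval has its dyadic parent inside $I_0$; all ``intervals'' below are dyadic subintervals of $I_0$, and $\|g\|_{BMO}=1$ still controls $\frac{1}{|Q|}\int_Q|g-g_Q|\le1$ for every such $Q$ (dyadic intervals being a subfamily of all intervals), where $g_Q:=\frac{1}{|Q|}\int_Q g$.

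Set $\phi(t):=\sup_{Q}\frac{1}{|Q|}\mes\{x\in Q:|g(x)-g_Q|>t\}$, the supremum over all dyadic $Q\subseteq I_0$; clearly $\phi$ is non-increasing and $\phi(t)\le1$. The crux is the recursion
\[
  \phi(t+2\Lambda)\le\tfrac1\Lambda\,\phi(t),\qquad t\ge0,
\]
for a fixed large absolute constant $\Lambda$ (say $\Lambda=2e$). To establish it, fix a dyadic $Q_0$ and apply the stopping-time decomposition to $|g-g_{Q_0}|$ on $Q_0$ at height $\Lambda$: select the maximal dyadic $Q_j\subseteq Q_0$ with $\frac1{|Q_j|}\int_{Q_j}|g-g_{Q_0}|>\Lambda$. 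Since $\frac1{|Q_0|}\int_{Q_0}|g-g_{Q_0}|\le1<\Lambda$, each $Q_j$ is a proper subinterval whose dyadic parent does not trigger the condition, whence $\frac1{|Q_j|}\int_{Q_j}|g-g_{Q_0}|\le2\Lambda$ and so $|g_{Q_j}-g_{Q_0}|\le2\Lambda$; disjointness and Chebyshev give $\sum_j|Q_j|\le\frac1\Lambda\int_{Q_0}|g-g_{Q_0}|\le\frac1\Lambda|Q_0|$; and Lebesgue differentiation gives $|g-g_{Q_0}|\le\Lambda$ a.e.\ on $Q_0\setminus\bigcup_jQ_j$. Hence, up to a null set,
\[
  \{x\in Q_0:|g(x)-g_{Q_0}|>t+2\Lambda\}\subseteq\bigcup_j\{x\in Q_j:|g(x)-g_{Q_j}|>t\},
\]
so $\mes\{x\in Q_0:|g-g_{Q_0}|>t+2\Lambda\}\le\sum_j\phi(t)|Q_j|\le\frac1\Lambda\phi(t)|Q_0|$; dividing by $|Q_0|$ and taking the supremum over $Q_0$ gives the recursion. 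Iterating it $\lfloor t/2\Lambda\rfloor$ times and using $\phi(0)\le1$ yields $\phi(t)\le\Lambda^{1-t/2\Lambda}=Ce^{-ct}$ with $C=\Lambda$ and $c=\frac{\log\Lambda}{2\Lambda}$ absolute. Taking $Q=I_0=\TT$ (for which $g_{I_0}=<g>=0$) and undoing the normalization delivers exactly \eqref{jn}.

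I expect the only delicate point to be the bookkeeping of the stopping-time decomposition on the circle --- ensuring a legitimate dyadic grid for $\TT=\R/\IZ$ and that the dyadic parent of every selected stopping interval stays inside $Q_0$ (automatic once $Q_0$ itself is dyadic and stopping occurs strictly below it; alternatively one works with the two or three shifted dyadic grids whose union is comparable to the family of all subarcs). A completely equivalent route is to deduce $\int_Q\exp(c|g-g_Q|)\,dx\le C|Q|$ from the same recursion and then apply Chebyshev; either way the remaining ingredients --- Chebyshev's inequality, the Lebesgue differentiation theorem, and the definition of $\|f\|_{BMO}$ as a supremum of normalized mean oscillations over intervals --- are entirely routine.
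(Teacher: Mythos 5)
Your proof is correct: it is the standard Calder\'on--Zygmund stopping-time proof of the John--Nirenberg inequality, and all the steps (maximality giving the doubling bound $|g_{Q_j}-g_{Q_0}|\le 2\Lambda$, the Chebyshev packing bound $\sum_j|Q_j|\le \Lambda^{-1}|Q_0|$, the Lebesgue differentiation step off the stopping intervals, and the iteration of $\phi(t+2\Lambda)\le \Lambda^{-1}\phi(t)$) are carried out accurately, with the passage from arcs of $\TT$ to the dyadic grid on $[0,1)$ handled legitimately since only the root interval is needed in the final application. The paper states this lemma as a classical fact without proof, so there is no argument in the text to compare against; your write-up simply supplies the standard proof.
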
~\\

At last,  we consider the distances between $u_n^t$ and $u_n$, and $f_n$ and $f_n^t$.
\begin{lemma}\label{dif-dis}
  Let $A,B:\mathbb{T}\to SL(2,\mathbb{R})$, and $a_{ij}(x)$ and $b_{ij}(x)$ be the $(i,j)$ elements of $\prod_{j=0}^{n-1}A(x+j\omega)$ and $\prod_{j=0}^{n-1}B(x+j\omega)$ respectively.  Assume $\max_{x\in\mathbb{T}}\left\{\|A(x)\|,\|B(x)\|\right\}\le \exp(S)$ and $\max_{x\in\mathbb{T}}\|A(x)-B(x)\|<\kappa$. Then, for any $x\in\mathbb{T}$, $n\in \mathbb{Z}^+$ and irrational  $\omega$,
  \begin{equation}\label{dis-dif}
   \left|a_{ij}(x)-b_{ij}(x)\right|\le  \left\|\prod_{j=0}^{n-1}A(x+j\omega)-\prod_{j=0}^{n-1}B(x+j\omega)\right\|\le n\kappa \exp\left((n-1)S\right),
  \end{equation}
  \begin{equation}\label{dis-dif-log}
    \left|\frac{1}{n}\log\left\|\prod_{j=0}^{n-1}A(x+j\omega)\right\|-\frac{1}{n}\log\left\|\prod_{j=0}^{n-1}B(x+j\omega)\right\|\right|\le \kappa \exp((n-1)S)
  \end{equation}
  and
  \begin{equation}\label{dis-dif-eme}
    \left|\frac{1}{n}\log\left|a_{ij}(x)\right|-\frac{1}{n}\log\left|b_{ij}(x)\right|\right|\le \frac{\kappa \exp((n-1)S)}{\max\left\{\left|a_{ij}(x)\right|, \left|b_{ij}(x)\right| \right\}},
  \end{equation}
  provided the right-hand sides of (\ref{dis-dif-log}) and (\ref{dis-dif-eme}) are less than $1/2.$ Moreover, the quantity $\exp((n-1)S$ in these expressions can be replaced by any new bound of $\left\|\prod_{j=0}^{n-1}A(x+j\omega)\right\|$ and $\left\|\prod_{j=0}^{n-1}B(x+j\omega)\right\|$.
\end{lemma}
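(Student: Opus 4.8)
The plan is to prove the three estimates in the order stated, building the later ones on the first, since the first is the only genuinely multiplicative statement. For the matrix-difference bound (\ref{dis-dif}), I would write the standard telescoping identity
\[
\prod_{j=0}^{n-1}A(x+j\omega)-\prod_{j=0}^{n-1}B(x+j\omega)
=\sum_{k=0}^{n-1}\left(\prod_{j=k+1}^{n-1}A(x+j\omega)\right)\bigl(A(x+k\omega)-B(x+k\omega)\bigr)\left(\prod_{j=0}^{k-1}B(x+j\omega)\right),
\]
where empty products are the identity. Each of the $n$ summands is bounded in operator norm by $\exp((n-1-k)S)\cdot\kappa\cdot\exp(kS)=\kappa\exp((n-1)S)$, using the hypotheses $\|A(x)\|,\|B(x)\|\le e^S$ and $\|A(x)-B(x)\|<\kappa$ together with submultiplicativity of the norm. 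Summing over $k$ gives the factor $n$, and the entrywise bound $|a_{ij}-b_{ij}|\le\|{\cdot}\|$ is immediate since matrix entries are dominated by the operator norm. This step is routine; the only care needed is to keep the ranges of the partial products disjoint so the replacement error is introduced exactly once per summand.

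For the logarithmic estimates (\ref{dis-dif-log}) and (\ref{dis-dif-eme}), I would use the elementary inequality $|\log a-\log b|\le |a-b|/\min\{a,b\}$ for positive reals, or equivalently, if $|a-b|\le\tfrac12\min\{a,b\}$ then $|\log a-\log b|\le 2|a-b|/\max\{a,b\}$ up to the stated constant; here the hypothesis that the right-hand sides are $<1/2$ is exactly what legitimizes this linearization. For (\ref{dis-dif-log}) I apply this with $a=\|\prod A(x+j\omega)\|$, $b=\|\prod B(x+j\omega)\|$: since both norms are at least $1$ (they are products of $SL(2,\R)$ matrices, hence have determinant one and operator norm $\ge 1$), the denominator $\min\{a,b\}\ge 1$, and the reverse triangle inequality $|a-b|\le\|\prod A-\prod B\|\le n\kappa e^{(n-1)S}$ from (\ref{dis-dif}) gives the bound — here I would absorb the factor $n$ into a slightly enlarged exponent or simply note $ne^{(n-1)S}$ is itself a valid new norm bound, which is the content of the final sentence of the lemma. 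For (\ref{dis-dif-eme}) the same linearization with $a=|a_{ij}(x)|$, $b=|b_{ij}(x)|$ and numerator bounded via (\ref{dis-dif}) yields the quotient form with $\max\{|a_{ij}|,|b_{ij}|\}$ in the denominator.

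The main obstacle — really the only subtlety — is bookkeeping the constant $n$ versus $\exp((n-1)S)$ and ensuring consistency with the closing remark that $\exp((n-1)S)$ may be replaced by any actual bound $\mathcal{N}$ on the two products. I would handle this by carrying the sharper quantity $n\kappa e^{(n-1)S}$ through (\ref{dis-dif}), then observing that whenever one has an a priori bound $\|\prod A(x+j\omega)\|,\|\prod B(x+j\omega)\|\le\mathcal{N}$, the telescoping sum can instead be estimated summand-by-summand as $\mathcal{N}\cdot\kappa$ after splitting at the index $k$ and bounding each partial product by $\mathcal{N}$ (this wastes a factor but is what the remark asserts), so the replacement is legitimate in every occurrence. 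For the logarithmic bounds the substitution is transparent because only $\min\{a,b\}\ge 1$ and the numerator bound were used. No analyticity, ergodicity, or properties of $\omega$ enter — the statement holds for arbitrary irrational $\omega$ precisely because it is purely a perturbative identity for finite products, which is why it is placed at the end of the preliminaries as a black box for comparing the Gevrey cocycle $M_n$ with its analytic truncation $M_n^t$.
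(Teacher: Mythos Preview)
Your proposal is correct and follows essentially the same route as the paper: the telescoping identity for (\ref{dis-dif}) and the linearization $|\log a-\log b|\le|a-b|/\min\{a,b\}$ combined with $\|\prod_{j} A(x+j\omega)\|\ge 1$ for $SL(2,\R)$ products to get (\ref{dis-dif-log}), with (\ref{dis-dif-eme}) handled analogously. If anything you are more careful than the paper about the stray factor $n$ when passing from (\ref{dis-dif}) to (\ref{dis-dif-log}); your one loose remark---bounding \emph{partial} products by a bound $\mathcal{N}$ on the full product---does not hold in general for $SL(2,\R)$ cocycles, but the paper's closing sentence is equally informal on this point and it plays no role in the applications.
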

\begin{proof}
\begin{eqnarray*}
 \left\|\prod_{j=0}^{n-1}A(x+j\omega)-\prod_{j=0}^{n-1}B(x+j\omega)\right\|&\le&\left\|\prod_{j=0}^{n-1}A(x+j\omega)-\prod_{j=0}^{n-2}A(x+j\omega)B(x+(n-1)\omega)\right\|\\
&&\ \ +\cdots\\
&&\ \ +\left\|A(x)A(x+\omega)\prod_{j=2}^{n-1}B(x+j\omega)-A(x)\prod_{j=1}^{n-1}B(x+j\omega)\right\|\\
&&\ \ +\left\|A(x)\prod_{j=1}^{n-1}B(x+j\omega)-\prod_{j=0}^{n-1}B(x+j\omega)\right\|\\
&\le& n\kappa \exp((n-1)S).
\end{eqnarray*}
Therefore,
\allowdisplaybreaks
\begin{eqnarray*}
  &&\left|\frac{1}{n}\log\left\|\prod_{j=0}^{n-1}A(x+j\omega)\right\|-\frac{1}{n}\log\left\|\prod_{j=0}^{n-1}B(x+j\omega)\right\|\right|=\frac{1}{n}\left|\log\frac{\left\|\prod_{j=0}^{n-1}A(x+j\omega)\right\|}{\left\|\prod_{j=0}^{n-1}B(x+j\omega)\right\|}\right|\\
  &=&\frac{1}{n}\left|\log\left(1+\frac{\left\|\prod_{j=0}^{n-1}A(x+j\omega)\right\|
  -\left\|\prod_{j=0}^{n-1}B(x+j\omega)\right\|}{\left\|\prod_{j=0}^{n-1}B(x+j\omega)\right\|}\right)\right|\nonumber\\
  &\leq &\frac{1}{n}\left|\frac{\left\|\prod_{j=0}^{n-1}A(x+j\omega)\right\|
  -\left\|\prod_{j=0}^{n-1}B(x+j\omega)\right\|}{\left\|\prod_{j=0}^{n-1}B(x+j\omega)\right\|}\right|\\
  &\le &\kappa \exp((n-1)S).
\end{eqnarray*}
And (\ref{dis-dif-eme}) can be obtained similarly.
\end{proof}
Due to Lemma \ref{dif-dis} and (\ref{dis-g-gn}), it has that
\[
\left|\|M_n^t(x,E,\omega)\|-\|M_n(x,E,\omega)\|\right|\leq n \|v\|_{s,K}\exp((n-1)S)\exp\left(-\frac{\rho}{2}n^2\right)
\]
Therefore, for any $n>n_0=(\log \lambda)^{\tilde C}$, where $\tilde C:=\tilde C(s,K, \|v\|_{s,K})$,
\begin{equation}\label{dis-un-unt}
|u_n(x,E,\omega)-u_n^t(x,E,\omega)|\leq e^{-n}, \forall x\in \TT,
\end{equation}
and
\begin{equation}\label{dis-ln-lnt}
 |L_n(E,\omega)-L_n^t(E,\omega)|<e^{-n}.
\end{equation}

\subsection{Cartan's Estimate}\label{sec:Cartan} We recall the
definition of Cartan sets from \cite{GS08}. We use the notation $
\cD(z_0,r)=\{ z\in \C: |z-z_0|<r \} $.
\begin{defi}\label{defi:cartansets} Let $H \ge 1$.  For an arbitrary set $\cB \subset \cD(z_0,
1)\subset \C$ we say that $\cB \in \car_1(H, J)$ if $\cB\subset
\bigcup\limits^{j_0}_{j=1} \cD(z_j, r_j)$ with $j_0 \le J$, and
\begin{equation}
\sum_j\, r_j < e^{-H}\ .
\end{equation}
If $d\ge 1$ is an  integer and $\cB \subset
\prod\limits_{j=1}^d \cD(z_{j,0}, 1)\subset \C^d$, then we define
inductively that $\cB\in \car_d(H, J)$ if for any $1 \le j \le d$ there
exists $\cB_j \subset \cD(z_{j,0}, 1)\subset \IC, \cB_j \in \car_1(H,
J)$ so that $\cB_z^{(j)} \in \car_{d-1}(H, J)$ for any $z \in \IC
\setminus \cB_j$,  here $\cB_z^{(j)} = \left\{(z_1, \dots, z_d) \in \cB:
z_j = z\right\}$.
\end{defi}

The definition is motivated by the following
generalization of the usual Cartan estimate to several variables. Note
that given a set $ S $ that has a centre of symmetry, we will let
$ \alpha S $, $ \alpha>0 $, stand for the set scaled with respect to
its centre of symmetry.

\begin{lemma}[{\cite[Lem.~2.15]{GS08}}]\label{lem:high_cart}
 Let $\varphi(z_1, \dots, z_d)$ be an analytic function defined
on a polydisk $\cP = \prod\limits^d_{j=1} \cD(z_{j,0}, 1)$, $z_{j,0} \in
\IC$.  Let $M \ge \sup\limits_{z\in\cP} \log |\varphi(z)|$,  $m \le
\log|\varphi(z_0) |$,
$z_0 = (z_{1,0},\dots, z_{d,0})$.  Given $H
\gg 1$ there exists a set $\cB \subset \cP$,  $\cB \in
\car_d\left(H^{1/d}, J\right)$, $J = C_d H(M - m)$,  such that
\begin{equation}\label{eq:cart_bd}
  \log | \varphi(z)| > M-C_d H(M-m)
\end{equation}
for any $z \in \frac{1}{6}\cP\setminus \cB$.
Furthermore, when $ d=1 $ we can take
$ J=C(M-m) $ and keep only the disks of $ \cB $ containing a zero of $\phi$ in them.
\end{lemma}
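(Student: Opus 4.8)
The plan is to prove Lemma \ref{lem:high_cart} by induction on the dimension $d$; the exponent $H^{1/d}$ built into $\car_d(H^{1/d},J)$ is exactly what will make a $d$-fold iteration of the one–variable Cartan estimate yield a bound that is \emph{linear} in $H$ rather than of order $H^d$. Throughout I write $\tilde H:=H^{1/d}$, and I recall from Definition \ref{defi:cartansets} that membership of a set $\cB$ in $\car_d(\tilde H,J)$ is checked coordinate by coordinate: for each $j$ one must exhibit $\cB_j\subset\cD(z_{j,0},1)$ with $\cB_j\in\car_1(\tilde H,J)$ such that every slice $\cB_{z_j}^{(j)}$ with $z_j\notin\cB_j$ lies in $\car_{d-1}(\tilde H,J)$.

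For the base case $d=1$ I would put $u:=\log|\varphi|$, a subharmonic function on $\cD(z_0,1)$; since $m\le\log|\varphi(z_0)|$ forces $\varphi(z_0)\ne0$, Jensen's formula (equivalently the Riesz mass bound \eqref{21002} of Lemma \ref{lem:riesz}) bounds the number $N$ of zeros $\zeta_1,\dots,\zeta_N$ of $\varphi$ in $\cD(z_0,\tfrac34)$ by $N\le C(M-m)$. Factor $\varphi=P\cdot g$ with $P(z)=\prod_{k=1}^N(z-\zeta_k)$; then $g$ is analytic on $\cD(z_0,\tfrac78)$ and zero-free on $\cD(z_0,\tfrac34)$, and since $|P|\ge 8^{-N}$ while $|\varphi|\le e^M$ on the circle $|z-z_0|=\tfrac78$, the maximum principle gives $\log|g|\le M+CN\le M+C(M-m)$ on $\cD(z_0,\tfrac78)$; as $\log|g(z_0)|=\log|\varphi(z_0)|-\log|P(z_0)|\ge m$, Harnack's inequality (Borel--Carath\'eodory) applied to the harmonic function $\log|g|$ yields $\log|g(z)|\ge M-C(M-m)$ on $\cD(z_0,\tfrac12)$. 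For $P$ I invoke the classical Boutroux--Cartan covering lemma with $\eta=e^{-H}/7$: the sublevel set $\{z:|P(z)|<\eta^N\}$ is covered by at most $N$ disks of total radius $<e^{-H}$, each containing one of the $\zeta_k$, and off this set $\log|P(z)|\ge -N(H+\log 7)\ge -CH(M-m)$ for $H\gg1$. Adding the two estimates on $\tfrac16\cP=\cD(z_0,\tfrac16)\subset\cD(z_0,\tfrac12)$ gives $\log|\varphi(z)|>M-CH(M-m)$ there, with the exceptional set in $\car_1(H,N)$, $N\le C(M-m)$, and its disks centred at zeros of $\varphi$ — which is precisely the ``furthermore'' clause.

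For the inductive step ($d\ge2$, the case $d-1$ being known) I would set $C_d:=C\,C_{d-1}$ and
\[
\cB:=\bigl\{z\in\tfrac16\cP:\ \log|\varphi(z)|\le M-C_dH(M-m)\bigr\},
\]
so that the asserted bound holds on $\tfrac16\cP\setminus\cB$ by construction; it remains to show $\cB\in\car_d(\tilde H,J)$ with $J:=C_dH(M-m)$. By symmetry in the $d$ coordinates it suffices to produce $\cB_1$, and slices $\cB_{z_1}^{(1)}$ with $z_1\notin\cD(z_{1,0},\tfrac16)$ are empty since $\cB\subset\tfrac16\cP$, so fix $z_1\in\cD(z_{1,0},\tfrac16)$. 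First apply the base case to the one–variable function $z_1\mapsto\varphi(z_1,z_{2,0},\dots,z_{d,0})$ with parameter $\tilde H$: this gives $\cB_1\in\car_1(\tilde H,C(M-m))\subset\car_1(\tilde H,J)$ off which $\log|\varphi(z_1,z_{2,0},\dots,z_{d,0})|>M-C\tilde H(M-m)=:m_1$. For such $z_1$, apply the dimension-$(d-1)$ hypothesis to $(z_2,\dots,z_d)\mapsto\varphi(z_1,z_2,\dots,z_d)$ on $\prod_{j=2}^d\cD(z_{j,0},1)$, with upper bound $M$, lower bound $m_1$ at the centre, and parameter $\tilde H^{\,d-1}$; since $(\tilde H^{\,d-1})^{1/(d-1)}=\tilde H$ and $\tilde H^{\,d-1}\cdot\tilde H=H$, the resulting exceptional set lies in $\car_{d-1}\bigl(\tilde H,\ C_{d-1}\tilde H^{\,d-1}(M-m_1)\bigr)=\car_{d-1}(\tilde H,J)$ and off it $\log|\varphi|>M-C_{d-1}\tilde H^{\,d-1}(M-m_1)=M-C_dH(M-m)$. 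Hence $\cB_{z_1}^{(1)}$ is contained in this $(d-1)$-dimensional exceptional set and so lies in $\car_{d-1}(\tilde H,J)$, as required; running the same argument with coordinate $j$ in place of $1$ produces $\cB_j$ for each $j$, completing the induction.

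The substantive work is the base case, and there the classical Cartan covering estimate for the monic polynomial $P$ — sublevel set covered by at most $N$ disks of small total radius, each around a zero — together with the Harnack/Borel--Carath\'eodory lower bound for the zero-free factor $g$ form the technical heart. The remaining delicate point is purely combinatorial bookkeeping: choosing $\tilde H=H^{1/d}$, checking that the Cartan parameters $(\tilde H,J)$ and the constants $C_d$ compose correctly through the $d$ successive slicings, and verifying that the \emph{single} sublevel set $\cB$ of $\log|\varphi|$ simultaneously satisfies the per-coordinate conditions of Definition \ref{defi:cartansets}.
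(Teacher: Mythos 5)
The paper does not prove Lemma \ref{lem:high_cart} at all---it is imported verbatim from \cite[Lem.~2.15]{GS08}---so there is no internal proof to compare against. Your reconstruction (one-variable base case via Jensen's zero count, the Boutroux--Cartan covering lemma for the zero factor and Harnack for the zero-free factor, then induction on $d$ with the parameter split $H=\tilde H\cdot\tilde H^{\,d-1}$, $\tilde H=H^{1/d}$) is correct and is essentially the argument given in the cited reference, including the ``furthermore'' clause for $d=1$.
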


\subsection{semialgebraic set}\label{sec:semi-set}
A set $ \cS\subset\R^d $ is called semialgebraic if it is
a finite union of sets defined by a finite number of polynomial
equalities and inequalities. More precisely, a semialgebraic set
$ \cS\subset\R^d $ is given by an expression
\begin{equation*}
  \cS=\cup_j \cap_{l\in L_j} \{ P_l k_{jl} 0  \},
\end{equation*}
where $ \{P_1,\ldots,P_k\} $ is a collection of polynomials of $ d $
variables,
\begin{equation*}
  L_j\subset\{1,\ldots,k\}\text{ and }k_{jl}\in\{>,<,=\}.
\end{equation*}
If the degrees of the polynomials are
bounded by $ p $, then we say that the degree of $ \cS $ is bounded by
$ kp $. See \cite{B05} for more information on
semialgebraic sets.

In our context, semialgebraic sets can be introduced by approximating
the Gevrey function $ v$ with a polynomial $ \tilde v $. More
precisely, given $ N\ge 1 $, by truncating $ v$'s Fourier series and
the Taylor series of the trigonometric functions, one can obtain a
polynomial $ \tilde v_n $ of degree less than
\begin{equation*}
  C(d,\rho)(1+\log \|v\|_\infty)n^{4s}
\end{equation*}
such that
\begin{equation}\label{eq:V-tilde}
  \sup_{x\in\TT^d}|v(x)-\tilde v_n(x)|\le \exp(-n^2).
\end{equation}
We also need the following lemma related to this theory.
\begin{corollary}[Theorem 9.3 in \cite{B05}]\label{number}
Let $ \cS\subset [0,1]^d $ be semialgebraic of degree $ p $.  Then, the number of connected components of $\cS$ does not exceed $k^d\left(O(p)\right)^d$.
\end{corollary}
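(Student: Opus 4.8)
The plan is to prove this along the classical lines of Ole\u{\i}nik--Petrovski\u{\i}, Thom and Milnor, in the form adapted to semialgebraic sets; this is essentially the route taken in \cite{B05}. Write $\cS=\bigcup_j\bigcap_{l\in L_j}\{P_l\,k_{jl}\,0\}$ with polynomials $P_1,\dots,P_k$ of degree $\le p$. First I would reduce to \emph{closed, bounded} sets: since $\cS\subset[0,1]^d$ is already bounded, it is enough to replace each strict inequality $P_l>0$ by $P_l\ge\varepsilon$ and each equality $P_l=0$ by $-\varepsilon\le P_l\le\varepsilon$, with $\varepsilon>0$ infinitesimal (formally, passing to a real closed extension). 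This produces a closed semialgebraic set with at least as many connected components and with defining data of the same complexity. Every connected component of the new set is a union of connected components of realizations of the sign conditions $\sigma\in\{-1,0,1\}^k$ of $(P_1,\dots,P_k)$, so it suffices to bound the total number of connected components of all these realizations.

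Next comes the Morse-theoretic heart of the argument. After a further infinitesimal Milnor-type deformation of the $P_i$ (adding a small multiple of $x_1^2+\dots+x_d^2-R^2$, etc.) I may assume that for every $I\subset\{1,\dots,k\}$ the variety $V_I:=\{P_i=0:i\in I\}$ is a smooth complete intersection of codimension $|I|$ (in particular $V_I=\emptyset$ once $|I|>d$), and that a generic linear form $\ell(x)=\langle v,x\rangle$ restricts to a Morse function with isolated critical points on every smooth stratum $V_I\cap\{P_i\ne 0:i\notin I\}$. On a closed bounded set $\ell$ attains its minimum on each connected component, and taking the minimal $I$ for which that minimizer lies in the interior of its stratum, the minimizer is a critical point of $\ell|_{V_I}$. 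Distinct components of distinct sign realizations are charged in this way to distinct critical points (distinct points, and for a fixed $V_I$ distinct critical points of $\ell|_{V_I}$), so the count is bounded by $\sum_{|I|\le d}\#\{\text{critical points of }\ell|_{V_I}\}$.

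To finish, I would estimate the number of critical points of $\ell$ on (the smooth part of) $V_I$, $|I|=m\le d$, by B\'ezout's theorem. Such a point is characterized by $P_i=0$ for $i\in I$ together with the vanishing of all $(m+1)\times(m+1)$ minors of the matrix with rows $v,\nabla P_{i_1},\dots,\nabla P_{i_m}$; selecting $d-m$ of these minors generically gives a system of $d$ polynomial equations of degrees $\le p$ (the $m$ original ones) and $\le mp$ (the $d-m$ minors), whose zero set over $\C$ is finite of cardinality $\le p^m(mp)^{d-m}=(O(p))^d$. Summing over the $\sum_{m=0}^d\binom{k}{m}=O(k^d)$ choices of $I$ gives the bound $k^d(O(p))^d$ for the number of connected components of $\cS$. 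Alternatively one may simply invoke the sharper statements bounding the sum of all Betti numbers of a sign condition on $k$ polynomials of degree $\le p$ in $\R^d$ by $(O(kp))^d$, of which $b_0$ is the relevant term.

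I expect the main obstacle to be making the genericity reductions rigorous while keeping the stated complexity: one must (i) deform the $P_i$ so that the relevant varieties become smooth complete intersections without destroying connected components --- Milnor's trick of intersecting with a large sphere plus a transversality/limiting argument as the deformation parameter tends to $0$; (ii) choose $v$ so that $\ell_v$ is simultaneously Morse on all the (finitely many) strata, via a Sard-type argument or a quantitative general-position argument; and (iii) run an induction on $d$ to handle minimizers sitting on positive-codimension strata, applying the inductive hypothesis to $V_I$ in ambient dimension $d-|I|$. These are standard in real algebraic geometry, and the careful bookkeeping that yields precisely $k^d(O(p))^d$ is carried out in \cite{B05}.
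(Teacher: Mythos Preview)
The paper does not prove this statement at all; it is simply quoted as Theorem~9.3 from Bourgain's book \cite{B05} and used as a black box. So there is no ``paper's own proof'' to compare against --- the paper treats this as a known result from real algebraic geometry.

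Your proposal is a reasonable sketch of the classical Ole\u{\i}nik--Petrovski\u{\i}--Thom--Milnor argument (as refined by Basu--Pollack--Roy) that indeed yields bounds of this type, and is presumably what Bourgain's exposition in \cite{B05} is based on. The outline is correct in spirit: reduce to closed sets, stratify by sign conditions, deform to general position, and count critical points of a generic linear functional via B\'ezout. The caveats you list (making the infinitesimal deformation and genericity arguments precise without inflating the degree count) are exactly the delicate points, and you are right that they are standard but require care. For the purposes of this paper, however, none of that is needed --- a citation suffices.
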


\section{Large Deviation Theorems }
In this section, we will apply the  induction to obtain the  following LDTs:
\begin{prop}\label{ldt-thm}
Let  $\omega\in\cD(c,A)$ and  $\lambda>\lambda_0(v,c,A,s,K)$. Then,
\begin{equation}\label{posi-le-g}
L(E,\omega)\gtrsim \log\lambda-(\log\lambda)^{\frac{1}{2}}.
\end{equation}
And there exists a small constant $\nu(A)$  such that  for any   $n>n_0(\lambda,c,A,v,s,K)$ and $\delta\gg n^{-\nu}$
\begin{equation}\label{ldt-ung}
   meas\left(\left\{x:\left|u_{n}(x,E,\omega)-L_{n}(E,\omega)\right|>\delta
\right\}\right)<\exp\left(-c\delta n^{\nu}\right),
  \end{equation}
\begin{equation}\label{ldt-fng}meas\left(\left\{x:\left|\frac{1}{n}\log|f_{n}(x,E,\omega)|-L_{n}(E,\omega)\right|>\delta
\right\}\right)<\exp\left(-c\delta n^{\nu}\right).
  \end{equation}
  Moreover, the set on the left-hand side of  (\ref{ldt-fng}) is  contained in the union of less than  $n^{C_0s}$ intervals, where $C_0=C_0(v)$ is a constant.
\end{prop}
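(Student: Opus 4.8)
The plan is to prove Proposition \ref{ldt-thm} by an induction on scales, using the analytic truncation $v_n$ as a bridge to the subharmonic machinery of Section~2 while keeping track of the Łojasiewicz inequality for the non-degenerate initial step. First I would set up the base case: at scale $n=n_0=(\log\lambda)^{\tilde C}$, the non-degeneracy condition \eqref{non-deg} together with the Łojasiewicz-type inequality gives a lower bound $\mes\{x:|v(x+j\omega)-a|<\epsilon\}\lesssim \epsilon^{1/C_0}$ uniformly in $a$, so that for $\lambda$ large the diagonal entries of $M_{n_0}$ dominate and $u_{n_0}(x,E,\omega)$ is within $(\log\lambda)^{1/2}$ of $n_0\log\lambda$ off a set of small measure; one reads off \eqref{posi-le-g} from $L\ge L_{n_0}-o(\log\lambda)$. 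The key point is that this exceptional set is a sublevel set of a real-analytic (indeed polynomial, after a further truncation as in \eqref{eq:V-tilde}) function of $x$ of controlled degree, hence contained in at most $n^{C_0 s}$ intervals, which is where the last clause of the proposition originates and which must be propagated through the induction.

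For the inductive step I would pass from $u_n$ to $u_n^t$ using \eqref{dis-un-unt}–\eqref{dis-ln-lnt} (so the error is $e^{-n}$, negligible against $n^{-\nu}$), work entirely with the subharmonic functions $u_n^t$ and $\log|f_n^t|$ on the strip $\TT_{\rho_n}$ with $\rho_n=\tfrac{\rho}{2}n^{-2(s-1)}$, and combine two ingredients at the previous scale: the avalanche principle / subadditivity to express $u_{2^k m}^t$ as a telescoped sum of $u_m^t(\cdot+j m\omega)$ up to a small error governed by the previous LDT, and the Diophantine large-deviation estimate Proposition~\ref{sbet} applied to the shifts $\sum_j u_m^t(x+jm\omega)$. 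Here the exponent $\nu=\tfrac{1}{2A}$ enters through $\nu$ in Proposition~\ref{sbet}, and the width loss $\rho_n\sim n^{-2(s-1)}$ only costs a polynomial factor absorbed into the constants and into the requirement $\delta\gg n^{-\nu}$. In place of the BMO/John–Nirenberg argument of \cite{GS08} (which needs a usable average $\langle u\rangle$ that may not exist here), I would use Cartan's estimate, Lemma~\ref{lem:high_cart} with $d=1$, applied to the holomorphic function $f_n^t(z,E,\omega)$ on a disk of radius $\sim\rho_n$: with $M=\sup\log|f_n^t|\le nS(\lambda)$ and $m$ a lower bound at a good point (supplied by the inductive $u_n$-estimate via the relation \eqref{Mnaxdet} between $f_n$ and $M_n$), Cartan gives $\log|f_n^t(x)|>n L_n - n\delta$ off a set $\cB\in\car_1(H,J)$ with $J=C(M-m)\lesssim nS(\lambda)$; since the kept disks each contain a zero of $f_n^t$, a polynomial of degree $O(n^{4s})$, their number — hence the number of intervals covering the exceptional set — is $\lesssim n^{C_0 s}$, and their total radius is $<e^{-H}$, which is the measure bound.

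Finally I would reconcile the two formulations \eqref{ldt-ung} and \eqref{ldt-fng}: the bound on $u_n$ follows from the one on $\log|f_n|$ together with \eqref{Mnaxdet}, since $\|M_n\|$ is comparable to $\max_j |f_{n-j}|$, and conversely the near-maximality of $u_n$ off a small set forces $\log|f_n|$ to be near-maximal there too, because if all four entries in \eqref{Mnaxdet} were small the norm would be small; the upper bounds $u_n,u_n^t\le S(\lambda)$ from \eqref{def-S} take care of one-sided control for free. Transferring back from $f_n^t$ to $f_n$ costs only $e^{-n}$ by \eqref{dis-dif-eme} of Lemma~\ref{dif-dis}, provided we stay on the region where $|f_n^t|$ is not super-exponentially small — which is exactly the complement of the exceptional set we have just controlled.

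\textbf{Main obstacle.} I expect the crux to be the bookkeeping of the shrinking analyticity width $\rho_n\sim n^{-2(s-1)}$ against the Diophantine gain: Proposition~\ref{sbet} produces $\exp(-c\rho\delta n/S)$, but with $\rho$ replaced by $\rho_n$ this is only $\exp(-c\,n^{3-2s}\delta/S)$, which is useless for $s>3/2$ at a single scale. The resolution — and the technically delicate part — is to run the multiscale induction so that one applies the LDT at scale $m$ with its \emph{own} width $\rho_m$ and then passes to scale $n=m^{O(1)}$ via the avalanche principle, so that the effective exponent at scale $n$ becomes a positive power $n^{\nu}$ of $n$ with $\nu=\nu(A)$ independent of $s$; making the induction close (choosing the scale ratios, the $\delta$-thresholds, and verifying that the accumulated exceptional sets remain coverable by $n^{C_0 s}$ intervals) is where the real work lies.
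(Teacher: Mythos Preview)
Your overall architecture---induction on scales, analytic truncation, avalanche principle plus Proposition~\ref{sbet} for the ergodic averaging, Cartan for the determinant---is the same as the paper's, and your diagnosis of the main obstacle (the shrinking width $\rho_l\sim l^{-2(s-1)}$ forcing a scale ratio $n\sim l^{O(s/\nu)}$) is exactly right; the paper takes $B=s\nu^{-1}$ and runs the induction over $n\in[n_i^{2B},n_i^{5B^2}]$ for precisely this reason.

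There is, however, a genuine quantitative gap in your Cartan step. You feed Cartan the trivial upper bound $M=\sup\log|f_n^t|\le nS(\lambda)$ and a good-point lower bound $m\approx nL_n$, so that $M-m\sim n(S-L_n)\sim n(\log\lambda)^{1/2}$. Cartan then outputs $\log|f_n^t|>M-CH(M-m)$ off a set of radius $e^{-H}$; to force the right-hand side above $nL_n-n\delta$ you are constrained to $H\lesssim 1+\delta/(\log\lambda)^{1/2}$, which gives only $\mes(\cB)\lesssim\exp(-c\delta/(\log\lambda)^{1/2})$, with no $n^\nu$ in the exponent. The proposition's bound \eqref{ldt-fng} is therefore out of reach this way. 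What is missing is a \emph{refined} upper bound $\sup_{|y|\le n^{-4s}}\log|f_n^t(x+iy)|\le nL_n+O(n^{1-\nu})$, so that $M-m\lesssim n^{1-\nu}$ and Cartan yields $H\sim\delta n^{\nu}$. The paper obtains this refined bound by a route you explicitly set aside: it \emph{does} run BMO/John--Nirenberg (Lemmas~\ref{bgsbmonorm}--\ref{jn-ineq}) on $\log\|M_n^t\|$---whose average is perfectly well-defined, being bounded below---to get the LDT \eqref{ldt-n-i1} on each horizontal line of the strip, and then invokes the sub-mean-value argument of Corollary~\ref{cor:logupper} to convert that LDT into the pointwise upper bound. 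Only afterward is Cartan applied to $f_n^t$, now with $M-m\lesssim n^{1-\nu/10}$. Your remark that ``$\langle u\rangle$ may not exist'' applies to $\log|f_n|$, not to $\log\|M_n^t\|$; the paper exploits exactly this distinction.

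A second, smaller gap: you propose to extract a good point for $f_n^t$ ``from the inductive $u_n$-estimate via \eqref{Mnaxdet}'', but knowing $\|M_n\|$ is large only tells you \emph{some} entry $f_{n-j}(\cdot+k\omega)$ is large, not $f_n$ itself. The paper instead runs a separate avalanche-principle expansion for $f_n$ (using \eqref{eq:fN-MN} and the inductive LDT \eqref{ldt-fn-ni} for $f_l$), as in \eqref{eq4.4f}, to produce the good point directly. Finally, on the interval count: your idea of bounding the number of Cartan disks by the number of zeros of the trigonometric polynomial $f_n^t$ (degree $O(n^{2s+1})$, not $n^{4s}$) is cleaner than what the paper does---it passes to a further polynomial truncation $\tilde v_n$ and invokes semialgebraic complexity (Corollary~\ref{number})---and would work once the Cartan input is repaired.
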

\begin{remark}\label{AL}
We have obtained the positive Lyapunov exponent in (\ref{posi-le-g}). The standard methods for the continuity and Anderson  Localization can be seen in many references, such as \cite{B05,K05}. The readers will find that the key of these methods is the ldt(\ref{ldt-ung}) and the other part is trivial.
\end{remark}

\subsection{The initial step}
In this part, we first list the following lemma, which is called avalanche principle and  will be applied several times in this paper.
\begin{prop}[Proposition 2.2 in \cite{GS01}]
\label{prop:AP} Let $A_1,\ldots,A_n$ be a sequence of  $2\times
2$--matrices whose determinants satisfy
\begin{equation}
\label{eq:detsmall} \max\limits_{1\le j\le n}|\det A_j|\le 1.
\end{equation}
Suppose that \be
&&\min_{1\le j\le n}\|A_j\|\ge\mu>n\mbox{\ \ \ and}\label{large}\\
   &&\max_{1\le j<n}[\log\|A_{j+1}\|+\log\|A_j\|-\log\|A_{j+1}A_{j}\|]<\frac12\log\mu\label{diff}.
\ee Then
\begin{equation}
\Bigl|\log\|A_n\cdot\ldots\cdot A_1\|+\sum_{j=2}^{n-1}
\log\|A_j\|-\sum_{j=1}^{n-1}\log\|A_{j+1}A_{j}\|\Bigr| <
C\frac{n}{\mu} \label{eq:AP}
\end{equation}
with some absolute constant $C$.
\end{prop}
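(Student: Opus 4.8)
\emph{Proof strategy for Proposition \ref{prop:AP}.} The plan is to reduce the avalanche principle entirely to the $2\times 2$ linear algebra encoded in the singular value decomposition. For a matrix $A$ with $|\det A|\le 1$ write its singular values as $s_1(A)=\|A\|\ge s_2(A)=|\det A|/\|A\|$ and let $u_A,v_A$ be unit top right and left singular vectors, so $Au_A=\|A\|v_A$; these are determined only up to sign, but every quantity below is phrased through $|\langle v_{A_j},u_{A_{j+1}}\rangle|$, so signs play no role. Hypothesis (\ref{large}) gives $s_2(A_j)\le\|A_j\|^{-1}\le\mu^{-1}$ and, more usefully, $s_2(A_j)/\|A_j\|\le\mu^{-2}$. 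I will use two elementary facts: (i) for any unit vector $w$, $\|A\|\,|\langle w,u_A\rangle|\le\|Aw\|\le\|A\|\,|\langle w,u_A\rangle|+\mu^{-1}$; and (ii) if $|\langle w,u_A\rangle|\ge\theta$ then $Aw/\|Aw\|$ makes an angle $O((\mu^{2}\theta)^{-1})$ with $\pm v_A$, because the transverse part of $Aw$ is damped by the factor $s_2(A)/\|A\|$.

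First I would isolate a two-matrix estimate. For $2\times2$ matrices $P,Q$ with $|\det P|,|\det Q|\le1$ and $\|P\|,\|Q\|\ge\mu$, set $a=|\langle v_Q,u_P\rangle|$; then $Qu_Q=\|Q\|v_Q$ and (i) yield the unconditional bounds $\|P\|\|Q\|\,a\le\|PQ\|\le\|P\|\|Q\|(a+2\mu^{-2})$, the upper one by expanding the top singular vector of $PQ$ in the singular basis of $Q$. Applying this with $(P,Q)=(A_{j+1},A_j)$ and feeding both sides into the defect hypothesis (\ref{diff}) (after dividing by $\|A_{j+1}\|\|A_j\|$) bootstraps $|\langle v_{A_j},u_{A_{j+1}}\rangle|>\mu^{-1/2}-2\mu^{-2}>\tfrac12\mu^{-1/2}$ for every $1\le j<n$. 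Moreover, whenever $a\ge\tfrac14\mu^{-1/2}$ those same norm bounds upgrade to $\big|\log\|PQ\|-\log\|P\|-\log\|Q\|-\log a\big|\le C\mu^{-3/2}$, and, by (ii) with $\theta\sim\mu^{-1/2}$, the left singular vector $v_{PQ}$ lies within angle $C\mu^{-3/2}$ of $\pm v_P$.

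Next I would run an induction on $n$ carrying the identity
\[
\log\|A_n\cdots A_1\|=\sum_{j=1}^{n}\log\|A_j\|+\sum_{j=1}^{n-1}\log|\langle v_{A_j},u_{A_{j+1}}\rangle|+O(n/\mu)\qquad(\star)
\]
together with the auxiliary statement that $v_{A_n\cdots A_1}$ is within angle $C\mu^{-3/2}$ of $\pm v_{A_n}$. For the step $n-1\to n$ write $B=A_nB'$ with $B'=A_{n-1}\cdots A_1$. The induction hypothesis, with the angle bounds just proved, gives $\|B'\|\ge\mu$ (the accumulated angle factors $\gtrsim(2\mu^{1/2})^{-(n-2)}$ being outweighed by $\prod_{j<n}\|A_j\|\ge\mu^{n-1}$, precisely because $\mu>n$), and also $v_{B'}$ within $C\mu^{-3/2}$ of $\pm v_{A_{n-1}}$, whence $|\langle v_{B'},u_{A_n}\rangle|\ge\tfrac12\mu^{-1/2}-C\mu^{-3/2}\ge\tfrac14\mu^{-1/2}$. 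Applying the conditional two-matrix estimate to the pair $(A_n,B')$ gives $\log\|B\|=\log\|A_n\|+\log\|B'\|+\log|\langle v_{B'},u_{A_n}\rangle|+O(\mu^{-3/2})$ and re-establishes the auxiliary angle statement for $B$; substituting $(\star)$ for $\log\|B'\|$ and replacing $\log|\langle v_{B'},u_{A_n}\rangle|$ by $\log|\langle v_{A_{n-1}},u_{A_n}\rangle|$, at a cost of $O(\mu^{-3/2}/\mu^{-1/2})=O(\mu^{-1})$, closes the induction with total error $O(n/\mu)$. Finally, summing the logarithmic two-matrix estimate over $1\le j\le n-1$, the $\log\|A_j\|$ and $\log|\langle\cdot\rangle|$ terms recombine via $(\star)$, and after rearranging one is left with $\big|\log\|A_n\cdots A_1\|+\sum_{j=2}^{n-1}\log\|A_j\|-\sum_{j=1}^{n-1}\log\|A_{j+1}A_j\|\big|=O(n/\mu)$, which is (\ref{eq:AP}).

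The main obstacle is the error bookkeeping: each estimate carries one of the angles $|\langle v_{(\cdot)},u_{(\cdot)}\rangle|$ in a denominator, so the whole scheme collapses unless these stay bounded below by $\sim\mu^{-1/2}$ at every stage. The structural point that makes this work — and the part requiring care — is that appending a new factor $A_n$ ``resets'' the left singular direction of the partial product to within $C\mu^{-3/2}$ of $v_{A_n}$, so the angular error does not accumulate and the angle lower bound needed for the pair $(A_n,B')$ is already supplied by the bound for $(A_{n-1},A_n)$; verifying this, together with the norm-expansion of the partial products (for which $\mu>n$ is exactly the right hypothesis), is the delicate content, the rest being bounded two-dimensional linear algebra.
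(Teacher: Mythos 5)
The paper states Proposition \ref{prop:AP} as a quoted result from \cite{GS01} and gives no proof of its own, so there is nothing internal to compare against. Your argument — polar/singular value decomposition, the two-matrix estimate $\|P\|\|Q\|a\le\|PQ\|\le\|P\|\|Q\|(a+2\mu^{-2})$ forcing $|\langle v_{A_j},u_{A_{j+1}}\rangle|\gtrsim\mu^{-1/2}$ from (\ref{diff}), and the induction tracking both the norm identity $(\star)$ and the near-alignment of $v_{A_n\cdots A_1}$ with $\pm v_{A_n}$ — is correct and is essentially the standard proof given in the cited reference.
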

We shall fix $\omega\in \cD_{c,A}$  so that it can be suppressed from the notations. Now, we present the initial step in the following lemma:
\begin{lemma} For $ \lambda\geq\lambda_0$ and $ n\leq\exp\left({(\log\lambda)}^{\frac{1}{4}}\right) $,  we have
\begin{equation}\label{ln0}
 |L_n(E)-\log\lambda|\lesssim(\log\lambda)^\frac{1}{2},
\end{equation}
and there exists a set $ \mathcal {B}_n $ , $ mes(\mathcal {B}_n) < \exp\left(-(\log\lambda)^\frac{1}{3}\right) $ , such that
\begin{equation}\label{ldt-mn-0}
  \left|\frac{1}{n}\log\|M_n(x,E)\| - \log\lambda\right| \lesssim (\log\lambda)^\frac{1}{2}
\end{equation}
and
\begin{equation}\label{ldt-fn-0}
  \left|\log\|M_n(x,E)\| - \log\|f_n(x,E)\|\right| \lesssim (\log\lambda)^\frac{1}{2}
\end{equation}
for any $ x \not\in \mathcal {B}_n$.
\end{lemma}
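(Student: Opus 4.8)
The plan is to exploit the fact that for large $\lambda$ the potential term dominates, so that away from the near-zeros of $\lambda v(x+j\omega)-E$ the transfer matrix is essentially diagonal with large entries, and then to glue things together by the avalanche principle (Proposition \ref{prop:AP}). First I would work with the analytic truncation $M_n^t$ built from $v_n$ and transfer back to $M_n$ at the end via \eqref{dis-un-unt}--\eqref{dis-ln-lnt}, since the correction is $O(e^{-n})$ and for $n\le \exp((\log\lambda)^{1/4})$ this is absorbed in the $(\log\lambda)^{1/2}$ error. For a single step, $\log\|M^t(x+j\omega,E)\| = \log|\lambda v_n(x+j\omega)-E| + O(1)$ provided $|\lambda v_n(x+j\omega)-E|\ge 1$, say; and one has the a priori bound $\log\|M^t(x+j\omega,E)\|\le S(\lambda)=\log\lambda+(\log\lambda)^{1/2}$ from \eqref{def-S}. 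The set of $x$ for which $|\lambda v_n(x+j\omega)-E|<\lambda^{1/2}$ for some $1\le j\le n$ is small: by the {\L}ojasiewicz-type inequality coming from the non-degeneracy condition \eqref{non-deg}, $\mes\{x\in\TT:|\lambda v(x)-E|<\lambda^{1/2}\}\le C\lambda^{-\gamma}$ for some $\gamma=\gamma(v)>0$, hence summing over $j$ and using $n\le\exp((\log\lambda)^{1/4})$ the exceptional set $\cB_n$ has measure $<\exp(-(\log\lambda)^{1/3})$ once $\lambda_0$ is large enough.

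Next, on the good set one has, for each $j$, the lower bound $\|M^t(x+j\omega,E)\|\ge |\lambda v_n(x+j\omega)-E|\ge \lambda^{1/2}>n$ (again using the range of $n$), so hypothesis \eqref{large} holds with $\mu=\lambda^{1/2}$. For the angle condition \eqref{diff} I would compute the product of two consecutive single-step matrices explicitly: the off-diagonal structure gives $\log\|M^t(x+(j+1)\omega,E)M^t(x+j\omega,E)\| = \log|\lambda v_n(x+(j+1)\omega)-E| + \log|\lambda v_n(x+j\omega)-E| + O(1)$ on the good set, so the bracket in \eqref{diff} is $O(1)\ll \tfrac12\log\mu = \tfrac14\log\lambda$. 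The avalanche principle then yields
\[
\Bigl|\log\|M_n^t(x,E)\| - \sum_{j=1}^{n}\log|\lambda v_n(x+j\omega)-E|\Bigr| \le C n + C\frac{n}{\lambda^{1/2}} \lesssim n,
\]
and dividing by $n$ plus $\log|\lambda v_n(x+j\omega)-E| = \log\lambda + O(1)$ (upper bound always, lower bound on the good set) gives $\bigl|\tfrac1n\log\|M_n(x,E)\|-\log\lambda\bigr|\lesssim 1 \lesssim (\log\lambda)^{1/2}$, which is \eqref{ldt-mn-0}. Integrating in $x$ over $\TT$, splitting into the good set and $\cB_n$ (on which $\tfrac1n\log\|M_n^t\|\le S(\lambda)$ and $\mes\cB_n$ is super-small), gives \eqref{ln0}. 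For \eqref{ldt-fn-0}, I would use \eqref{Mnaxdet}: $f_n$ is the $(1,1)$ entry of $M_n$, and on the good set the avalanche-principle analysis shows the norm of $M_n$ is comparable to its $(1,1)$ entry (the diagonal dominates), more precisely $|\log|f_n(x,E)| - \log\|M_n(x,E)\|| = O(n) \lesssim n(\log\lambda)^{1/2}$; combined with \eqref{dis-un-unt} this transfers to $M_n$ versus $f_n$ with an extra $e^{-n}$ which is negligible.

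The main obstacle is the angle/alignment condition \eqref{diff} — that is, controlling $\log\|M^t(x+(j+1)\omega,E)M^t(x+j\omega,E)\|$ from below so that consecutive steps do not "cancel." The point is that the $2\times 2$ product of two matrices of the form $\left(\begin{smallmatrix} E-\lambda v_n & -1\\ 1 & 0\end{smallmatrix}\right)$ has $(1,1)$ entry $(E-\lambda v_n(x+(j+1)\omega))(E-\lambda v_n(x+j\omega)) - 1$, whose modulus is $\gtrsim \lambda$ on the good set where both factors exceed $\lambda^{1/2}$; this single entry already forces the norm of the product to be at least of order $\lambda$, which is exactly what makes the bracket in \eqref{diff} bounded. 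One must be slightly careful that the good set is defined by a uniform lower bound on \emph{every} factor appearing in any consecutive pair, which is why I take the exceptional set to be the union over all $1\le j\le n$ of $\{x:|\lambda v_n(x+j\omega)-E|<\lambda^{1/2}\}$; the {\L}ojasiewicz bound and the sub-exponential range of $n$ keep its measure under control.
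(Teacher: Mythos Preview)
Your overall strategy matches the paper's: use the {\L}ojasiewicz inequality to excise a small set where $|\lambda v(x+j\omega)-E|$ is small, then apply the avalanche principle with the single-step matrices $A_j=M_1(x+j\omega)$. However, there are two concrete errors in the execution.

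\textbf{Threshold for the bad set.} You define the bad set by $|\lambda v_n(x+j\omega)-E|<\lambda^{1/2}$. On its complement you then only know $\log|\lambda v_n(x+j\omega)-E|\ge\tfrac12\log\lambda$, so your claim ``$\log|\lambda v_n(x+j\omega)-E|=\log\lambda+O(1)$ on the good set'' is false: the deviation from $\log\lambda$ can be as large as $\tfrac12\log\lambda$, and after the avalanche principle this gives only $\bigl|\tfrac1n\log\|M_n\|-\log\lambda\bigr|\lesssim\log\lambda$, not $\lesssim(\log\lambda)^{1/2}$. The paper instead takes $\delta=\exp\bigl(-(\log\lambda)^{1/2}\bigr)$ in the {\L}ojasiewicz inequality, so that on the good set $\bigl|\log|\lambda v(x+j\omega)-E|-\log\lambda\bigr|\le(\log\lambda)^{1/2}$; one still has $\mes(\cB_n)\le n\exp\bigl(-\alpha(\log\lambda)^{1/2}\bigr)<\exp\bigl(-(\log\lambda)^{1/3}\bigr)$ for $n\le \exp\bigl((\log\lambda)^{1/4}\bigr)$.

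\textbf{Comparing $\log|f_n|$ with $\log\|M_n\|$.} Inequality \eqref{ldt-fn-0} asks for $\bigl|\log\|M_n\|-\log|f_n|\bigr|\lesssim(\log\lambda)^{1/2}$, \emph{without} dividing by $n$. Your bound $O(n)$ is therefore far too weak, since $n$ ranges up to $\exp\bigl((\log\lambda)^{1/4}\bigr)$. The paper obtains the $n$-independent bound by running the avalanche principle a second time for $f_n$, using the identity \eqref{eq:fN-MN}. The resulting expansion \eqref{eq:fN-AP} differs from the expansion \eqref{ap-mn0} for $\log\|M_n\|$ only in the two boundary terms, each of which is $O\bigl((\log\lambda)^{1/2}\bigr)$ by the $l=1,2$ estimates \eqref{dis-1-2}. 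The vague ``the diagonal dominates'' does not capture this; what matters is that all interior terms cancel between the two expansions.

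A minor remark: passing through the analytic truncation $v_n$ and $M_n^t$ is unnecessary for this initial step---the paper works directly with $v$ and $M_n$.
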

\begin{proof} Klein \cite{K05} proved that for the non-degenerate Gevrey function (\ref{non-deg}),  the following {\L}ojasiewicz-type inequality holds: there exists a constant $\alpha:=\alpha(v)$ such for any $\delta>0$ and any $\gamma$,
\[mes \left\{ x: \left|v(x) - \gamma\right| < \delta \right\} < \delta^{\alpha}.
\]
Choosing $ \delta = \exp\left(-(\log\lambda)^\frac{1}{2}\right) $ and $\gamma=\frac{E}{\lambda}$, we have that
\[ mes\left\{ x:\log|\lambda v-E|-\log\lambda < -(\log\lambda)^\frac{1}{2}\right\}\le \exp\left(-\alpha(\log\lambda)^\frac{1}{2}\right)
.\]
On the other hand, by (\ref{def-S}), it yields that  for any $\lambda>\lambda_0$,
\[\log{|\lambda v-E|}-\log\lambda < (\log \lambda )^\frac{1}{2},\ \forall x\in\TT.\]
Set $ \mathcal {B} = \left\{x: \left|\log{|\lambda v-E|}-\log\lambda \right|< (\log \lambda )^\frac{1}{2}\right\} $ and  $\mathcal {B}_n=\cup_{1\leq j \leq n}(\mathcal {B}-j\omega)$. Then
\[
mes(\mathcal {B}_n) \leq n\exp\left(-\alpha(\log\lambda)^\frac{1}{2}\right)< \exp\left(-(\log\lambda)^\frac{1}{3}\right).
\]
Now, for $ x \not\in \mathcal {B}_n $ , $ 1\leq j \leq n $,
\[
\left|\log|\lambda v(x+j\omega) - E| - \log\lambda\right| \leq (\log\lambda)^\frac{1}{2}
\]
and hence for $l=1,2$,
\begin{equation}\label{dis-1-2}
\left|\log\|f_l(x+(j-1)\omega,E)\| - l\log\lambda\right|, \left|\log\|M_l(x+(j-1)\omega,E)\| - l\log\lambda\right| \lesssim (\log\lambda)^\frac{1}{2}.
\end{equation}
Integrating it and recalling the setting $\lambda>\lambda_0$ and (\ref{def-S}), we obtained that
\begin{equation}\label{l12}
\left|L_1-\log\lambda\right|, |L_2-\log\lambda|\lesssim (\log\lambda)^{\frac{1}{2}}.
\end{equation}
Now, we can apply the avalanche principle, Proposition \ref{prop:AP}, and yield that  for any $ x \not\in \mathcal {B}_n $,
\begin{equation}\label{ap-mn0}
 \log\|M_n(x,E)\| = \sum_{j=0}^{n-2}\log\|M_2(x+j\omega,E)\| - \sum_{j=1}^{n-2}\log\|M_1(x+j\omega,E)\|+O(\lambda^{-\frac{1}{2}}),
\end{equation}
and
  \begin{multline}\label{eq:fN-AP}
	    \log|f_n(x,E)|\\
    =\log\left\|M_2(x,E)\begin{bmatrix}
        1 & 0\\
        0 & 0
      \end{bmatrix}\right\|+
    \sum_{j=1}^{n-3} \log\left \| M_{2} (x+j\omega,E) \right\|+\log\left\|\begin{bmatrix}
        1 & 0\\
        0 & 0
      \end{bmatrix}M_2(x+(n-2)\omega,E)\right\|\\
    - \sum_{j=1}^{n-2} \log \left\| M_1 (x+j\omega,E)\right \| + O(\lambda^{-\frac{1}{2}}).
  \end{multline}
  Here we used the fact that
  \begin{equation}\label{eq:fN-MN}
	f_n(x,E)=\begin{bmatrix}
        1 & 0\\
        0 & 0
      \end{bmatrix}M_n(x,E)\begin{bmatrix}
        1 & 0\\
        0 & 0
      \end{bmatrix}.
  \end{equation}
It follows that (\ref{ldt-mn-0}) and (\ref{ldt-fn-0})   hold  by (\ref{dis-1-2}).
Integrating (\ref{ap-mn0}) yields
\[
|nL_n(E) - (n-1)2L_2(E) + (n-2)L_1(E)|\leq C\lambda^{-\frac{1}{2}} + 4mes(\mathcal {B}_n)S \leq \exp\left(-(\log\lambda)^{\frac{1}{4}}\right)
\]
Combining it with (\ref{l12}), we obtain (\ref{ln0}).
\end{proof}
The following corollary shows that we can obtain the new upper bound of $M_n$, when the LDT holds for the subharmonic function $M_n^t$:
\begin{corollary}\label{cor:logupper} Assume the following LDT holds  for some $n>n_0$ and  any  fixed $|y| \le n^{-2s}$
\begin{equation}\label{mes}
\mes\left\{x\in \TT:\left|\log\|M^t_n(x+iy,E)\|-nL_n^t(y, E)\right|>n\delta\right\}<\exp\left(-c\delta n^{\frac{\nu}{4}}\right),
\end{equation}
where
\[L_n^t(y,E)=\frac{1}{n}\int_{\mathbb{T}}\log\|M^t_n(x+iy,E)\|dx.\]
 Then,
\begin{equation}\label{sup-bound-mn-wd}
\sup_{x\in \TT}\log |f_n(x,E)|\le \sup_{x\in \TT}\log\|M_n(x,E)\|\leq n L_n(E)+n^{1-\frac{\nu}{10}}.\end{equation}
\end{corollary}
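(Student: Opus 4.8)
The first inequality in (\ref{sup-bound-mn-wd}) is immediate from (\ref{eq:fN-MN}): the matrix $\Mat M_n(x,E)\Mat$ has $f_n(x,E)$ as its only nonzero entry, so $|f_n(x,E)|=\|\Mat M_n(x,E)\Mat\|\le\|M_n(x,E)\|$. For the second inequality, by (\ref{dis-un-unt}) and (\ref{dis-ln-lnt}) it suffices to prove the analogous bound for the analytic truncation,
\[
\sup_{x\in\TT}\log\|M^t_n(x,E)\|\le nL^t_n(0,E)+\tfrac12\,n^{1-\frac{\nu}{10}},
\]
since then $\log\|M_n(x,E)\|\le\log\|M^t_n(x,E)\|+ne^{-n}$ and $nL^t_n(0,E)\le nL_n(E)+ne^{-n}$, while $2ne^{-n}\le\tfrac12 n^{1-\nu/10}$ for $n>n_0$.

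Put $u(z):=\tfrac1n\log\|M^t_n(z,E)\|$. Since $M^t_n(z,E)$ is analytic on $\TT_n$ with $\det M^t_n\equiv1$, the function $u$ is subharmonic on $\TT_n$ and $u\ge0$; and by the bound $|v_n(z)|\le C(v)$ on $\TT_n$ from Subsection~2.2 together with (\ref{def-S}) (after enlarging $\lambda_0$ if necessary) we have $u\le S:=S(\lambda)$ on $\TT_n$. Fix $x_0\in\TT$ and set $r:=n^{-2s}$; because $\rho_n=\tfrac{\rho}{2}n^{-2(s-1)}=\tfrac{\rho}{2}n^2\,r$, for $n>n_0$ we have $r\le\tfrac12\rho_n$, so $\overline{\cD(x_0,r)}\subset\TT_n$ and the sub--mean--value inequality over a disk gives
\[
u(x_0)\le\frac{1}{\pi r^2}\iint_{\cD(x_0,r)}u\,dA=\frac{1}{\pi r^2}\int_{-r}^{r}\left(\int_{x_0-\sqrt{r^2-y^2}}^{x_0+\sqrt{r^2-y^2}}u(x+iy)\,dx\right)dy.
\]
For each fixed $y$ with $|y|\le r\le n^{-2s}$ the hypothesis (\ref{mes}) applies at height $y$: outside a set of $x$ of measure $<\exp(-c\delta n^{\nu/4})$ one has $u(x+iy)\le L^t_n(y,E)+\delta$, while everywhere $u(x+iy)\le S$, so the inner integral is at most $2\sqrt{r^2-y^2}\,(L^t_n(y,E)+\delta)+S\exp(-c\delta n^{\nu/4})$.

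It remains to replace $L^t_n(y,E)$ by $L^t_n(0,E)$ and to fix $\delta$. The function $g(y):=nL^t_n(y,E)=\int_\TT\log\|M^t_n(x+iy,E)\|\,dx$ is convex on $[-\rho_n,\rho_n]$ --- indeed $g''(y)=\int_\TT\Delta\log\|M^t_n(x+iy,E)\|\,dx\ge0$, the $\partial_x^2$-part integrating to zero by periodicity --- and $0\le g\le nS$ there, so convexity yields $|g(y)-g(0)|\le 2nS|y|/\rho_n\le 2Sn^{-1}/\rho$ for $|y|\le r$. Inserting this, integrating over $y\in[-r,r]$ and dividing by $\pi r^2$,
\[
u(x_0)\le L^t_n(0,E)+\frac{2S}{\rho n}+\delta+\frac{2S}{\pi r}\exp(-c\delta n^{\nu/4}).
\]
Now take $\delta:=n^{-\nu/5}$. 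Using $r^{-1}=n^{2s}$ and $S\le 2\log\lambda$, for $n>n_0(\lambda,c,A,v,s,K)$ each of the three error terms is $\le\tfrac16\,n^{-\nu/10}$: the first because $n^{1-\nu/10}$ dominates $O(\log\lambda)$, the second because $n^{-\nu/5}\le\tfrac16 n^{-\nu/10}$, and the third because $n^{2s+1}\exp(-cn^{\nu/20})\to0$. Multiplying by $n$ gives $\sup_{x}\log\|M^t_n(x,E)\|\le nL^t_n(0,E)+\tfrac12 n^{1-\nu/10}$, and transferring back to $M_n$ and $f_n$ as in the first paragraph completes the proof. The only points requiring care are matching the single radius $r=n^{-2s}$ both to the strip width $\rho_n\sim n^{-2(s-1)}$ (so that $u$ is subharmonic on the disk) and to the range of heights for which (\ref{mes}) is assumed, the convexity estimate controlling the vertical drift $L^t_n(y,E)-L^t_n(0,E)$, and enlarging $n_0$ as a function of $\lambda$ so that the $O(S/\rho)\sim O(\log\lambda)$ term is absorbed into $n^{1-\nu/10}$.
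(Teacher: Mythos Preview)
Your proof is correct and follows essentially the same approach as the paper: both apply the sub-mean-value inequality for the subharmonic function $u(z)=\frac1n\log\|M_n^t(z,E)\|$ over a small disk, use the LDT hypothesis at each height to control the good part of the integral, and handle the vertical drift $L_n^t(y,E)-L_n^t(0,E)$ to reduce everything to $L_n(E)$. The only cosmetic differences are that the paper cites \cite[Lemma~4.1]{GS08} for the drift bound whereas you derive it from convexity of $y\mapsto\int_\TT u(x+iy)\,dx$, the paper uses radius $r=n^{-4s}$ and the H\"older inequality on the bad set, while you use $r=n^{-2s}$ and the crude pointwise bound $u\le S$; neither change affects the argument.
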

\begin{proof}
Lemma 4.1 in \cite{GS08} proved that  for any $|y_{1}|,|y_{2}|\le \rho_n$ we have
\begin{equation}\label{L(r1)-L(r2)}
  \left|L^t_n(y_1,E)-L^t_n(y_2,E)\right|\leq \frac{S}{\rho_n}|y_1-y_2|.
\end{equation}
Combining it with (\ref{dis-ln-lnt}), we obtain that for any $|y|<n^{-4s}$,
\begin{equation}\label{ldt-complex-ln-real}
\mes\left\{x\in \TT:\left|\log\|M^t_n(x+iy,E)\|-nL_n(E)\right|>n\delta+n^{-2s}\right\}<\exp\left(-c\delta n^{\frac{\nu}{4}}\right).
\end{equation}
On the other hand,  due to the sub-mean value property for subharmonic functions, we have for any $|y|<n^{-4s}$ and $r>0$,
\begin{equation}\label{leq}
\log\|M^t_n(x+iy,E)\|\leq \frac{1}{\pi r^2}\int_{D(x+iy,r)}\log\|M^t_n(z,E)\|dz.
\end{equation}
Denote by $\cB_{y}\subset \TT$ the set in (\ref{ldt-complex-ln-real}) and choose $r=n^{-4s}<\rho_n$. Let
\[\cB=\{z=x+iy\in[0,1]\times(-r,r):x\in \cB_y\}.\]
Due to (\ref{ldt-complex-ln-real}) with choosing $\delta=n^{-\frac{\nu}{8}}$,  we have
\begin{equation}\label{pi}
\frac{1}{\pi r^2}\int_{D(x,r)\backslash B}\log\|M^t_n(z,E)\|dz\leq nL_n(E)+4n^{1-\frac{\nu}{8}}.
\end{equation}
By  the H\"older inequality, we have that
\begin{equation}\label{int-bad-for-upp-bound}
\frac{1}{\pi r^2}\int_{D(x,r)\bigcap \cB} \log\|M^t_n(z,E)\|d\xi d\zeta\ll n^{1-\frac{\nu}{8}}.
\end{equation}
Combining (\ref{leq}), (\ref{pi}) and (\ref{int-bad-for-upp-bound}), we obtain
\begin{equation}\label{sup-bound-mnt}\sup_{x\in \TT,|y|\le n^{-4s}}\log\|M^t_n(x+iy,E)\|\leq nL(E)+5n^{1-\frac{\nu}{8}}.\end{equation}
Hence, we prove this lemma by the relationship (\ref{dis-un-unt}).
\end{proof}

\subsection{The inductive step}
\begin{lemma}\label{ind-step} Let $\lambda\geq\lambda_0$.
 Assume for any $l\in [n_i,4n_i]$, where $n_i\ge n_0$, it yields that
  \begin{equation}\label{low-up-bound-le-ni}
  \left|L_{l}(E)- \log\lambda\right|\lesssim (\log\lambda)^\frac{1}{2} ,
  \end{equation}
  \begin{equation}\label{ldt-ni}
    meas\left\{x:\left|u_{l}(x,E)-L_{l}(E)\right|>\frac{1}{100}\log\lambda
\right\}<\exp\left(-n^{\frac{\nu}{3}}\right),
  \end{equation}
  \begin{equation}\label{ldt-fn-ni}
  \mes\left\{ x\in\mathbb{T}:\,\left|\frac{1}{l}\log\left\Vert f_{l}\left(x,E\right)\right\Vert-L_l(E) \right|>\frac{1}{100}\log\lambda\right\} \le \exp\left(-n^{\frac{\nu}{20}}\right).
\end{equation}
Then for any $n\in \left[ n_i^{2B},n_i^{5B^2}\right]$, where $B=s\nu^{-1}$,  we have that
\begin{equation}\label{low-up-bound-le-ni1}
     \left|L_{n}(E)-\log\lambda\right|\lesssim (\log\lambda)^\frac{1}{2},
  \end{equation}
  \begin{equation}\label{ldt-n-i1}
   meas\left(\left\{x:\left|u_{n}(x,E)-L_{n}(E)\right|>\delta
\right\}\right)<\exp\left(-c\delta n^\frac{\nu}{2}\right),
  \end{equation}
\begin{equation}\label{ldt-fn-ni1}
  \mes\left\{ x\in\mathbb{T}:\,\left|\frac{1}{n}\log\left\Vert f_{n}\left(x,E\right)\right\Vert-L_n(E) \right|>\delta \right\} \le \exp\left(-c\delta n^{\frac{\nu}{15}}\right).
\end{equation}
\end{lemma}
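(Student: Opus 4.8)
The plan is to run the standard inductive bootstrap for large deviation estimates in the positive Lyapunov exponent regime, following the scheme of \cite{GS08,GS01,GT20}, but carried out for the analytic truncations $M_n^t$ and $f_n^t$ and then transferred back to $M_n$, $f_n$ via Lemma \ref{dif-dis} and the estimates (\ref{dis-un-unt}), (\ref{dis-ln-lnt}). First I would fix $E$ and $\omega\in\cD_{c,A}$ and work with $u_n^t(z,E)=\frac1n\log\|M_n^t(z,E)\|$, which is subharmonic on $\TT_n$ with $\sup_{\TT_{\rho_n}}u_n^t\le S(\lambda)$ by (\ref{def-S}). The key point is the telescoping identity: for $m\in[n_i,4n_i]$ and $n\approx km$, the avalanche principle (Proposition \ref{prop:AP}) gives
\begin{equation*}
\log\|M_n^t(x,E)\|=\sum_{j}\log\|M_{2m}^t(x+jm\omega,E)\|-\sum_j\log\|M_m^t(x+jm\omega,E)\|+O(\text{error}),
\end{equation*}
valid provided the hypotheses (\ref{large}), (\ref{diff}) of the avalanche principle hold off a small set; the latter hold because $L_m\approx\log\lambda$ is large (by (\ref{low-up-bound-le-ni})) and the LDTs (\ref{ldt-ni}), (\ref{ldt-fn-ni}) at scales $m,2m$ control the deviation terms $\log\|M_{2m}^t\|+\log\|M_m^t\|-\log\|M_{2m}^tM_m^t\|$, which telescope once more into a difference of single-scale quantities that is small on the good set.

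Second, with this telescoping in hand, the deviation of $u_n^t(x)$ from its average $L_n^t$ at scale $n$ reduces to the deviation of sums $\frac1N\sum_{j=1}^N g(x+jm\omega)$ of the almost-periodic sampling of $g=u_{2m}^t$ or $u_m^t$ along the subsequence frequency $m\omega$. Since $m\omega$ is still Diophantine with comparable constants (one checks $\|nm\omega\|\ge c|nm|^{-A}\ge c'|n|^{-A}$ up to the fixed factor $m^{-A}$, absorbed into $n_0$), Proposition \ref{sbet} applies to the subharmonic function $g$ and yields a deviation bound $\exp(-c\rho_m\delta N/S)$. Converting $\rho_m=\frac\rho2 m^{-2(s-1)}$, $N\approx n/m$, and optimizing the split $n\approx m^{2B}$ with $B=s\nu^{-1}$ (so that $m^{-2(s-1)}\cdot n/m$ is a genuine power of $n$, roughly $n^{\nu/2}$), produces (\ref{ldt-n-i1}) for $M_n^t$, hence for $M_n$ by (\ref{dis-un-unt}); integrating the telescoping identity and using $mes(\mathcal B)$ small gives (\ref{low-up-bound-le-ni1}). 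For (\ref{ldt-fn-ni1}) one repeats the argument with the matrix $M_n^t$ flanked by $\left[\begin{matrix}1&0\\0&0\end{matrix}\right]$ as in (\ref{eq:fN-AP}), (\ref{eq:fN-MN}); the only structural change is two anomalous end-terms in the avalanche sum, which are harmless, and the slightly worse exponent $\nu/15$ reflects the extra loss from these boundary blocks and from passing $f^t_n\to f_n$.

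The main obstacle, and where I would spend the most care, is the interlocking of scales: to start the avalanche principle at scale $n\in[n_i^{2B},n_i^{5B^2}]$ one needs the single-block LDTs at scales $m$ and $2m$ for \emph{every} $m\in[n_i,4n_i]$ simultaneously, and one must verify that the error terms accumulated through the (roughly $\log_B n$ many) iterations of the avalanche principle — each contributing a multiplicative loss in the exponent and an additive $O(n/\mu)=O(n\lambda^{-1/2})$ — do not overwhelm the gain $n^{\nu/2}$; this is exactly why the coupling $\lambda$ must be taken large depending on $A$ (so that $\mu\gtrsim\lambda\gg n^{\,?}$ on the relevant ranges) and why the deviation threshold in (\ref{ldt-ni}) is kept at the crude level $\frac1{100}\log\lambda$ rather than something $n$-dependent. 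A secondary technical point is the passage between $M_n^t$ on the line and on horizontal slices $|y|\le n^{-2s}$, needed to invoke Corollary \ref{cor:logupper} for the upper bound $\sup_x\log|f_n|\le nL_n+n^{1-\nu/10}$; this upper bound is what lets one replace $\langle\log|f_n|\rangle$ (which may not even exist for the Gevrey potential) by $L_n$ throughout, so it must be threaded into the induction at the scale where (\ref{ldt-fn-ni1}) is proved. Once these bookkeeping issues are handled, the conclusion is a routine consequence of Proposition \ref{sbet}, Proposition \ref{prop:AP} and Lemma \ref{dif-dis}.
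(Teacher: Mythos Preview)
Your overall strategy---avalanche principle plus a Birkhoff-sum estimate for the telescoped blocks---is the right skeleton, but two essential steps are missing and a third is handled incorrectly.

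First, the avalanche principle holds only off the exceptional set inherited from the hypotheses (\ref{ldt-ni})--(\ref{ldt-fn-ni}) at scale $l=n_i$; that set has measure of order $\exp(-n_i^{\nu/3})$, which in terms of the new scale $n\in[n_i^{2B},n_i^{5B^2}]$ is at best $\exp(-n^{\nu^2/(6s)})$, far larger than the target $\exp(-c\delta n^{\nu/2})$. No matter how sharp your Birkhoff-sum bound from Proposition~\ref{sbet} is, the final exceptional set is the \emph{union} of the avalanche bad set and the Birkhoff bad set, and the former dominates. The paper's remedy is a second use of subharmonicity at the \emph{new} scale: once one knows $\bigl|\log\|M_n^t(x)\|-nL_n^t\bigr|=O(n^{1-3\nu/4})$ off a set of measure merely $\ll n^{-10s}$, one splits $\log\|M_n^t\|-nL_n^t=u_0+u_1$ with $u_0$ bounded and $u_1$ supported on the bad set, applies Lemma~\ref{bgsbmonorm} to get $\|\log\|M_n^t\|\|_{BMO}=O(n^{1-\nu/2})$, and then John--Nirenberg (Lemma~\ref{jn-ineq}) yields (\ref{ldt-n-i1}) with the correct exponent. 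This BMO boost is the crux of the inductive step and does not appear in your plan.

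Second, for $f_n$ your ``repeat the argument with end-blocks'' does not close: $\log|f_n^t|$ equals $-\infty$ at the zeros of $f_n^t$, so the BMO/John--Nirenberg route is unavailable. The paper instead uses the avalanche expansion with end-blocks to get a \emph{lower} bound $\log|f_n(x)|\ge nL_n-n^{1-\nu/4}$ off a small set (cf.\ (\ref{eq4.4f})), pairs this with the upper bound from Corollary~\ref{cor:logupper}, and then applies Cartan's estimate (Lemma~\ref{lem:high_cart}) to the analytic function $f_n^t$ on small disks; this is what produces (\ref{ldt-fn-ni1}) and explains the weaker exponent $\nu/15$.

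Third, your plan to apply Proposition~\ref{sbet} with the subsequence frequency $m\omega$ is flawed: one has $m\omega\in\cD_{cm^{-A},A}$, and the constants in Proposition~\ref{sbet} depend on the Diophantine constant. Since $m=n_i$ grows unboundedly through the induction, the loss $m^A$ cannot be ``absorbed into $n_0$''. The paper sidesteps this via a shifting trick: averaging the avalanche identity over shifts $k=0,\ldots,l-1$ converts the block sums $\sum_j\log\|M_l(x+jl\omega)\|$ into genuine Birkhoff sums $\sum_j\frac{1}{l}\log\|M_l^t(x+j\omega)\|$ with step $\omega$ and $\sim n$ terms, to which Proposition~\ref{sbet} applies with the fixed Diophantine data.
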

\begin{proof}We suppress $E$ from the notations for ease in this proof.
 Let $l=n_i$  and $n\in \left[ n_i^{2B},n_i^{5B^2}\right]$. Then,  $n=l+\left(m-2\right)l+l'$ with $2l\le l'\le3l$.
Set $A_{j}\left(x\right)=M_{l}\left(x+\left(j-1\right)l\omega\right)$, $j=1,\ldots,m-1$, and $A_{m}\left(x\right)=M_{l'}$. Then, due to (\ref{ldt-ni}), there exists a set  $\cG_i$ satisfying
$$meas(\mbt \backslash \cG_i)\le 4(m+2)\cdot \exp\left(-l^\frac{\nu}{3}\right)\le\exp\left(-l^\frac{\nu}{4}\right)$$
such that for any $x\in\cG_i$,
\[\|A_j(x)\|>\exp\left(\frac{9}{10}l\log\lambda\right),\ \|A_m(x)\|>\exp\left(\frac{9}{10}l'\log\lambda\right),\]
\[\big |\log\|A_j(x)\|+\log\|A_{j+1}(x)\|-\log\|A_{j+1}(x)A_j(x)\|\big |\leq \frac{3}{25}l\log\lambda,\]
and
\[\big |\log\|A_{m-1}(x)\|+\log\|A_m(x)\|-\log\|A_{m}(x)A_{m-1}\|\big |\leq \frac{8}{25}l\log\lambda.\]
Now the hypothesis of Avalanche Principle
are satisfied and hence
\begin{equation}\label{apforunu}
\log\left\Vert M_{n}\left(x\right)\right\Vert +\sum_{j=2}^{m-1}\log\left\Vert A_{j}\left(x\right)\right\Vert -\sum_{j=1}^{m-1}\log\left\Vert A_{j+1}\left(x\right)A_{j}\left(x\right)\right\Vert =O\left(\frac{1}{l}\right)
\end{equation}
up to a set of measure less than $\exp\left(-l^\frac{\nu}{4}\right)$.
If we set
\[
u_{0}\left(x\right)=\log\left\Vert A_{m}\left(x\right)A_{m-1}\left(x\right)\right\Vert +\log\left\Vert A_{2}\left(x\right)A_{1}\left(x\right)\right\Vert,
\]
then  the previous relation can be rewritten as
\[
\log\left\Vert M_{n}\left(x\right)\right\Vert +\sum_{j=2}^{m-1}\log\left\Vert M_{l}\left(x+\left(j-1\right)l\omega\right)\right\Vert \\
-\sum_{j=2}^{m-2}\log\left\Vert M_{2l}\left(x+\left(j-1\right)l\omega\right)\right\Vert -u_{0}\left(x\right)=O\left(\frac{1}{l}\right).
\]
Similarly, for any $0\le k<l-1$,
\[
\log\left\Vert M_{n}\left(x\right)\right\Vert +\sum_{j=2}^{m-1}\log\left\Vert M_{l}\left(x+k\omega+\left(j-1\right)l\omega\right)\right\Vert \\
-\sum_{j=2}^{m-2}\log\left\Vert M_{2l}\left(x+k\omega+\left(j-1\right)l\omega\right)\right\Vert -u_{k}\left(x\right)=O\left(\frac{1}{l}\right),
\]
where
\[u_k(x)=\log\left\| M_{l'-k}\left(x+k\omega+(m-1)l\omega\right)\cdot A_{m-1}(x+k\omega)  \right \|+\log\left\|A_2(x+k\omega)\cdot M_{l+k}\left(x\right)\right\| ,\]
which means that we
decrease the length of $A_{m}$ by $k$ and increase the length of
$A_{1}$ by $k$. Adding these equations and dividing by $l$
yield
\[
\log\left\Vert M_{n}\left(x\right)\right\Vert +\sum_{j=l}^{\left(m-1\right)l-1}\frac{1}{l}\log\left\Vert M_{l}\left(x+j\omega\right)\right\Vert -\sum_{j=l}^{\left(m-2\right)l-1}\frac{1}{l}\log\left\Vert M_{2l}\left(x+j\omega\right)\right\Vert
-\sum_{k=0}^{l-1}\frac{1}{l}u_{k}\left(x\right)=O\left(\frac{1}{l}\right)
\]
up to a set of measure less than $\exp\left(-l^\frac{\nu}{5}\right)$. What we have done is to obtain the Dirichlet sums,  $\displaystyle \sum_{j=l}^{\left(m-1\right)l-1}\frac{1}{l}\log\left\Vert M_{l}\left(x+j\omega\right)\right\Vert$ and $\displaystyle \sum_{j=l}^{\left(m-2\right)l-1}\frac{1}{l}\log\left\Vert M_{2l}\left(x+j\omega\right)\right\Vert$.
 However, Proposition \ref{sbet} can not be applied to them, since them are not subharmonic. So, we need to change them by the subharmonic truncation functions, $\displaystyle \sum_{j=l}^{\left(m-1\right)l-1}\frac{1}{l}\log\left\Vert M_{l}^{t}\left(x+j\omega\right)\right\Vert$ and $\displaystyle \sum_{j=l}^{\left(m-2\right)l-1}\frac{1}{l}\log\left\Vert M_{2l}^{t}\left(x+j\omega\right)\right\Vert$. Due to (\ref{dis-un-unt}),
\[\left|\sum_{j=l}^{\left(m-1\right)l-1}\frac{1}{l}\log\left\Vert M_{l}\left(x+j\omega\right)\right\Vert-\sum_{j=l}^{\left(m-1\right)l-1}\frac{1}{l}\log\left\Vert M_{l}^{t}\left(x+j\omega\right)\right\Vert\right|<n\exp(-l)\ll \frac{1}{l},\]
\[ \left|\sum_{j=l}^{\left(m-2\right)l-1}\frac{1}{l}\log\left\Vert M_{2l}\left(x+j\omega\right)\right\Vert-\sum_{j=l}^{\left(m-2\right)l-1}\frac{1}{l}\log\left\Vert M_{2l}^{t}\left(x+j\omega\right)\right\Vert\right|<n\exp(-l)\ll \frac{1}{l},\]
Obviously, the similar relationships between $u_k$ and $u_k^t$ also hold.
Hence,
\[
\log\left\Vert M_{n}\left(x\right)\right\Vert +\sum_{j=l}^{\left(m-1\right)l-1}\frac{1}{l}\log\left\Vert M_{l}^{t}\left(x+j\omega\right)\right\Vert -\sum_{j=l}^{\left(m-2\right)l-1}\frac{1}{l}\log\left\Vert M_{2l}^{t}\left(x+j\omega\right)\right\Vert
-\sum_{k=0}^{l-1}\frac{1}{l}u_{k}^t\left(x\right)=O\left(\frac{1}{l}\right)
\]
up to this set.
Recall that $\frac{1}{l}\log\left\Vert M_{l}^{t}\left(x\right)\right\Vert$ is a subharmonic function on $\TT_l$ with the maximum $S$ and $ml\sim n$. So, Proposition \ref{sbet} can be applied with the smallest deviation $\frac{3CS}{\rho}l^{2(s-1)}n^{-\nu}$ and we obtain that
\[
\sum_{j=l}^{\left(m-1\right)l-1}\frac{1}{l}\log\left\Vert M_{l}^{t}\left(x+j\omega\right)\right\Vert -\sum_{j=l}^{\left(m-2\right)l-1}\frac{1}{l}\log\left\Vert M_{2l}^{t}\left(x+j\omega\right)\right\Vert
=\left(m-2\right)lL_{l}^{t}-\left(m-3\right)lL_{2l}^{t}+O\left(l^{2s}n^{1-\nu}\right)
\]
up to a set of measure less than $2\exp\left(-cn^{1-\nu}\right)$. Note that $u^t_{k}$, $k=0,\ldots,l-1$ also have the subharmonic extensions and Proposition \ref{sbet} can be applied for these functions with $n=1$ and $\delta=l^{2s}n^{1-\nu}\gg 1$. So,
\[
\sum_{k=0}^{l-1}\frac{1}{l}u^t_{k}\left(x\right)-\sum_{k=0}^{l-1}\frac{1}{l}\left\langle u^t_{k}\right\rangle =O\left(l^{2s}n^{1-\nu}\right)
\]
up to a set of measure less than $l\exp\left(- cn^{1-\nu}\right)$. Thus, combining  these equations, we have that
\begin{equation}\label{M_impure_AP2}
\log\left\Vert M_{n}\left(x\right)\right\Vert+\left(m-2\right)lL_{l}^{t}-\left(m-3\right)lL_{2l}^{t}-\sum_{k=0}^{l-1}\frac{1}{l}\left\langle u^t_{k}\right\rangle =O\left(l^{2s}n^{1-\nu}\right)
\end{equation}
up to a set of measure less than $\exp\left(-l^\frac{\nu}{5}\right)+2\exp\left(-cn^{1-\nu}\right)+l\exp\left(-cn^{1-\nu}\right)$. Note that for $n>n_0$,
\begin{equation}\label{G-inequ}
  \exp\left(-l^\frac{\nu}{5}\right)+2\exp\left(-cn^{1-\nu}\right)+l\exp\left(- cn^{1-\nu}\right)\ll n^{-10s}\ \mbox{and}\ l^{2s}n^{1-\nu}\le n^{1-\frac{3\nu}{4}}
\end{equation}
Integrating (\ref{M_impure_AP2}) , yields
\begin{equation}\label{ap-nln}
  nL_n+\left(m-2\right)lL_{l}^{t}-\left(m-3\right)lL_{2l}^{t}-\sum_{k=0}^{l-1}\frac{1}{l}\left\langle u^t_{k}\right\rangle =O\left(n^{1-\frac{3\nu}{4}}\right).
\end{equation}
Hence, we obtain (\ref{low-up-bound-le-ni1}) by (\ref{ap-nln}) and (\ref{low-up-bound-le-ni}).

Combining (\ref{M_impure_AP2}),  (\ref{ap-nln}) and (\ref{dis-un-unt}), we have
\begin{equation}
\left|\log\left\Vert M^t_{n}\left(x\right)\right\Vert-nL^t_n \right|=O\left(n^{1-\frac{3\nu}{4}}\right)\label{f^t-avg-bound2}
\end{equation}
up to a set of measure less than $n^{-10s}$. Let $\cB$ be this exceptional set  and define
\[
\log\left\Vert M_{n}^{t}\left(x\right)\right\Vert-nL_n^t  =u_{0}+u_{1}
\]
 where $u_{0}=0$ on $\cB$ and $u_{1}=0$ on $\TT\setminus\cB$.
Obviously,  $\left\Vert u_{0}-\left\langle u_{0}\right\rangle \right\Vert _{L^{\infty}\left(\TT\right)}=O\left(n^{1-\frac{3\nu}{4}}\right)$
\[
\left\Vert u_{1}\right\Vert _{L^{2}\left(\TT\right)}\le n^{-3s}.
\]
By Lemma \ref{bgsbmonorm} and choosing $S=n^{2s}$ to make $\rho$ be uniform, we obtain that
\[
\left\Vert\log\left\Vert M_{n}^{t}\left(x\right)\right\Vert\right\Vert _{BMO\left(\TT\right)}=O(n^{1-\frac{\nu}{2}}).
\]
Thus, Lemma \ref{jn-ineq} implies us that for any $n>n_0$ and any $\delta>0$,
\begin{equation}\label{ind-ldt-fnt}\mes\left\{ x\in\mathbb{T}:\,\left|\log\left\Vert M_{n}^{t}\left(x\right)\right\Vert-nL_n^t \right|>n\delta\right\} \le C\exp\left(-c\delta n^{\frac{\nu}{2}}\right).
\end{equation}
Hence, we get (\ref{ldt-n-i1}) by (\ref{dis-un-unt}).

Like (\ref{eq:fN-AP}), due to (\ref{ldt-fn-ni}) and (\ref{ldt-ni}), we can also apply the avalanche principle to expand $|f_n(x,E)|$. Combining this with (\ref{apforunu}), we obtain that
 \begin{multline}\label{eq4.4f}
    \log |f_n(x)|=\log \| M_n (x) \|
    + \log\left\|M_{2\ell}(x)\begin{bmatrix} 1 & 0\\ 0 &
        0\end{bmatrix}\right\|-\log \|M_{2\ell}(x)\|\\
    +\log\left\|\begin{bmatrix} 1 & 0\\ 0 &
        0\end{bmatrix}M_{l+l'}(x+(m-2)l\omega)\right\|
      -\log\|M_{l+l'}(x+(m-2)l\omega)\|
    + O(1/l)\\
    \ge \log \| M_n (x) \|-l'\log\lambda \ge
    nL_n-n^{1-\frac{\nu}{4}}
  \end{multline}
 up to a set of measure less than $\exp\left(-l^\frac{\nu}{40}\right)$. In particular,
  for any $x_0\in \TT$ there exists $ x_1\in \TT $, $|x_1-x_0|\le\exp\left(-l^\frac{\nu}{2}\right) \ll
  \rho_n $  such that
  $\log |f_n^t(x_1)|\ge nL_n-n^{1-\frac{\nu}{4}}$. On the other hand, for the subharmonic function $\log\|M_n^t\|$, its ldt (\ref{ind-ldt-fnt}) can be extended to on the strip $\TT_n:=\{z:|\Im z|\le \rho_n\}$. So,  Corollary \ref{cor:logupper} can be applied and (\ref{sup-bound-mnt}) holds. Due to (\ref{eq:fN-MN}),  we have
\begin{equation}
  \sup_{x\in \TT,|y|\le n^{-4s}}\log |f^t_n(x+iy)|\le  n L_n+n^{1-\frac{\nu}{10}}.
\end{equation}
Applying Cartan’s estimate for $f_n^t$ and  using a covering argument, we obtain that for any $H\gg 1$,
\begin{equation}
  \mes\left\{ x\in\mathbb{T}:\,\left|\log\left\Vert f_{n}^{t}\left(x\right)\right\Vert-nL_n \right|>CHn^{1-\frac{\nu}{10}}\right\} \le C\exp\left(-H\right).
\end{equation}
Combining it with (\ref{dis-un-unt}) and (\ref{Mnaxdet}), we obtain (\ref{ldt-fn-ni1}) and finish this proof.
\end{proof}

\subsection{The proof of Proposition \ref{ldt-thm}}
\begin{proof}[The proof of Proposition \ref{ldt-thm}] (\ref{ln0}), (\ref{ldt-mn-0}) and (\ref{ldt-fn-0})  make the assumptions of Lemma \ref{ind-step} hold for $n_0=(\log\lambda)^C$ with $\lambda>\lambda_0$ and small constant $\nu$. Hence, (\ref{low-up-bound-le-ni1}),  (\ref{ldt-n-i1}) and (\ref{ldt-fn-ni1}) hold for any $n\in \left[ n_0^{2B}, n_0^{5B^2}\right]$.  From now on, we choose $n_{i+1}=\left[ n_i^{2B}\right]+1$ for any $i\ge 1$. Then, due to Lemma \ref{ind-step} and the induction, we obtain that (\ref{low-up-bound-le-ni1}),  (\ref{ldt-n-i1}) and (\ref{ldt-fn-ni1}) hold for any $n\in \bigcup\limits_{i=0}^{\infty}\left[n_i^{2B}, n_i^{5B^2}\right]$. It is obvious that $n_i^{5B^2}>n_{i+1}^{2B}$. Thus, (\ref{low-up-bound-le-ni1}), (\ref{low-up-bound-le-ni1}),  (\ref{ldt-n-i1}) and (\ref{ldt-fn-ni1}) hold for any $n>n_1$.  Redefining $\nu$ and $n_0$, we obtain (\ref{posi-le-g}), (\ref{ldt-ung}) and (\ref{ldt-fng}).

At last, we apply the semialgebraic set theory to obtain the number of the intervals. Due to the semialgebraic sets theory(Subsection \ref{sec:semi-set}), there exist $v's$
truncation polynomial $\tilde v_n$ of degree less than $Cn^{4s}$, satisfying
\[ \sup_{x\in\TT}\left|\tilde v_n-v\right|\le \exp\left(-n^2\right).\]
Define the set
\[\tilde \cQ^t(x)=\left\{x:\left|\frac{1}{n}\log|\tilde f^t_{n}(x,E)|-L_{n}(E)\right|>\frac{3}{2}\delta
\right\},\]
where $\tilde f^t_{n}(x,E)=\det\left(\tilde H^t_n(x)-E\right)$ and $\tilde H^t_n(x)$ is the corresponding operators with the potential $\lambda \tilde v_n$. Obviously, $\tilde f^t_{n}(x,E)$ is a  polynomial of degree less than $Cn^{5s}$. Let
\[\cQ_1(x)=\left\{x:\left|\frac{1}{n}\log| f_{n}(x,E)|-L_{n}(E)\right|>n\delta
\right\},\]
and
\[\cQ_2(x)=\left\{x:\left|\frac{1}{n}\log| f_{n}(x,E)|-L_{n}(E)\right|>2n\delta
\right\}.\]
 it yields that
\[ \cQ_2\subset \cQ^t \subset \cQ_1.\]
Therefore, we finish this proof by applying Corollary \ref{number} with $d=1$.
\end{proof}
\begin{remark}\label{sup-bound-fnt}It is obvious that by the proofs of Proposition \ref{ldt-thm} and Corollary \ref{cor:logupper}, we obtain a better upper bound for $n>n_0$:
\begin{equation}\label{sup-bound-mn}
\sup_{x\in \TT}\log |f_n(x,E)|, \  \sup_{x\in \TT}\log\|M_n(x,E)\|,\  \sup_{x\in \TT,|y|\le n^{-4s}}\log |f^t_n(x+iy,E)|\leq n L_n(E)+Cn^{1-\nu}.\end{equation}
Actually, it also holds for $1\le n\le n_0$. Recall that for any $\lambda>\lambda_0$, $1\le n\le n_0$,
\[ |L_n(E)-\log\lambda|\lesssim (\log\lambda)^{\frac{1}{2}}, \ \ \ \ \sup_{x\in \TT}\frac{1}{n}\log\|M_n(x,E)\|\le \log\lambda+(\log\lambda)^{\frac{1}{2}}.\]
Hence, by choosing $C\sim \log\lambda$ and recalling $n_0=(\log \lambda)^{\tilde C}$, we obtain that  (\ref{sup-bound-mn}) holds for any $n\ge 1$.
\end{remark}

\section{Green Function, Wegner's estimate and spectrum criterion}
In this section, we are mainly to introduce  some common tools in the spectral theory, such as Green function, Wenger's estimate and spectrum criterion,  and apply the obtained LDTs to them to get the desired lemmas.
From now on, unless specified otherwise, we always assume $\omega\in\cD(c,A)$, $\lambda>\lambda_0$ and $n>n_0$.

\subsection{Green Function}
In this subsection, we present the key tools  to link eigenfunctions of the finite volume operators to (generalized) eigenfunctions of a large volume or in infinite volume. They are  the {\em Poisson formula} in terms of Green's function and  a bound on the off-diagonal terms of Green's function in terms of the deviations estimate for the determinant $f_n(x,E)$. It says that for any solution of the difference equation $ H(x)\phi=E\phi $, we have
\beeq \label{Poisson}
	\phi(m) = (H_{[a,b]}(x)-E)^{-1}(m, a)\phi(a-1) + (H_{[a,b]}(x)-E)^{-1}(m,b)\phi(b+1),\quad m \in [a, b].
\eneq
Easy computations show that (\ref{Poisson}) also holds if $\phi$ satisfies $H_{[c,d]}(x)\phi=E\phi $ and $[a,b]\subseteq [c,d]$. We denote this Green's function by $\cG_{[a,b]}(x,E):=\left(H_{[a,b]}(x)-E\right)^{-1}$ or $\cG_{n}(x,E):=\left(H_{n}(x)-E\right)^{-1}$. Due to Cramer's rule,
\begin{equation}
  \label{green-function}
 \cG_{n}(x,E)(k,m)=\frac{f_{k-1}(x,E)f_{n-(m+1)}(z+(m+1)\omega,E)}{f_n(x,E)}.
\end{equation}
This method was introduced into the theory of localized eigenfunctions in the fundamental work on the Anderson model by Fr\"ohlich and Spencer. Now it will help us address the relationship between the distance of an energy to the spectrum and the deviation of $f_n(x,E)$ to the Lyapunov exponent.
\begin{lemma}\label{lem:Green}  If $
    \log \big | f_n(x,E) \big | > nL_n(E) - J$, then
  \[
    \big | \cG_{[1, n]} (x,E)(j,k)\big |  \le  \exp\left(-\frac{\log\lambda}{2}|k - j|+J+Cn^{1-\nu}\right), \]
  \[\dist\left(E,\spec H_n(x)\right)\ge \exp\left(-J-Cn^{1-\nu}\right).\]
\end{lemma}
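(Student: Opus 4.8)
The plan is to reduce both assertions to Cramer's rule (\ref{green-function}) together with the sup-bounds of Remark~\ref{sup-bound-fnt} and the quantitative closeness of the finite-scale Lyapunov exponents to $L(E,\omega)$. Since $H_n(x)$ is real symmetric, $\cG_n(x,E)(j,k)=\cG_n(x,E)(k,j)$, so we may assume $j\le k$, and by (\ref{green-function}), up to $O(1)$ shifts in the lengths of the two blocks,
\[
\bigl|\cG_{[1,n]}(x,E)(j,k)\bigr|\le\frac{|f_{j-1}(x,E)|\cdot|f_{n-k}(x+k\omega,E)|}{|f_n(x,E)|}.
\]

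First I would bound the numerator. By Remark~\ref{sup-bound-fnt} (its bound (\ref{sup-bound-mn}) holds at every scale $\ge 1$, with the $\lambda$-dependence of the constant confined to the short scales $\le n_0=(\log\lambda)^{\tilde C}$), one has $\log|f_{j-1}(x,E)|\le(j-1)L_{j-1}(E)+C(j-1)^{1-\nu}$ and $\log|f_{n-k}(x+k\omega,E)|\le(n-k)L_{n-k}(E)+C(n-k)^{1-\nu}$, while the hypothesis gives $\log|f_n(x,E)|>nL_n(E)-J$. Since $(j-1)^{1-\nu}+(n-k)^{1-\nu}\le 2n^{1-\nu}$ this yields
\[
\log\bigl|\cG_{[1,n]}(x,E)(j,k)\bigr|<(j-1)L_{j-1}(E)+(n-k)L_{n-k}(E)-nL_n(E)+J+Cn^{1-\nu}.
\]
The key point is that the crude bound $|L_m(E)-\log\lambda|\lesssim(\log\lambda)^{1/2}$ is useless here, since it is multiplied by the full length $n$; instead one must use the rate of convergence $|L_m(E)-L(E,\omega)|\lesssim m^{-\nu}\log\lambda$, uniform in $E$ and in $m\ge n_0$, which is the standard consequence of the avalanche-principle identity (\ref{ap-nln}) and the bounds (\ref{low-up-bound-le-ni1}) of Section~3 (telescope along the doubling sequence $n_{i+1}\approx n_i^{2B}$ and sum the geometric series; the finitely many scales $\le n_0$ contribute at most $C(\log\lambda)^{O(1)}\ll n^{1-\nu}$). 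Substituting $L_m(E)=L(E,\omega)+O(m^{-\nu}\log\lambda)$ at the three scales $j-1$, $n-k$, $n$, the combination telescopes:
\[
(j-1)L_{j-1}(E)+(n-k)L_{n-k}(E)-nL_n(E)=\bigl[(j-1)+(n-k)-n\bigr]L(E,\omega)+O\!\left(n^{1-\nu}\log\lambda\right)=-(|k-j|+O(1))\,L(E,\omega)+O\!\left(n^{1-\nu}\log\lambda\right).
\]
Because $\lambda>\lambda_0$, (\ref{posi-le-g}) gives $L(E,\omega)\ge\log\lambda-C(\log\lambda)^{1/2}>\tfrac12\log\lambda$, so the leading term is $\le-\tfrac12(\log\lambda)|k-j|$; absorbing $\log\lambda$ into $C$ (or shrinking $\nu$ slightly, as done repeatedly in Section~3) gives the first assertion.

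For the spectral estimate, I would use that $H_n(x)$ is a finite real symmetric matrix, so $\dist(E,\spec H_n(x))=\|\cG_n(x,E)\|^{-1}$, and bound the operator norm by the entrywise estimate just proved:
\[
\|\cG_n(x,E)\|\le\sum_{j,k}\bigl|\cG_n(x,E)(j,k)\bigr|\le e^{J+Cn^{1-\nu}}\sum_{j}\sum_{k}e^{-\frac{\log\lambda}{2}|k-j|}\le C\frac{n}{\log\lambda}e^{J+Cn^{1-\nu}}\le e^{J+Cn^{1-\nu}},
\]
the last step absorbing the polynomial factor since $n>n_0$ and $\log n\ll n^{1-\nu}$; inverting gives $\dist(E,\spec H_n(x))\ge e^{-J-Cn^{1-\nu}}$.

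The main obstacle is precisely the middle step: the proof genuinely needs the quantitative rate $|L_m(E)-L(E,\omega)|\lesssim m^{-\nu}\log\lambda$ uniformly, because without it the linear-in-$n$ slack from $|L_m-\log\lambda|\lesssim(\log\lambda)^{1/2}$ destroys the bound; extracting (or carefully citing) this rate from the avalanche-principle estimates of Section~3 is where the real work lies, together with the minor bookkeeping of the $O(1)$ length shifts in (\ref{green-function}) and of the few short scales $\le n_0$ at which the constants still depend on $\lambda$.
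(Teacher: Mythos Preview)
Your proof is correct and follows exactly the same skeleton as the paper's: Cramer's rule~(\ref{green-function}), the sup-bound~(\ref{sup-bound-mn}), and then the operator-norm bound on the resolvent to get the spectral distance. The paper's own proof is a two-line computation that quotes only $L_j(E)\le\log\lambda+(\log\lambda)^{1/2}$ for the numerator blocks and the hypothesis for the denominator; it does not explicitly mention the rate $|L_m(E)-L(E,\omega)|\lesssim m^{-\nu}\log\lambda$.

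Your diagnosis of the ``main obstacle'' is accurate: taken literally, the bound $L_j\le S=\log\lambda+(\log\lambda)^{1/2}$ alone leaves an uncontrolled term $n\bigl(S-L_n(E)\bigr)$ of order $n(\log\lambda)^{1/2}$, which is linear in $n$ and cannot be absorbed into $Cn^{1-\nu}$ when $|j-k|\ll n/(\log\lambda)^{1/2}$. The paper's terse line is therefore glossing over precisely the point you isolate. Your remedy---replacing each $L_m$ by $L(E,\omega)+O(m^{-\nu}\log\lambda)$ and telescoping---is the honest way to close the argument, and it is indeed a consequence of the avalanche-principle recursion (\ref{ap-nln}) iterated along the scales $n_i$ of Section~3. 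So your route is not a different approach but rather the argument the paper's proof tacitly relies on; what you have written is the fleshed-out version of their one-line claim.
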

\begin{proof}
Due to (\ref{green-function}), (\ref{sup-bound-mn}) and the setting $\lambda>\lambda_0$ which makes $L_j(E)\le \log\lambda+(\log\lambda)^{\frac{1}{2}}$ for any $j$, it yields that
    \begin{eqnarray*}
    \left| \cG_{n}(x,E)(k,m)\right|&=&\frac{|f_{k-1}(x,E)|\cdot|f_{n-(m+1)}(x+(m+1)\omega,E)|}{|f_n(x,E)|}\\
    &\le &\exp\left(-\frac{\log\lambda}{2}|k-m|+J+Cn^{1-\nu}\right).
    \end{eqnarray*}
  Thus,
\[\|\cG_{n}(x,E)\|\le \exp\left(J+Cn^{1-\nu}\right),\]
which implies that
\[\dist\left(E,\spec H_n(x)\right)=\left\|\cG_{n}(x,E)\right\|^{-1}\ge \exp\left(-J-Cn^{1-\nu}\right).\]
\end{proof}

The above lemma shows that if $f_n(x,E)$ is closed to $L_n(E)$, then the Green function $ \cG_{n}(x,E)$ decays well, and the distance between $E$ and  the spectrum $\spec H_n(x)$ has an lower bound. Naturally, we also want to know what will happen when $f_n(x,E)$ is far away from $L_n(E)$.
\begin{lemma}\label{fna-small}
 If $\log |f_n(x,E)|\le nL_n(E)-J n^{1-\frac{\nu}{2}},$ then
there exists a constant $C(v) $ such that
\[\dist\left(E,\spec H_n(x)\right) <C\exp\left(-\left(J+\frac{1}{2}n^{\frac{\nu}{4}}\right)\right).\]
\end{lemma}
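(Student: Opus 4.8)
The plan is to turn the smallness of $f_n(x,E)$ into an upper bound on $\dist(E,\spec H_n(x))$ by exhibiting an almost‑eigenvector of $H_n(x)$ with small defect and not‑too‑small norm. Let $\phi$ solve $H(x)\phi=E\phi$ with $\phi(0)=0$, $\phi(1)=1$; by the transfer‑matrix recursion $\phi(k)=\pm f_{k-1}(x,E)$ for $k\ge 1$ and $\phi(n+1)=\pm f_n(x,E)$, and since $\phi(0)=0$ the restriction $\phi|_{[1,n]}$ satisfies $(H_n(x)-E)\phi|_{[1,n]}=-\phi(n+1)e_n$. Hence $\dist(E,\spec H_n(x))\le |f_n(x,E)|\big/\bigl(\sum_{j=0}^{n-1}|f_j(x,E)|^2\bigr)^{1/2}$, and, more flexibly, Cramer's rule (\ref{green-function}) for $\cG_n(x,E)(m,m)$ together with $\|\cG_n(x,E)\|=\dist(E,\spec H_n(x))^{-1}$ gives
\[
\dist(E,\spec H_n(x))\;\le\;\frac{|f_n(x,E)|}{|f_{m-1}(x,E)|\,|f_{n-m-1}(x+(m+1)\omega,E)|},\qquad 1\le m\le n-1 .
\]
So it suffices to exhibit one site $m$ at which the denominator is at least $C^{-1}\exp\!\bigl(nL_n(E)-\tfrac12 n^{\nu/4}\bigr)$; feeding this into the display together with the hypothesis $\log|f_n(x,E)|\le nL_n(E)-Jn^{1-\nu/2}$ then closes the estimate, since $1-\tfrac{\nu}{2}>\tfrac{\nu}{4}$ and bounded factors are absorbed into $C(v)$.

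\textbf{Producing the split site.} I would let $m$ run over a window of small polynomial length $\asymp\ell$ (with $n_0<\ell$) near one end, so that one of the two factors lives on a scale $\asymp\ell$ and the other, say $f_{n-m-1}(x+(m+1)\omega,E)$, on a scale $\asymp n$ at the moving phase $x+(m+1)\omega$. For the scale‑$n$ factor one invokes (\ref{ldt-fng}) with an admissible deviation $\delta\gg n^{-\nu}$: by the last assertion of Proposition \ref{ldt-thm} (via Corollary \ref{number}), the phases at which this determinant drops below $\exp\bigl((n-m-1)L_{n-m-1}-Cn^{1-\nu}\bigr)$ form a union of at most $n^{C_0 s}$ intervals of total length $\le\exp(-cn^{\nu}\delta)$, and since nearby scales yield comparable bad sets these can all be absorbed into a single such set. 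The Diophantine condition $\omega\in\cD_{c,A}$ makes the $\asymp\ell$ phases $\{x+(m+1)\omega\}$ in the window $c\ell^{-A}$‑separated, so a counting argument places some $m$ in the window outside all the resonant intervals, provided $\ell$ is chosen a sufficiently large power of $n$ and $\delta$ slightly above $n^{-\nu}$. For that $m$ the remaining small‑scale factor is estimated crudely, costing only $O(\ell\log\lambda)$, and, using $|L_k-L_n|$ small, the denominator is $\ge C^{-1}\exp\!\bigl(nL_n(E)+O(n^{1-\nu}+\ell\log\lambda)\bigr)$.

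\textbf{Conclusion and the main obstacle.} Combining the two steps gives $\dist(E,\spec H_n(x))\le C\exp\!\bigl(-Jn^{1-\nu/2}+(\text{lower‑order errors})\bigr)$, which for $n$ large and $J\ge 1$ is at most $C(v)\exp\!\bigl(-(J+\tfrac12 n^{\nu/4})\bigr)$. The step I expect to be the main obstacle is exactly the one just sketched: producing a good split site for the fixed and \emph{atypical} phase $x$ while keeping all accumulated errors below the sharp exponent $n^{\nu/4}$. The large deviation theorem controls scale‑$n$ determinants only down to deviation $\gtrsim n^{-\nu}$, so the bare bound carries an intrinsic $n^{1-\nu}$ error that must be shown dominated by the hypothesis's $n^{1-\nu/2}$ deficit; and the averaging window $\ell$ must simultaneously be long enough to beat the $n^{C_0 s}$ component count of the exceptional set (which is why the interval‑count refinement of Proposition \ref{ldt-thm}, not the bare large deviation bound, is what one must use) and short enough that $\ell\log\lambda$ does not spoil the comparison $|L_k-L_n|$ — so that, depending on the size of the numerical constant $C_0 s$ relative to $\nu=\nu(A)$, one may have to iterate the splitting over a short hierarchy of scales, or else locate the resonant energies directly from the zeros of the polynomial $f_n(x,\cdot)$ rather than from phase shifts.
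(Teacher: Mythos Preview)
Your route is genuinely different from the paper's, and the obstacle you flag at the end is real --- in the Gevrey setting it is not clear it can be closed.

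The paper never splits the Green function. It argues in the complex phase variable: by the LDT (\ref{ldt-fng}) with $\delta\sim n^{-3\nu/4}$ there is some $x'$ with $|x'-x|<\exp(-n^{\nu/4})$ at which $\log|f_n(x',E)|>nL_n(E)-n^{1-3\nu/4}$. One then passes to the analytic truncation $f_n^t$ (which differs from $f_n$ by $O(e^{-n})$ and is holomorphic on a strip of width $\gg\exp(-n^{\nu/4})$), rescales the segment from $x$ to $x'$ into a unit disk, and applies Cartan's estimate (Lemma~\ref{lem:high_cart}) with $M-m\lesssim n^{1-3\nu/4}$. Choosing the Cartan parameter $H$ so that $CHn^{1-3\nu/4}<Jn^{1-\nu/2}$, i.e.\ $H\asymp Jn^{\nu/4}$, forces the point $z=0$ (which corresponds to $x$) into one of the Cartan disks, and each such disk contains an actual zero of $f_n^t(\cdot,E)$. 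Undoing the rescaling puts that zero $z''$ within $\exp\bigl(-(H+n^{\nu/4})\bigr)$ of $x$, so $E\in\spec H_n^t(z'')$; Lipschitz continuity of eigenvalues in the phase and $\|H_n-H_n^t\|\le e^{-n}$ then finish. No split site, no counting, no window.

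In your scheme two concrete things are missing. First, the ``crude'' lower bound on $|f_{m-1}(x,E)|$ is not free: at the fixed atypical phase $x$ this determinant can vanish. One can rescue $\max(|f_{m-1}(x,E)|,|f_{m-2}(x,E)|)\ge\|M_{m-1}(x,E)\|^{-1}\ge\exp(-O(m\log\lambda))$ from $\det M_{m-1}=1$ together with (\ref{sup-bound-mn}), but then the $m$ you selected for the large factor must also have a good neighbour, which feeds back into the counting. Second --- and this is your stated obstacle --- the interval count $n^{C_0 s}$ in Proposition~\ref{ldt-thm} comes from semialgebraic approximation of a Gevrey-$s$ function and carries an exponent $C_0 s$ with $C_0=C_0(v)$ entirely decoupled from $\nu=\nu(A)$; nothing in the paper forces $C_0 s<1-\nu/2$, so the required window $\ell>n^{C_0 s}$ may make the cost $\ell\log\lambda$ swamp the deficit $Jn^{1-\nu/2}$. (The side claim that the bad sets for $f_{n-m-1}$ at the $\ell$ nearby scales ``absorb into one'' also needs an argument; taken literally the union has $\ell n^{C_0 s}$ intervals, not $n^{C_0 s}$, and then the pigeonhole fails outright.) Your proposed fixes --- a multi-scale iteration, or locating resonances from the zeros of $f_n(x,\cdot)$ --- are separate proofs; the latter is essentially a Cartan argument in the energy variable. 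The paper's Cartan-in-phase argument avoids all of this precisely because it exploits the one structural gift of the Gevrey hypothesis: an analytic truncation $f_n^t$ to which Lemma~\ref{lem:high_cart} applies directly.
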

\begin{proof}Due to (\ref{ldt-fng}),
 \[\mes\left\{ x\in\mathbb{T}:\,\left|\log\left|f_{n}\left(x,E\right)\right|-L_n(E) \right|>n^{1-\frac{3\nu}{4}}\right\} \le \exp\left(-n^{\frac{\nu}{4}}\right).\]
 Thus, there exists $x'$ satisfying $|x'-x|<\exp\left(-n^{\frac{\nu}{4}}\right)$ such that
\begin{equation}\label{good-fng}
  \log |f_n(x',E)|>nL_n(E)-n^{1-\frac{3\nu}{4}}.
\end{equation}
Combining it with (\ref{dis-dif-eme}) and (\ref{dis-g-gn}), we have that
\begin{equation}\label{good-fng}
  \log |f^t_n(x',E)|>nL_n(E)-2n^{1-\frac{3\nu}{4}}.
\end{equation}
  Define
  \[\Psi(z)=f^t_n\left(x+\frac{10z}{\exp\left(n^{\frac{\nu}{4}} \right)|x'-x|}(x'-x),E\right),\]
  which is a complex analytic function on $\cD(0,1)$ by noting that $\exp\left(-n^{\frac{\nu}{4}}\right)\ll n^{-4s}$.
Let $z'$ be such that $\Psi(z')=f_n^t(x',E)$. Obviously, $|z'|\le \frac{1}{10}$. Due to  (\ref{sup-bound-mn}), we have that
\[\sup\limits_{x\in \TT,|y|<\exp\left(-n^{\frac{\nu}{4}}\right)}\log\|f_n^t(x+iy,E)\|\leq nL_n(E)+Cn^{1-\nu},\]
which means that $\sup\limits_{z\in \cD(0,1)}\log|\Psi(z)|<nL_n(E_0)+Cn^{1-\nu}.$
Due to the Cartan's estimate (\ref{eq:cart_bd}), we have that  there exists a set $\cB \subset \cD(0,1)$,  $\cB \in
\car_1\left(H, J\right)$, $J=CHn^{1-\frac{3\nu}{4}}$, such that
\[
  \log | \Psi(z)| >nL_n(E)-C Hn^{1-\frac{3\nu}{4}}
\]
for any $z \in \frac{1}{6} \cD(0,1)\setminus \cB$. It follows that $0\in D(z_j,r_j)\subset \cB$ with $r_j<\exp(-H)$ for some $j$, and there exists $z'\in D(z_j,r_j)$ such that $\Psi(z')=0$. Let $z''=x+\frac{10z'}{\exp\left(n^{\frac{\nu}{4}} \right)|x'-x|}(x'-x)$. Then, $E\in \spec H^t_n(z'')$ and $|z''-x|\le \exp\left(-\left(J+n^{\frac{\nu}{4}}\right)\right)$. Since $H^t_n$ is Hermitian,
\[\left\|H_n^t(z)-H_n^t(x)\right\|\le C|z-x|,\]
and that if
\[\left\|\left(H_n^t(x)-E\right)^{-1}\right\|\left\|H_n^t(z)-H_n^t(x)\right\|<1,\]
then $H_n^t(z)-E$ would be invertible. Hence, we have
\[\dist\left(E,\spec H^t_n(x)\right) <C\exp\left(-\left(J+n^{\frac{\nu}{4}}\right)\right).\]
At last, due to the fact that
\[ \sup_{x\in \TT} \left\|H^t_n(x)- H_n(x)\right\|\le \exp(-n),\]
we finish this proof.
\end{proof}

Its inverse negative proposition will play an important role in our later proof:
\begin{corollary}
  \label{spec-to-decay} If $
  \dist\left(E,\spec(H_n(x)\right)> C\exp\left(-\left(J+\frac{1}{2}n^{\frac{\nu}{4}}\right)\right)$,
	 then
\[\log\left|f_n(x,E)\right|> nL_n(E)-Jn^{1-\frac{\nu}{2}}.\]
\end{corollary}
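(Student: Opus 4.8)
The plan is entirely structural: the statement is the contrapositive of Lemma~\ref{fna-small}, so I would deduce it directly from that lemma without producing any new estimate. First I would suppose, toward a contradiction, that the asserted conclusion fails, i.e. that $\log|f_n(x,E)|\le nL_n(E)-Jn^{1-\frac{\nu}{2}}$. This is exactly the hypothesis of Lemma~\ref{fna-small} for the same $x$, $E$, $n$ and $J$. Applying that lemma then yields $\dist(E,\spec H_n(x))<C\exp\left(-\left(J+\tfrac12 n^{\frac{\nu}{4}}\right)\right)$ with the constant $C=C(v)$ furnished there. Since the corollary is phrased with the same constant $C$, this contradicts the standing hypothesis $\dist(E,\spec H_n(x))>C\exp\left(-\left(J+\tfrac12 n^{\frac{\nu}{4}}\right)\right)$ — indeed the assumed strict inequality is more than enough to rule out the bound just derived. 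Hence $\log|f_n(x,E)|>nL_n(E)-Jn^{1-\frac{\nu}{2}}$, which is the claim.

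There is no genuine obstacle here: all the analytic content — the large deviation estimate (\ref{ldt-fng}) used to locate a good nearby point, Cartan's estimate (Lemma~\ref{lem:high_cart}) applied to the analytic truncation $f_n^t$, the uniform sup-bound (\ref{sup-bound-mn}) on the strip $|y|\le n^{-4s}$, and the Hermitian perturbation bound comparing $\spec H_n^t(x)$ with $\spec H_n(x)$ — was already spent in the proof of Lemma~\ref{fna-small}. The only thing to be careful about is the bookkeeping of constants: one must ensure that the $C=C(v)$ appearing in the hypothesis of the corollary is literally the output constant of Lemma~\ref{fna-small}, so that the two inequalities on $\dist(E,\spec H_n(x))$ are exact negations of one another and the logical step closes cleanly; this is how the statement is written, so the matter is automatic. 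I would also note, for orientation, why this reformulation is the convenient one downstream: it converts the information ``$E$ is far from the spectrum of the $n$-box operator'' into ``$f_n(x,E)$ is close to $nL_n(E)$'', which is precisely the form needed to feed Lemma~\ref{lem:Green} and hence the Green's function decay and Wegner-type arguments of the subsequent subsections.
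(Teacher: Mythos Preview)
Your proposal is correct and matches the paper's approach: the paper introduces this corollary explicitly as the ``inverse negative proposition'' of Lemma~\ref{fna-small} and gives no separate argument, so the contrapositive deduction you describe is precisely what is intended.
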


\subsection{Wegner's estimate}
The following elementary observation links the spectra in finite volume to the decay of the Green function.

\begin{lemma}[Lemma 2.6 in \cite{GDSV}]\label{Poissonlem}
	If for any $ m\in[a,b] $, there exists
	$ \Lambda_m=[a_m,b_m]\subsetneqq [a,b] $ containing $m$ such that
	\begin{equation}\label{ob-spec}
		(1- \langle \delta_a,\delta_{a_m}\rangle) \left| \cG_{\Lambda_m}(x,E)(a_m,m) \right|
		+(1- \langle \delta_b,\delta_{b_m}\rangle) \left| \cG_{\Lambda_m}(x,E)(b_m,m) \right|
		<1,
	\end{equation}
	then $ E\notin \spec H_{[a,b]}(x) $.
\end{lemma}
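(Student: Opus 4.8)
\emph{Proof strategy.} This is a deterministic fact about the finite Hermitian matrix $H_{[a,b]}(x)$, and no input from Sections~2--3 is needed; the plan is to argue by contradiction. Assume $E\in\spec H_{[a,b]}(x)$, so that there is a nonzero eigenvector $\psi$ with $(H_{[a,b]}(x)-E)\psi=0$; extend $\psi$ by $0$ off $[a,b]$, so that the Dirichlet conditions $\psi(a-1)=\psi(b+1)=0$ hold. I would then pick $m\in[a,b]$ realizing $|\psi(m)|=\max_{k\in[a,b]}|\psi(k)|>0$ and take $\Lambda_m=[a_m,b_m]\subsetneqq[a,b]$ to be the interval provided by the hypothesis; note that the very appearance of $\cG_{\Lambda_m}(x,E)$ in (\ref{ob-spec}) already presupposes $E\notin\spec H_{\Lambda_m}(x)$, so this Green's function is well defined.

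The first step is to apply the Poisson formula (\ref{Poisson}) on the interval $\Lambda_m$, using the remark after it that the identity persists for $\psi$ solving $H_{[a,b]}(x)\psi=E\psi$ provided $\Lambda_m\subseteq[a,b]$; since $m\in\Lambda_m$ this gives
\[
\psi(m)=\cG_{\Lambda_m}(x,E)(m,a_m)\,\psi(a_m-1)+\cG_{\Lambda_m}(x,E)(m,b_m)\,\psi(b_m+1).
\]
Next I would bound each boundary term by a two-case analysis. If $a_m=a$ then $\psi(a_m-1)=\psi(a-1)=0$ and the factor $1-\langle\delta_a,\delta_{a_m}\rangle$ also vanishes; if $a_m>a$ then $a_m-1\in[a,b]$, so $|\psi(a_m-1)|\le|\psi(m)|$ by maximality while $1-\langle\delta_a,\delta_{a_m}\rangle=1$. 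In either case
\[
\bigl|\cG_{\Lambda_m}(x,E)(m,a_m)\,\psi(a_m-1)\bigr|\le(1-\langle\delta_a,\delta_{a_m}\rangle)\,\bigl|\cG_{\Lambda_m}(x,E)(m,a_m)\bigr|\,|\psi(m)|,
\]
and symmetrically for the term carrying $\psi(b_m+1)$ and the factor $1-\langle\delta_b,\delta_{b_m}\rangle$. Since $H_{\Lambda_m}(x)-E$ is real symmetric its inverse is symmetric, so $\cG_{\Lambda_m}(x,E)(m,a_m)=\cG_{\Lambda_m}(x,E)(a_m,m)$ and likewise at $b_m$. Feeding these bounds together with the triangle inequality into the Poisson identity yields
\[
|\psi(m)|\le\Bigl[(1-\langle\delta_a,\delta_{a_m}\rangle)\,|\cG_{\Lambda_m}(x,E)(a_m,m)|+(1-\langle\delta_b,\delta_{b_m}\rangle)\,|\cG_{\Lambda_m}(x,E)(b_m,m)|\Bigr]\,|\psi(m)|,
\]
and dividing by $|\psi(m)|>0$ contradicts the hypothesis (\ref{ob-spec}). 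Hence $E\notin\spec H_{[a,b]}(x)$.

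I do not expect any real obstacle here; the only delicate point is the bookkeeping in the two-case analysis, namely that the factor $1-\langle\delta_a,\delta_{a_m}\rangle$ exactly encodes the dichotomy ``$a_m=a$, in which case the term is killed by the Dirichlet condition $\psi(a-1)=0$'' versus ``$a_m>a$, in which case the term is controlled by maximality of $|\psi|$ on $[a,b]$'', and symmetrically at the right endpoint. Everything else is just the Poisson formula and the triangle inequality.
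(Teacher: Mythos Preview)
Your argument is correct and is exactly the standard proof of this fact; the paper does not give its own proof but merely cites the lemma from \cite{GDSV}, where the same contradiction argument via the Poisson formula and a maximum-modulus choice of $m$ is used. There is nothing to add.
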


We refer to the next result as the {\em covering form}  of LDT.
\begin{lemma}\label{lem:Greencoverap0}
  Suppose for each point $ m\in [1,n] $ there exists an interval $ I_m\subset [1,n] $
  such that:
  \begin{enumerate}
  \item $ \dist(m,[1,n]\setminus I_m)\ge |I_m|/100$,
  \item $ |I_m|\ge n_0$,
  \item $ \log|f_{I_m}(x,E_0)|> |I_m| L_{|I_m|}(E_0)-J_m$, where $ J_m\le|I_m|/1000$.
  \end{enumerate}
  Then
  \begin{equation*}
    \dist\left(E_0,\spec H_n(x)\right)\ge \exp\left(-\max_m\left\{J_m+ C |I_m|^{1-\nu}\right\}\right).
  \end{equation*}
\end{lemma}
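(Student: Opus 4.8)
The plan is to reduce the statement to the elementary exclusion criterion of Lemma~\ref{Poissonlem}, but applied not only at $E_0$ itself but at every energy in a tiny disk around it. Set $R:=\max_m\{J_m+C|I_m|^{1-\nu}\}$, where $C$ is a constant to be fixed large enough to absorb the implied constants occurring below. If $I_m=[1,n]$ for some $m$, then hypothesis (3) together with Lemma~\ref{lem:Green} already gives $\dist(E_0,\spec H_n(x))\ge\exp(-J_m-C|I_m|^{1-\nu})\ge\exp(-R)$, so from now on I may assume $I_m\subsetneq[1,n]$ for every $m$. The goal is then to prove that every $E$ with $|E-E_0|<\exp(-R)$ satisfies the hypotheses of Lemma~\ref{Poissonlem} with $[a,b]=[1,n]$ and $\Lambda_m=I_m$; since $\spec H_n(x)$ is closed this immediately yields $\dist(E_0,\spec H_n(x))\ge\exp(-R)$.

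The first step is to control the local resolvents at a nearby energy $E$. For each $m$, hypothesis (3) and Lemma~\ref{lem:Green} applied to the box $I_m$ give $\dist(E_0,\spec H_{I_m}(x))\ge\exp(-J_m-C_1|I_m|^{1-\nu})$; choosing $C$ large and using $|I_m|\ge n_0$ (hypothesis (2)), one gets $|E-E_0|<\exp(-R)\le\tfrac12\dist(E_0,\spec H_{I_m}(x))$, whence $\dist(E,\spec H_{I_m}(x))\ge\tfrac12\dist(E_0,\spec H_{I_m}(x))>0$ and $\cG_{I_m}(x,E)$ is well defined. Moreover every eigenvalue $\mu$ of $H_{I_m}(x)$ satisfies $|\mu-E_0|\ge 2|E-E_0|$, hence $|\mu-E|\ge\tfrac12|\mu-E_0|$, and multiplying over the $|I_m|$ eigenvalues gives $|f_{I_m}(x,E)|\ge 2^{-|I_m|}|f_{I_m}(x,E_0)|$. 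Combining this with hypothesis (3) and the lower bound $L_\ell(E_0)\ge\log\lambda-c_0(\log\lambda)^{1/2}$ (valid for all $\ell\ge1$, from (\ref{posi-le-g}), (\ref{low-up-bound-le-ni1}) and Remark~\ref{sup-bound-fnt}) yields $\log|f_{I_m}(x,E)|\ge|I_m|(\log\lambda-C_2(\log\lambda)^{1/2})-J_m$.

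Next I would estimate the two Green's-function entries occurring in (\ref{ob-spec}). By Cramer's rule (\ref{green-function}), $\cG_{I_m}(x,E)(a_m,m)=f_{[m+1,b_m]}(x,E)/f_{I_m}(x,E)$ and $\cG_{I_m}(x,E)(b_m,m)=f_{[a_m,m-1]}(x,E)/f_{I_m}(x,E)$. Hypothesis (1) forces $m$ to lie $|I_m|/100$-deep inside $I_m$: if $a_m>1$ then $b_m-m\le\tfrac{99}{100}|I_m|$, and if $b_m<n$ then $m-a_m\le\tfrac{99}{100}|I_m|$. Bounding each numerator from above by the uniform bound (\ref{sup-bound-mn}) of Remark~\ref{sup-bound-fnt} (which holds for all lengths $\ge1$) together with $L_\ell(E)\le\log\lambda+c_0(\log\lambda)^{1/2}$, and the denominator from below as in the previous step, the $\log\lambda$-terms produce a net gain of $-\tfrac1{100}|I_m|\log\lambda$, against which the errors of sizes $C_2|I_m|(\log\lambda)^{1/2}$, $C|I_m|^{1-\nu}$, $|I_m|\log2$ and $J_m\le|I_m|/1000$ are all negligible once $\lambda>\lambda_0$ and $n_0$ are large enough. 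Hence $|\cG_{I_m}(x,E)(a_m,m)|<\tfrac12$ whenever $a_m>1$ and $|\cG_{I_m}(x,E)(b_m,m)|<\tfrac12$ whenever $b_m<n$; since the coefficients $1-\langle\delta_1,\delta_{a_m}\rangle$ and $1-\langle\delta_n,\delta_{b_m}\rangle$ vanish in the complementary cases, (\ref{ob-spec}) holds for every $m\in[1,n]$, and Lemma~\ref{Poissonlem} gives $E\notin\spec H_n(x)$.

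The main obstacle, I expect, is the bookkeeping in the last two paragraphs: one must verify that the exponential gain $|I_m|\log\lambda$ coming from the depth condition (1) genuinely dominates every competing error term — in particular the crude factor $2^{-|I_m|}$ lost in passing from $f_{I_m}(x,E_0)$ to $f_{I_m}(x,E)$, the $(\log\lambda)^{1/2}$-scale ambiguity between $L_\ell(E)$, $L_\ell(E_0)$ and $\log\lambda$, the $|I_m|^{1-\nu}$-errors in (\ref{sup-bound-mn}) and Lemma~\ref{lem:Green}, and the permitted defect size $J_m\le|I_m|/1000$ in hypothesis (3). This is precisely where the large-coupling assumption $\lambda>\lambda_0$ and the freedom to choose $n_0=n_0(\lambda)$ large are used; everything else in the argument is routine.
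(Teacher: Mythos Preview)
Your argument is correct and follows the same overall architecture as the paper's proof: both reduce the statement to Lemma~\ref{Poissonlem} applied at every energy $E$ in the ball $|E-E_0|<\exp(-R)$, using the depth condition~(1) to make the two boundary Green's-function entries small. The one genuine methodological difference is in how the determinant lower bound is transferred from $E_0$ to nearby $E$. The paper invokes the telescoping perturbation estimate \eqref{dis-dif-eme} of Lemma~\ref{dif-dis}: since $|E-E_0|<\exp(-J_m-C|I_m|^{1-\nu})$ and $|f_{I_m}(x,E_0)|>\exp(|I_m|L_{|I_m|}(E_0)-J_m)$, one gets $\log|f_{I_m}(x,E)|\ge |I_m|L_{|I_m|}(E)-J_m-2$ with only an $O(1)$ loss, and then quotes Lemma~\ref{lem:Green} verbatim for the off-diagonal decay at $E$. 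Your eigenvalue-by-eigenvalue comparison $|\mu-E|\ge\tfrac12|\mu-E_0|$ is more elementary and avoids re-opening Lemma~\ref{dif-dis}, at the price of the cruder loss $|I_m|\log 2$, which you then absorb into the $\frac{1}{100}|I_m|\log\lambda$ gain via the large-coupling hypothesis. Likewise, the paper keeps everything phrased in terms of $L_{|I_m|}(E)$ and uses Lemma~\ref{lem:Green} as a black box, whereas you substitute $L_\ell\approx\log\lambda$ and redo the Cramer computation by hand. Both routes are valid; the paper's is quantitatively sharper (no spurious $2^{-|I_m|}$), while yours is more self-contained and makes the role of the large-$\lambda$ assumption explicit.
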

\begin{proof}
Set
$$\cE(E_0):=\left\{E:|E-E_0|<\exp\left(-\max_m\left\{J_m+ C  |I_m|^{1-\nu}\right\}\right)\right\}.$$
We will apply Lemma \ref{Poissonlem} to obtain that $E\not\in \spec H_n(x)$ for any $E\in \cE(E_0)$. By the continuity of $L_n(E)$,
\[\left|L_{|I_m|}(E)-L_{|I_m|}(E_0)\right|\ll 1, \forall E\in\cE(E_0).\]
Combining it with (\ref{dis-dif-eme}), we have that
\begin{eqnarray*}
    \log|f_{|I_m|}(x,E)|&\ge& \log|f_{|I_m|}(x,E_0)|- |E-E_0|\frac{\exp \left(|I_m|L_{|I_m|}(E_0)+C|I_m|^{1-\nu}\right)}{|f_{|I_m|}(x,E_0)|}\\
    &\ge &nL_{|I_m|}(E_0)-J_m-1\ge nL_{|I_m|}(E)-J_m-2.
  \end{eqnarray*}
Thus, by  Lemma \ref{lem:Green}, it yields that
\[\left| \cG_{I_m}(x,E)(k,j)\right|\le \exp\left(-\frac{\log\lambda}{2}|k - j|+J_m+C|I_m|^{1-\nu}\right).\]
This and assumptions (1) and (2) guarantee that the assumptions of Lemma \ref{Poissonlem} are satisfied, and then we finish this proof.
\end{proof}

Now, combining it with the LDTs, Proposition \ref{ldt-thm}, we obtain the desired
Wegner's estimate.
\begin{prop}\label{Wengerforqs}
For any $(\log n)^{\frac{4}{\nu}}\le k\le n$,
\begin{equation}\label{eq:wengerforqs}
 \mes \left\{x \in \tor\::\: \dist\bigl(\spec\left(H_n(x)\right), E\bigr) < \exp\left(-k^{1-\frac{\nu}{2}}\right)\right\}
    	\le \exp\left(-k^{\frac{\nu}{4}}\right).
\end{equation}
Moreover, the set on the left-hand side is contained in the union of less than  $k^{Cs}n$ intervals.
\end{prop}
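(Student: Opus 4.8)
The plan is to deduce Proposition \ref{Wengerforqs} by combining the covering form of the large deviation theorem (Lemma \ref{lem:Greencoverap0}) with the LDT for $f_n$ from Proposition \ref{ldt-thm}, using the contrapositive of Lemma \ref{fna-small} (i.e. Corollary \ref{spec-to-decay}) to convert a lower bound on the distance of $E$ to $\Spec H_k(x)$ into the pointwise lower bounds on $\log|f_{I_m}(x,E)|$ that the covering lemma requires. The natural scale to work at is $k$: I would tile $[1,n]$ by roughly $n/k$ consecutive blocks of length comparable to $k$ (say each $|I_m|\in[k,3k]$ chosen so that every $m$ is deep inside some block, which is exactly assumption (1) of Lemma \ref{lem:Greencoverap0} and is satisfied automatically by a standard overlapping tiling), so that assumption (2), $|I_m|\ge n_0$, holds because $k\ge(\log n)^{4/\nu}\ge n_0$ for $n>n_0$.

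First I would fix a point $E$ and estimate the measure of bad phases $x$ for a single block $I$ of length $\sim k$. By the $f_n$-LDT \eqref{ldt-fng} applied with $\delta\sim k^{-\nu/3}$ (comfortably above the threshold $k^{-\nu}$), the set of $x$ for which $\log|f_{I}(x,E)|\le |I|L_{|I|}(E)-|I|^{1-\nu/3}$ has measure at most $\exp(-ck^{\nu/2})$ and, crucially, is contained in at most $k^{C_0 s}$ intervals. Taking the union over the $\lesssim n/k\le n$ blocks of the tiling, the total bad set has measure at most $n\exp(-ck^{\nu/2})\le \exp(-k^{\nu/4})$ (using again $k\ge(\log n)^{4/\nu}$, which makes $\log n\le k^{\nu/4}$ absorb the factor $n$), and it is covered by at most $n\cdot k^{C_0 s}$ intervals; adjusting the constant $C$ gives the ``$k^{Cs}n$ intervals'' conclusion. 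For every $x$ outside this bad set, each block satisfies $\log|f_{I_m}(x,E)|>|I_m|L_{|I_m|}(E)-J_m$ with $J_m\le |I_m|^{1-\nu/3}\ll |I_m|/1000$, so all three hypotheses of Lemma \ref{lem:Greencoverap0} hold, and that lemma yields $\dist(E,\Spec H_n(x))\ge \exp(-\max_m\{J_m+C|I_m|^{1-\nu}\})\ge \exp(-C'k^{1-\nu/3})>\exp(-k^{1-\nu/2})$ for $k$ large. Hence the phases $x$ with $\dist(\Spec H_n(x),E)<\exp(-k^{1-\nu/2})$ all lie in the bad set, which is exactly \eqref{eq:wengerforqs} together with the interval count.

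The main technical point to be careful about — and the step I expect to demand the most attention — is the bookkeeping of the deviation exponents: one must check that the per-block loss $J_m\approx k^{1-\nu/3}$ produced by the LDT at deviation $\delta\approx k^{-\nu/3}$ is simultaneously (a) small enough, $J_m\le |I_m|/1000$, for the covering lemma's hypothesis (3), and (b) large enough, after adding the $C|I_m|^{1-\nu}$ error from Lemma \ref{lem:Green}, to still give a distance bound stronger than the target $\exp(-k^{1-\nu/2})$; this forces the choice of the intermediate exponent (here $\nu/3$) to sit strictly between $\nu/2$ and $0$, and the same margin is what lets the factor $n$ from the union bound be swallowed by $\exp(-k^{\nu/4})$ under the hypothesis $k\ge(\log n)^{4/\nu}$. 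The remaining ingredients — the overlapping tiling realizing assumption (1), the continuity of $L_{|I_m|}(E)$ in $E$ used inside Lemma \ref{lem:Greencoverap0}, and the semialgebraic interval count inherited verbatim from \eqref{ldt-fng} — are routine.
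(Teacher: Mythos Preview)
Your overall strategy --- tile $[1,n]$ by intervals of length $\sim k$, apply the determinant LDT \eqref{ldt-fng} on each block, take a union bound, and feed the result into the covering lemma (Lemma~\ref{lem:Greencoverap0}) --- is exactly the paper's approach. However, your choice of deviation exponent is on the wrong side, and the key chain of inequalities you write is false.

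You take $\delta\sim k^{-\nu/3}$, hence $J_m\sim k^{1-\nu/3}$. Since $\nu/3<\nu/2$ one has $1-\nu/3>1-\nu/2$ and therefore $k^{1-\nu/3}\gg k^{1-\nu/2}$ for large $k$. Thus the covering lemma only yields
\[
\dist\bigl(E,\spec H_n(x)\bigr)\ \ge\ \exp\bigl(-C'k^{1-\nu/3}\bigr),
\]
and the inequality you assert, $\exp(-C'k^{1-\nu/3})>\exp(-k^{1-\nu/2})$, is equivalent to $C'<k^{-\nu/6}$ and fails for all large $k$. Your discussion of the ``bookkeeping'' has the direction reversed: a \emph{larger} $J_m$ gives a \emph{weaker} distance bound, not a stronger one. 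The correct constraints on the exponent $\alpha$ in $\delta=k^{-\alpha}$ are (i) $\alpha\ge \nu/2$, so that $J_m=k^{1-\alpha}\lesssim k^{1-\nu/2}$ and the distance bound from Lemma~\ref{lem:Greencoverap0} beats $\exp(-k^{1-\nu/2})$; and (ii) $\alpha\le 3\nu/4$, so that $\delta k^{\nu}=k^{\nu-\alpha}\ge k^{\nu/4}$ and the factor $n\le\exp(k^{\nu/4})$ is absorbed by the LDT bound $\exp(-c\delta k^{\nu})$. Your choice $\alpha=\nu/3$ lies outside the admissible window $[\nu/2,3\nu/4]$.

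The paper simply takes $J_m=k^{1-\nu/2}$ (i.e.\ $\alpha=\nu/2$), uses the concrete tiling
\[
I_m=\begin{cases}[m,m+k],& 1\le m\le k/2,\\ [m-k/2,m+k/2],& k/2<m<n-k/2,\\ [m-k,m],& n-k/2\le m\le n,\end{cases}
\]
and obtains $\mes(\cB_{n,E})<n\exp(-ck^{\nu/2})\le \exp(-k^{\nu/4})$, exactly as you outline --- only with the right exponent. Once you replace $\nu/3$ by $\nu/2$ (and allow the usual constant adjustment so that $J_m+C k^{1-\nu}\le k^{1-\nu/2}$), your argument becomes correct and coincides with the paper's. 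The interval count and the mention of Corollary~\ref{spec-to-decay} (which is not actually needed here) are fine.
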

\begin{proof}
Without loss of generality, $k$ is an even number. Then,
  choose $I_m$ in Lemma \ref{lem:Greencoverap0} as
  \[I_m=\left\{\begin{array}
    {ccc}
    \ [m,k+m],&\ m\in[1,k/2],\\
    \ [m-k/2,m+k/2],&\ m\in [k/2+1, n-k/2-1],\\
    \ [m-k,m],&\ m\in[n-k/2,n]\\
  \end{array}\right.\]
and $J_m=k^{1-\frac{\nu}{2}}$. Due to (\ref{ldt-fng}), we have there exists a set $\cB_{n,E_0}$ satisfying that
\[ \mes(\cB_{n,E})< n\exp\left(-ck^{\frac{\nu}{2}}\right)\]
such that for any $x\not\in \cB_{n,E}$, all assumptions of Lemma \ref{lem:Greencoverap0} are satisfied.
The number of the intervals comes from Proposition \ref{ldt-thm} directly.
\end{proof}

An important consequence of Wegner’s estimate is that the graphs of the eigenvalues
cannot be too flat.
\begin{prop}\label{size-of-spectral-segment}
If $\cS\in \TT$ is connected and $\mes(\cS)\ge \exp\left(-k^{\frac{\nu}{4}}\right)$ for some $k$ satisfying that $\left(\log n\right)^{\frac{4}{\nu}}\le k\le n$, then
\[\mes\left(E_n^j(\cS)\right)\ge \exp\left(-k^{1-\frac{\nu}{4}}\right)\]
 for any $j\in\{1,\cdots,n\}$.
\end{prop}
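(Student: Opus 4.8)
The plan is to deduce the statement from Wegner's estimate, Proposition~\ref{Wengerforqs}, by showing that if the graph of $E_n^j$ over $\cS$ were too flat, then $\cS$ would have to lie inside a Wegner exceptional set. First I would record the elementary fact that, since $H_n(x)$ is Hermitian with entries depending continuously on $x$, the $j$-th eigenvalue $E_n^j(x)$ (ordered, say, increasingly) is a continuous function of $x\in\TT$; consequently, as $\cS$ is connected, $E_n^j(\cS)$ is a closed interval and $\mes\left(E_n^j(\cS)\right)$ equals its length. In particular, for any fixed $E_0\in E_n^j(\cS)$ one has $\left|E_n^j(x)-E_0\right|\le \mes\left(E_n^j(\cS)\right)$ for every $x\in\cS$.

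Then I would argue by contradiction: assume $\mes\left(E_n^j(\cS)\right)<\exp\left(-k^{1-\frac{\nu}{4}}\right)$. Since $\nu>0$ and $k$ is large, $k^{1-\frac{\nu}{4}}> k^{1-\frac{\nu}{2}}$, hence $\exp\left(-k^{1-\frac{\nu}{4}}\right)<\exp\left(-k^{1-\frac{\nu}{2}}\right)$. Using $E_n^j(x)\in\spec H_n(x)$, this gives, for every $x\in\cS$,
\[
\dist\left(E_0,\spec H_n(x)\right)\le \left|E_n^j(x)-E_0\right|\le \mes\left(E_n^j(\cS)\right)<\exp\left(-k^{1-\frac{\nu}{2}}\right),
\]
so that $\cS$ is contained in the set on the left-hand side of~(\ref{eq:wengerforqs}). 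The hypothesis $(\log n)^{4/\nu}\le k\le n$ is exactly what is needed to apply Proposition~\ref{Wengerforqs} at scale $k$; its proof in fact yields for that set the bound $n\exp\left(-ck^{\frac{\nu}{2}}\right)$, and since $k\ge(\log n)^{4/\nu}$ forces $n\le\exp\left(k^{\frac{\nu}{4}}\right)$, this is strictly less than $\exp\left(-k^{\frac{\nu}{4}}\right)$ for $n>n_0$. Hence $\mes(\cS)<\exp\left(-k^{\frac{\nu}{4}}\right)$, contradicting the assumption $\mes(\cS)\ge\exp\left(-k^{\frac{\nu}{4}}\right)$. This forces $\mes\left(E_n^j(\cS)\right)\ge\exp\left(-k^{1-\frac{\nu}{4}}\right)$, uniformly in $j$.

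I do not expect a genuine obstacle here; everything reduces to a short deduction from the already established Wegner estimate. The two points that need a little care are the use of connectedness of $\cS$ — it is what guarantees that $E_n^j(\cS)$ is an interval, so that a \emph{single} energy $E_0$ is uniformly close to $\spec H_n(x)$ for all $x\in\cS$ — and the passage from the non-strict bound $\mes(\cS)\le\exp(-k^{\nu/4})$ to a genuine contradiction, which is why one should extract the sharper estimate $n\exp(-ck^{\nu/2})$ from the proof of Proposition~\ref{Wengerforqs} rather than quoting its stated form verbatim.
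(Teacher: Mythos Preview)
Your proposal is correct and follows essentially the same route as the paper: use continuity to see that $E_n^j(\cS)$ is an interval, pick a reference energy $E_0$ in it, and derive a contradiction with Wegner's estimate (Proposition~\ref{Wengerforqs}) under the assumption that the interval is too short. Your extra care about the borderline case $\mes(\cS)=\exp(-k^{\nu/4})$, handled by invoking the sharper bound $n\exp(-ck^{\nu/2})$ from the proof of Proposition~\ref{Wengerforqs}, is a small refinement the paper glosses over.
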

\begin{proof} By   the continuity of the functions $E_j^{(n)} (x)$, $E_j^{[-n,n]}(\cS)$ is an interval.  Let $E_0$ be the center of this interval. Then if $\mes\left(E_n^j(\cS)\right)<\exp\left(-k^{1-\frac{\nu}{4}}\right)$, which means that for any $x\in \cS$,
 \[ \left|E_j^{n}(x)-E_0\right|<\exp\left(-k^{1-\frac{\nu}{4}}\right),\]
 then it contradict with (\ref{eq:wengerforqs}).
\end{proof}

\subsection{spectrum criterion}
 Lemma \ref{Poissonlem} can be also used to establish our criterion for an energy to be in the spectrum. For this we will
 use the following well-known fact.

\begin{lemma}[Lemma 2.7 in \cite{GDSV}]\label{akbk}
	If  there exist $ \delta>0 $ and sequences $ a_k\to-\infty $,
	$ b_k\to \infty $ such that
	\begin{equation*}
		\dist\left(E,\spec H_{[a_k,b_k]}(x)\right)\ge \delta,
	\end{equation*}
	then
	\begin{equation*}
		\dist\left(E,\spec H(x)\right)\ge \delta.
	\end{equation*}
\end{lemma}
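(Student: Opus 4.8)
The plan is to prove the estimate directly: since $H(x)$ is a bounded self-adjoint operator on $\ell^2(\ZZ)$ (it is the sum of the two shifts and multiplication by the bounded real sequence $V(x+n\omega)$) and $E$ is real, the assertion $\dist\bigl(E,\spec H(x)\bigr)\ge\delta$ is equivalent to the a priori bound
\[
\|(H(x)-E)\phi\|_{\ell^2(\ZZ)}\ge\delta\,\|\phi\|_{\ell^2(\ZZ)}\qquad\text{ for all }\phi\in\ell^2(\ZZ).
\]
So the whole task reduces to establishing this inequality, and I would first note that it suffices to prove it for finitely supported $\phi$, since those are dense in $\ell^2(\ZZ)$ and $H(x)-E$ is bounded, so the estimate passes to the closure.

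Next, fix a finitely supported $\phi$. Because $a_k\to-\infty$ and $b_k\to\infty$, I would choose $k$ so large that $\phi$ is supported in $[a_k+1,b_k-1]$, and regard $\phi$ as an element of $\ell^2([a_k,b_k])$ by restriction (equivalently, zero extension). From the recursion \eqref{jacobiequ} one checks that $(H(x)\phi)(n)=0$ for $n\notin[a_k,b_k]$ and that the restriction of $H(x)\phi$ to $[a_k,b_k]$ coincides with $H_{[a_k,b_k]}(x)\phi$: the only sites at which a discrepancy could occur are the endpoints $n=a_k$ and $n=b_k$, and there the zero extension of $\phi$ agrees with the Dirichlet convention $\phi(a_k-1)=\phi(b_k+1)=0$ built into $H_{[a_k,b_k]}(x)$, so there is none. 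Consequently
\[
\|(H(x)-E)\phi\|_{\ell^2(\ZZ)}=\|(H_{[a_k,b_k]}(x)-E)\phi\|_{\ell^2([a_k,b_k])}\ge\delta\,\|\phi\|_{\ell^2([a_k,b_k])}=\delta\,\|\phi\|_{\ell^2(\ZZ)},
\]
where the middle inequality is exactly the hypothesis $\dist\bigl(E,\spec H_{[a_k,b_k]}(x)\bigr)\ge\delta$ applied to the finite self-adjoint matrix $H_{[a_k,b_k]}(x)$, for which distance to the spectrum equals the infimum of $\|(H_{[a_k,b_k]}(x)-E)\psi\|/\|\psi\|$.

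Having obtained $\|(H(x)-E)\phi\|\ge\delta\|\phi\|$ for all $\phi\in\ell^2(\ZZ)$, I would conclude by invoking self-adjointness of $H(x)$: the estimate makes $H(x)-E$ injective with closed range, and the range is dense because its orthogonal complement is the kernel of $(H(x)-E)^*=H(x)-E$, which is trivial; hence $H(x)-E$ is a bijection with $\|(H(x)-E)^{-1}\|\le\delta^{-1}$, i.e. $\dist\bigl(E,\spec H(x)\bigr)\ge\delta$. The argument is really just the statement that $H_{[a_k,b_k]}(x)\to H(x)$ in the strong resolvent sense together with the fact that spectrum cannot open up inside a uniform gap under such convergence, so I do not expect a genuine obstacle; the only point requiring a moment's care is the boundary bookkeeping in the second step, namely verifying that truncating with Dirichlet conditions leaves the action of the operator unchanged on vectors supported strictly inside the box.
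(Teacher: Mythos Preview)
Your argument is correct and is the standard one for this kind of statement. The only thing to compare against is that the paper does not actually supply a proof here: the lemma is quoted verbatim as Lemma~2.7 of \cite{GDSV} and used as a black box. Your write-up therefore fills in what the paper omits, via the density-of-finitely-supported-vectors reduction and the boundary bookkeeping you describe; this is essentially the proof one would find in \cite{GDSV} as well (strong resolvent convergence of the Dirichlet truncations, phrased elementarily).
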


Now we can  formulate the following called \textbf{spectrum criterion} lemma:
\begin{lemma}\label{spectrum-criterion}
    If  for any $ x\in\TT $, there exists $ r(x) \in [-n/2,n/2]$ such that
    \begin{equation*}
    \dist\left(E_0,\spec H_{r(x)+[-n,n]}(x)\right)\ge \exp\left(-n^{\frac{\nu}{4}}\right),
    \end{equation*}
    then
    \begin{equation*}
    	\dist(E_0,\cS_\omega)\ge \frac{1}{2}\exp\left(-n^{\frac{\nu}{4}}\right).
    \end{equation*}
\end{lemma}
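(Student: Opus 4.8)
The plan is to fix an arbitrary phase $x\in\TT$, build an increasing sequence of finite intervals on which $E_0$ (and every nearby energy) lies far from the finite-volume spectrum, and then pass to the infinite volume. Concretely, it suffices to exhibit $a_k\to-\infty$ and $b_k\to+\infty$ with $\dist\bigl(E_0,\spec H_{[a_k,b_k]}(x)\bigr)\ge\tfrac12\exp(-n^{\nu/4})$: indeed, Lemma \ref{akbk} then upgrades this to $\dist\bigl(E_0,\spec H(x)\bigr)\ge\tfrac12\exp(-n^{\nu/4})$, and since $\spec H(x,\omega)=\cS_\omega$ is independent of $x$ for irrational $\omega$, this is exactly the claim. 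To obtain the finite-volume bound I would show, via the Poisson-formula criterion Lemma \ref{Poissonlem}, that no $E$ with $|E-E_0|<\tfrac12\exp(-n^{\nu/4})$ belongs to $\spec H_{[a_k,b_k]}(x)$; this requires, for every site $m\in[a_k,b_k]$, a proper subinterval $\Lambda_m\ni m$ on which the Green's function $\cG_{\Lambda_m}(x,E)$ decays geometrically from $m$ to the interior-facing endpoint(s), strongly enough that (\ref{ob-spec}) holds.

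The local intervals come from feeding the hypothesis at the translated phases $x+j\omega$, $j\in\ZZ$. It produces $r_j:=r(x+j\omega)\in[-n/2,n/2]$ with $\dist\bigl(E_0,\spec H_{r_j+[-n,n]}(x+j\omega)\bigr)\ge\exp(-n^{\nu/4})$; because the restriction of $H(x+j\omega)$ to an interval is unitarily equivalent, via the shift by $j$, to the restriction of $H(x)$ to the translated interval, this is the same as $\dist\bigl(E_0,\spec H_{I_j}(x)\bigr)\ge\exp(-n^{\nu/4})$ with $I_j:=[\,j+r_j-n,\,j+r_j+n\,]$. Each $I_j$ has length $2n+1$, contains $j$, and satisfies $\dist(j,\partial I_j)=n-|r_j|\ge n/2$. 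For $|E-E_0|<\tfrac12\exp(-n^{\nu/4})$ one still has $\dist\bigl(E,\spec H_{I_j}(x)\bigr)\ge\tfrac12\exp(-n^{\nu/4})$, so applying Corollary \ref{spec-to-decay} on $I_j$ (with the choice $J=n^{\nu/4}$, legitimate once $n>n_0$) gives $\log|f_{I_j}(x,E)|>|I_j|L_{|I_j|}(E)-Cn^{1-\nu/4}$, and then Lemma \ref{lem:Green} yields $|\cG_{I_j}(x,E)(p,q)|\le\exp\bigl(-\tfrac{\log\lambda}{2}|p-q|+Cn^{1-\nu/4}\bigr)$. In particular, whenever a site $m$ has $\dist(m,\partial I_j)\ge n/2$, the Green's function from $m$ to an endpoint of $I_j$ is at most $\exp\bigl(-\tfrac{\log\lambda}{4}n+Cn^{1-\nu/4}\bigr)\ll1$ for $n>n_0$, since $\log\lambda$ is a large fixed number and $n^{1-\nu/4}=o(n)$.

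Finally I would assemble $[a_k,b_k]$. Pick indices $j_k\to-\infty$ and $j'_k\to+\infty$, and let $a_k$ be the left endpoint of $I_{j_k}$ and $b_k$ the right endpoint of $I_{j'_k}$; then $a_k\to-\infty$, $b_k\to+\infty$, and $I_{j_k},I_{j'_k}\subsetneq[a_k,b_k]$ once $b_k-a_k$ is large. For $m\in[a_k,b_k]$ I assign $\Lambda_m$ by cases: if $m-a_k\ge\tfrac32 n$ and $b_k-m\ge\tfrac32 n$, take $\Lambda_m=I_m\subsetneq[a_k,b_k]$, whose two endpoints are both interior and at distance $\ge n/2$ from $m$; if $m-a_k<\tfrac32 n$, take $\Lambda_m=I_{j_k}$, whose left endpoint equals $a_k$ so that the corresponding term of (\ref{ob-spec}) is switched off while $m$ sits at distance $\ge n/2$ from the right endpoint; symmetrically when $b_k-m<\tfrac32 n$. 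In every case (\ref{ob-spec}) holds with ample margin, so Lemma \ref{Poissonlem} gives $E\notin\spec H_{[a_k,b_k]}(x)$, which finishes the proof. The one genuinely delicate point is this boundary bookkeeping: a site near an endpoint of $[a_k,b_k]$ cannot be placed well inside a symmetric good interval, which is precisely why $a_k$ and $b_k$ are anchored to endpoints of the good intervals $I_{j_k},I_{j'_k}$ so that the offending Poisson term vanishes identically. Everything else is the routine propagation of exponents from the input bound $\exp(-n^{\nu/4})$ through Corollary \ref{spec-to-decay} and Lemma \ref{lem:Green}, which is comfortable because the geometric factor $\exp(-\tfrac{\log\lambda}{4}n)$ dominates all the $n^{1-\nu/4}$-type errors for $n>n_0$.
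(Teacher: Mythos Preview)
Your proof is correct and follows essentially the same route as the paper's: you use covariance to convert the hypothesis at shifted phases $x+j\omega$ into good local intervals $I_j=[j+r_j-n,\,j+r_j+n]$ for the fixed phase $x$, anchor the endpoints of $[a_k,b_k]$ to the outer endpoints of $I_{j_k}$ and $I_{j'_k}$ so that the boundary Poisson terms vanish, feed the remaining sites their own $I_m$, and then invoke Corollary~\ref{spec-to-decay}, Lemma~\ref{lem:Green}, Lemma~\ref{Poissonlem}, and finally Lemma~\ref{akbk}. The paper carries this out with the specific choice $j_k=-\bar n$, $j'_k=\bar n$ and the threshold $n+[n/2]$ in place of your $\tfrac32 n$, but the argument is otherwise identical.
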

\begin{proof}
	Fix $ x\in \TT $ and let $ \bar n\ge n $ be arbitrary. Let
	\begin{equation*}
		p=-\bar n-n+r(x-\bar n\omega), \ q=\bar n+n+r(x+\bar n\omega).
	\end{equation*}
	We will use Lemma \ref{Poissonlem} to show that $ \tilde E\notin \spec H_{[p,q]}(x) $ for any $ |\tilde E-E|\le \frac{1}{2}\exp\left(-n^{\frac{\nu}{4}}\right)$.
By the hypothesis and noting that
\[H_{r(x-\bar n\omega)+[-n,n]}(x-\bar n\omega)=H_{-\bar n+r(x-\bar n\omega)+[-n,n]}(x)=H_{[p,p+2n]}(x),\]
we have
	\begin{equation*}
		\dist\left(E,\spec H_{[p,p+2n]}(x)\right)\ge \exp\left(-n^{\frac{\nu}{4}}\right).
	\end{equation*}
Then,
\[\dist\left(\tilde E,\spec H_{[p,p+2n]}(x)\right)\ge \frac{1}{2}\exp\left(-n^{\frac{\nu}{4}}\right).\]
	Due to Corollary \ref{spec-to-decay} and Lemma \ref{lem:Green}, it yields that
\begin{equation*}
		\left| \cG_{[p,p+2n]}(x,\tilde E)(j,k) \right|
		\le \exp\left(-\frac{\log\lambda}{2}|j-k|+Cn^{1-\nu}+n^{1-\frac{\nu}{4}}\right).
	\end{equation*}
 It implies
\begin{equation*}
		\left| \cG_{[p,p+2n]}(x,\tilde E)(p+2n,m) \right|<1
	\end{equation*}
	for any $ m\in\left[p,p+n+[n/2]\right] $. Analogously, we can also have
	\begin{equation*}
		\left| \cG_{[q-2n,q]}(x,\tilde E)(q-2n,m) \right|<1
	\end{equation*}	
	for any $ m\in\left[q-n-[n/2],q\right] $. For $ m\in\left[p+n+[n/2],q-n-[n/2]\right] $, let
	\begin{equation*}
		a_m=m-n+r(x+m\omega)\ge p+\left[\frac{n}{2}\right]+r(x+m\omega)\ge p,\ b_m=m+n+r(x+m\omega)\le q-\left[\frac{n}{2}\right]+r(x+m\omega)\le q.
	\end{equation*}
 By the fact that $|m-a_m|,|m-b_m|\ge \frac{n}{2}$ and  $b_m-a_m=2n$, we get
	\begin{equation*}
		\left| \cG_{[a_m,b_m]}(x,\tilde E)(a_m,m) \right|
		+\left| \cG_{[a_m,b_m]}(x,\tilde E)(b_m,m) \right|<1.
	\end{equation*}
	Lemma \ref{Poissonlem} are applied to get that $ \tilde E\notin \spec H_{[p,q]}(x,\omega) $.
	Since $ \bar n $ was arbitrary, it follows that we can choose sequences $ a_k\to-\infty $ and
	$ b_k\to \infty $ such that
	\begin{equation*}
		\dist\left(E,\spec H_{[a_k,b_k]}(x,\omega)\right)\ge \frac{1}{2}\exp\left(-n^{\frac{\nu}{4}}\right).
	\end{equation*}
	Then, the conclusion follows from Lemma \ref{akbk}.
\end{proof}

\section{The Stability and Homogeneity of the Spectrum}
\subsection{The Stability of of the Spectrum}
In this subsection, we will mainly consider how much of the finite scale eigenvalue $E_j^{[-n,n]}$ survives when we pass  $n$ to the infinite.
\begin{lemma}\label{initial-spectral-segment}Let $E\in \cS_\omega$.
 There  exist $j_0\in [-n,n]$ and a segment $I$, $|I|\ge   c\exp\left(-\left(\log n\right)^{\frac{8A}{\nu}}\right)$, centered at a point $x_0$, such that
  \begin{equation}\label{EN-E0}
		\left|E_{j}^{[-n,n]}(x_0)-E\right|\le \exp\left(-n^{\frac{\nu}{4}}\right),
	\end{equation}
	and for any $ x\in I $ there exists $ \xi $, $ \|\xi\|=1 $, with support in $ [-n+1,n-1] $, such that
	\begin{equation*}
		\left\|\left(H(x)-E_{j}^{[-n,n]}(x)\right)\xi\right\|< \exp\left(-cn^{\nu}\right).
	\end{equation*}
What's more,
\[ \mes E_{j}^{[-n,n]}(I)> \exp\left(-\left(\log n\right)^{\frac{32A}{\nu^2}}\right).\]
\end{lemma}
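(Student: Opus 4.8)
The plan is to extract the centre $x_0$ and the label $j_0$ from the spectrum criterion, then to turn the corresponding finite-volume eigenvalue into a genuine approximate eigenvalue of $H(x)$ on a whole interval by forcing its eigenfunction to decay at the two ends of $[-n,n]$, and finally to read off the last assertion from Proposition \ref{size-of-spectral-segment}. For Step 1, since $E\in\cS_\omega$ we have $\dist(E,\cS_\omega)=0<\tfrac12\exp(-n^{\nu/4})$, so the hypothesis of Lemma \ref{spectrum-criterion} must fail for this energy; its negation produces $x_0\in\TT$ with $\dist\bigl(E,\spec H_{r+[-n,n]}(x_0)\bigr)<\exp(-n^{\nu/4})$ for \emph{every} $r\in[-n/2,n/2]$. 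Taking $r=0$ gives an eigenvalue of $H_{[-n,n]}(x_0)$, which we label $E_{j_0}^{[-n,n]}(x_0)$, within $\exp(-n^{\nu/4})$ of $E$; this is (\ref{EN-E0}), and the bounds for the shifts $r\neq0$ are kept for Step 2.

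For Step 2, fix $\ell:=\lceil 2n^{\nu}/\log\lambda\rceil$, write $\tilde E(x):=E_{j_0}^{[-n,n]}(x)$, $\psi_x:=\psi_{j_0}^{[-n,n]}(x)$, and let $\Lambda$ run over the four edge blocks $[n-2\ell,n]$, $[n-1-2\ell,n-1]$, $[-n,-n+2\ell]$, $[-n+1,-n+1+2\ell]$ of $[-n,n]$. The key point is that whenever $\dist\bigl(\tilde E(x),\spec H_{\Lambda}(x)\bigr)>\exp(-(2\ell)^{1-\nu/2})$ for all four $\Lambda$, then — using $\psi_x(\pm(n+1))=0$ — the Poisson formula (\ref{Poisson}) on the edge blocks and the Green's-function bound of Lemma \ref{lem:Green} give $|\psi_x(m)|<\exp(-cn^{\nu})$ for $m\in\{\pm n,\pm(n-1)\}$, so that $\xi$, the $\ell^2$-normalisation of the restriction of $\psi_x$ to $[-n+1,n-1]$, is a unit vector supported in $[-n+1,n-1]$ with $\|(H(x)-\tilde E(x))\xi\|<\exp(-cn^{\nu})$, which is the second assertion. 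It remains to find a segment $I$ centred at $x_0$, of length $\ge c\exp(-(\log n)^{8A/\nu})$, on which this separation holds for all four $\Lambda$. I would argue as follows. Replacing $v$ by its polynomial truncation $\tilde v_{n}$ from Subsection \ref{sec:semi-set} turns the bad set $\cB:=\bigcup_{\Lambda}\{x:\dist(\tilde E(x),\spec H_{\Lambda}(x))\le\exp(-(2\ell)^{1-\nu/2})\}$, up to a negligible error, into a semialgebraic set of degree polynomial in $n$, so that by Corollary \ref{number} it consists of at most $n^{C}$ intervals; a root-counting argument and a {\L}ojasiewicz-type bound for the analytic functions $x\mapsto\tilde E(x)-\mu(x)$, $\mu(x)\in\spec H_{\Lambda}(x)$ (in the spirit of the initial-step estimate coming from the non-degeneracy condition), then make $\mes(\cB)$ exponentially small. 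On the other hand, the Step 1 bounds for all shifts $r$, combined with Proposition \ref{Wengerforqs} at scale $2\ell+1$, allow an elimination of double resonances showing that $\dist\bigl(E,\spec H_{\Lambda}(x_0)\bigr)>\lambda\|v'\|_\infty\exp(-(\log n)^{8A/\nu})$ for every edge block; since $\tilde E(\cdot)$ and the spectra of the edge blocks are $\lambda\|v'\|_\infty$-Lipschitz, this forces $\dist(x_0,\cB)\ge c\exp(-(\log n)^{8A/\nu})$, and the ball $I$ of this radius about $x_0$ is the desired segment. This is the step I expect to be the main obstacle: the graphs $x\mapsto E_{j_0}^{[-n,n]}(x)$ are \emph{not} flat (again by Proposition \ref{size-of-spectral-segment}), so across $I$ the energy $\tilde E(x)$ moves by far more than the tube width $\exp(-cn^{\nu})$, and the Green's-function estimates at the \emph{moving} energy $\tilde E(x)$ cannot be reduced to the fixed energy $E$; controlling them is exactly where the semialgebraic count of components and the {\L}ojasiewicz bound are indispensable.

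For Step 3, $I$ is connected with $\mes(I)\ge c\exp(-(\log n)^{8A/\nu})$, so I apply Proposition \ref{size-of-spectral-segment} with $\cS=I$ and $k$ the least integer with $k^{\nu/4}\ge(\log n)^{8A/\nu}+\log(1/c)$; then $k\asymp(\log n)^{32A/\nu^2}$, $(\log n)^{4/\nu}\le k\le n$ for $n$ large, and $\exp(-k^{\nu/4})\le\mes(I)$, so the proposition gives $\mes\bigl(E_{j_0}^{[-n,n]}(I)\bigr)\ge\exp(-k^{1-\nu/4})>\exp(-(\log n)^{32A/\nu^2})$, which is the final assertion.
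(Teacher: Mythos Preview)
Your Steps~1 and~3 are essentially correct and match the paper. The problem is Step~2, where you overcomplicate the construction of $I$ and invoke machinery that is neither needed nor justified.

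The paper never works at the moving energy $\tilde E(x)$. It applies Wegner's estimate (Proposition~\ref{Wengerforqs}) at the \emph{fixed} energy $E$ and at the edge scale $l=n^{\nu}$, $k=(\log n)^{8A/\nu}$: the bad set $\{y:\dist(E,\spec H_l(y))<\exp(-k^{1-\nu/2})\}$ is covered by at most $l^2$ intervals each of length $\le\exp(-(\log n)^{2A})$. Crucially, the paper does \emph{not} commit to $r=0$ in the spectrum-criterion failure. Since $\|l^2\omega\|>c\,l^{-2A}\gg\exp(-(\log n)^{2A})$ by the Diophantine condition, the orbit segment $\{x'+(m-n)\omega:0\le m<l^2\}$ meets each bad interval at most once, and likewise for the right edge. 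Hence one can choose a shift $|m'|\le l^2\ll n/2$ so that \emph{both} edge blocks at $x_0:=x'+m'\omega$ are at distance $\ge\exp(-(\log n)^{8A/\nu})$ from $E$; only then is $j_0$ selected, via the bound at shift $i=m'$.

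Your worry about the moving energy is then a red herring. Once the separation
\[
\dist\bigl(E_{j_0}^{[-n,n]}(x_0),\spec H_l(x_0-n\omega)\bigr),\ \dist\bigl(E_{j_0}^{[-n,n]}(x_0),\spec H_l(x_0+(n-l+1)\omega)\bigr)\ge\tfrac12\exp\bigl(-(\log n)^{8A/\nu}\bigr)
\]
holds at the single point $x_0$, it persists on $|x-x_0|\le c\exp(-(\log n)^{8A/\nu})$ simply because \emph{both} $E_{j_0}^{[-n,n]}(\cdot)$ and the edge-block spectra are Lipschitz in $x$ (inequality~(\ref{e-eforx})). The Green's-function bound of Lemma~\ref{lem:Green} then holds at the current $E_{j_0}^{[-n,n]}(x)$ for every $x\in I$, and the Poisson formula gives the decay of $\psi_{j_0}^{[-n,n]}(x;\cdot)$ on the last $l/2$ sites at each end, exactly as you describe. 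No semialgebraic complexity count and no {\L}ojasiewicz bound for $x\mapsto\tilde E(x)-\mu(x)$ are required; the latter in particular would not follow from the non-degeneracy of $v$ without substantial additional work, so that step in your sketch is a genuine gap. Your ``elimination of double resonances'' is in spirit the paper's shift argument, but by fixing $r=0$ first you left yourself no mechanism to guarantee that the specific edge blocks at your $x_0$ avoid the bad set; the paper sidesteps this entirely by choosing $x_0$ \emph{after} the Wegner/Diophantine pigeonholing.
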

\begin{proof} Due to the assumption that $E\in \cS_{\omega}$ and  Lemma \ref{spectrum-criterion}, there exists $ x'\in\TT $ such
	that
	\begin{equation}\label{E-spec-N-wd}
		\max_{|i|\le n/2} \dist\left(E,\spec H_{i+[-n,n]}(x')\right)< \exp\left(-n^{\frac{n}{4}}\right).
	\end{equation}
On the other hand, we apply
 Proposition \ref{Wengerforqs}  to obtain that
\underline{}	\begin{equation*}
		\left\{ x\in \TT: \dist\left(E,\spec H_{l}(x)\right) <\exp\left(-k^{1-\frac{\nu}{2}}\right )\right\}
		\subset \bigcup_{m=1}^{m_0}I_m,
	\end{equation*}
	where $l=n^{\nu}$, $k=\left(\log n\right)^{\frac{8A}{\nu}}$, $m_0\le k^{Cs}l\le l^2\ll n$ and $ I_m $ are intervals such that $ |I_m|\le \exp\left(- k^{\frac{\nu}{4}}\right)=\exp\left(- \left(\log n\right)^{2A}\right) $.
Due to the definition of  $\cD_{c,\alpha}$,
\[
 \left \|l^2\omega\right\|>\frac{c}{l^{2A}}\gg\exp\left(- \left(\log n\right)^{2A}\right) .
\]
Thus, each $ I_k $ contains at most one point of the form $ x'+(m-n)\omega$ with $0\le m<l^2$, and the same conclusion holds for $x'+(m+n-l+1)\omega$. So, we have that  there exists $|m'|\le l^2 \ll\frac{n}{2}$ such that $ x'+(m'-n)\omega $ and $  x'+(m'+n-l+1)\omega  $
	are not in any of the $ I_m $. It yields that
	\begin{equation}\label{dist-E-H-N'-wd}
  \dist(E,\spec H_{l}(x'+(m'-n)\omega)),
		\dist\left(E,\spec H_{l}(x'+(m'+n-l+1)\omega)\right)
		\ge \exp\left(-k^{1-\frac{\nu}{2}}\right )\ge \exp\left(-\left(\log n\right)^{\frac{8A}{\nu}}\right).
\end{equation}
Choosing $i=m'$ in (\ref{E-spec-N-wd}), we have that
\[ \dist\left(E,\spec H_{m'+[-n,n]}(x')\right)\leq \exp\left(-n^{\frac{n}{4}}\right).\]
which implies that  there exists $ j_0 $ such that
	\[
		\left| E-E_{j_0}^{[-n,n]}(x_0) \right|\le \exp\left(-n^{\frac{n}{4}}\right),
	\]
where $ x_0=x'+m'\omega $. Combining it with (\ref{dist-E-H-N'-wd}), it follows that
	\begin{equation}\label{dispec-wd}
		\dist\left(E_{j_0}^{[-n,n]}(x_0),\spec H_{l}(x_0-n\omega)\right),\
		\dist\left(E_{j_0}^{[-n,n]}(x_0),\spec H_{l}(x_0+(n-l+1)\omega)\right)
		\ge \frac{1}{2}\exp\left(-\left(\log n\right)^{\frac{8A}{\nu}}\right).
	\end{equation}By the fact that
\begin{equation}\label{e-eforx}
 \left |E_j^{(n)}(x_1)-E_j^{(n)}(x_2)\right|\le \left\|H_n(x_1)-H_n(x_2)\right\|\le  C|x_1-x_2|,
\end{equation}
 (\ref{dispec-wd}) also holds
	for any $ |x-x_0|\le c\exp\left(-\left(\log n\right)^{\frac{8A}{\nu}}\right)$. By the fact that $\left(\log n\right)^{\frac{8A}{\nu}}\ll l^{\frac{\nu}{4}}$,  Corollary \ref{spec-to-decay} implies that
\[\log\left|f_l(x,E)\right|> lL_l(E)-l^{1-\frac{\nu}{4}}.\]
 By Lemma \ref{lem:Green}, it obtains
 \[\left| \cG_l\left(x-n\omega, E_{j_0}^{[-n,n]}(x)\right)(i,j) \right|
		\le \exp\left(-\frac{\log\lambda}{2}|i-j|+2Cl^{1-\frac{\nu}{4}}\right).\]
Similarly,
\[\left| \cG_l\left(x+(n-l+1)\omega, E_{j_0}^{[-n,n]}(x)\right)(i,j) \right|
		\le \exp\left(-\frac{\log\lambda}{2}|i-j|+2Cl^{1-\frac{\nu}{4}}\right).\]
 Due to the Poisson formula (\ref{Poisson}), we obtain that
 \begin{equation}\label{eig-decay-l}
  \left| \phi_{j_0}^{[-n,n]}(x;m)\right|\le \exp(-cl)=\exp\left(-cn^{\nu}\right),\  |m|\ge n-\frac{l}{2}.
 \end{equation}
 Let $\bar I$ denote  the set of $x\in \mathbb{T}$ satisfying (\ref{eig-decay-l}). Obviously,
 $$I:=\left\{x:|x-x_0|\le  c\exp\left(-\left(\log n\right)^{\frac{8A}{\nu}}\right)\right\}\subset \bar I.$$
Let $\xi$ be  the normalized projection of $ \phi_{j_0}^{[-n,n]}(x) $ onto the subspace corresponding to the
	interval $ [-n+1,n-1] $. Due to the fact that $|\phi_{j}^{[-n,n]}(x;\pm n)|<\exp\left(-cn^\nu\right)$, we have that $$|\xi_m|\le 2 \exp\left(-cn^\nu\right), \ |m|>n-\frac{l}{2}.$$
 So,
	\[
		\left\|\left(H(x)-E_{j}^{[-n,n]}(x)\right)\xi\right\|=\|\xi_{-n+1}\|+\|\xi_{n-1}\|<4 \exp\left(-cn^\nu\right).
	\]
At last, applying  Proposition \ref{size-of-spectral-segment}, we obtain that
\[ \mes E_{j}^{[-n,n]}(I)\ge \exp\left(-\left(\log n\right)^{\frac{32A}{\nu^2}}\right).\]
\end{proof}

Next we address the stability of the spectral segments produced via the previous lemma by induction on scales. The inductive step that will be stated in Lemma \ref{stabilization-approx-ef} is essentially similar to  Lemma~12.22 of \cite{B05} and Lemma 3.3 of \cite{GDSV}. Compared to them, we will use the following elementary lemma from basic perturbation theory to shorten ours:
\begin{lemma}[Lemma 2.40 in \cite{GSV16}]\label{lem:eigenvector-stability} Let
  $A $ be a $ N\times N $ Hermitian matrix.  Let
  $E,\epsilon \in \mathbb{R}$, $ \epsilon>0 $, and suppose there exists
  $\phi\in \mathbb{R}^N$, $\|\phi\|=1$, such that
  \begin{equation}\label{eq:2contraction1}
    \begin{split}
      \left\|(A-E)\phi\right\|< \varepsilon.
    \end{split}
  \end{equation}Then,
  there exists a normalized eigenvector $\psi$ of $A$ with an eigenvalue $E_0$
   such that
  \begin{equation}\label{eq:2contraction1aaaM}
    \begin{split}
    E_0\in (E-\varepsilon\sqrt2,E+\varepsilon\sqrt 2),\\
      \left|\langle \phi,\psi\rangle\right|\ge (2N)^{-1/2}.
    \end{split}
  \end{equation}
\end{lemma}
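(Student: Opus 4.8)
The plan is to reduce everything to the spectral decomposition of $A$ and a two-stage pigeonhole. Since $A$ is Hermitian, I would first fix an orthonormal eigenbasis $\{\psi_j\}_{j=1}^N$ of $\C^N$ with $A\psi_j=E_j\psi_j$ and $E_j\in\R$, and expand the given unit vector as $\phi=\sum_{j=1}^N c_j\psi_j$, where $c_j=\langle\psi_j,\phi\rangle$ and $\sum_j|c_j|^2=\|\phi\|^2=1$. The hypothesis (\ref{eq:2contraction1}) then becomes
\[
\sum_{j=1}^N|c_j|^2(E_j-E)^2=\|(A-E)\phi\|^2<\varepsilon^2 .
\]

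The first stage localizes the energy. Put $T=\{j:\ |E_j-E|\ge\varepsilon\sqrt2\}$; for $j\in T$ one has $(E_j-E)^2\ge 2\varepsilon^2$, so $2\varepsilon^2\sum_{j\in T}|c_j|^2\le\|(A-E)\phi\|^2<\varepsilon^2$, i.e.\ $\sum_{j\in T}|c_j|^2<1/2$. Hence, writing $S=\{1,\dots,N\}\setminus T=\{j:\ |E_j-E|<\varepsilon\sqrt2\}$, we obtain $\sum_{j\in S}|c_j|^2>1/2$, and in particular $S\ne\emptyset$. The second stage picks the eigenvector: since $|S|\le N$, by pigeonhole there is an index $j_0\in S$ with $|c_{j_0}|^2\ge|S|^{-1}\sum_{j\in S}|c_j|^2>\tfrac1{2N}$. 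Taking $\psi:=\psi_{j_0}$ and $E_0:=E_{j_0}$, the membership $j_0\in S$ yields $E_0\in(E-\varepsilon\sqrt2,E+\varepsilon\sqrt2)$, while $|\langle\phi,\psi\rangle|=|c_{j_0}|>(2N)^{-1/2}$, so both assertions of (\ref{eq:2contraction1aaaM}) hold.

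I do not expect a genuine obstacle here: this is elementary perturbation theory, which is why the statement is merely quoted from \cite{GSV16}. The one point that needs attention is that a \emph{single} $\psi$ must satisfy both requirements simultaneously, so the pigeonhole in the second stage has to be carried out over the restricted index set $S$ — the good energy window — rather than over all of $\{1,\dots,N\}$; the latter would only produce a vector with large overlap but no control on its eigenvalue. If $E_0$ turns out to be a degenerate eigenvalue, then $\psi_{j_0}$ is simply one of the chosen orthonormal basis vectors spanning its eigenspace and $c_{j_0}$ is its coordinate, so the argument is unaffected; the hypothesis $\phi\in\R^N$ is likewise not used beyond $\|\phi\|=1$.
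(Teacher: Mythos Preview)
Your proof is correct and is the standard spectral-decomposition argument for this elementary perturbation lemma. The paper does not supply its own proof---it merely quotes the statement from \cite{GSV16}---so there is nothing to compare against beyond noting that your argument is exactly the expected one.
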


\begin{lemma}\label{stabilization-approx-ef}
	 Assume that $ E_j^{[-n,n]}(I,\omega)\subset (E',E'') $ and  for each $ x\in I $, there exists $ \xi $,
	$ \|\xi\|=1 $, with support in $ [-n+1,n-1] $, such that
	\begin{equation}\label{approx-ef}
		\left\|\left(H(x)-E_j^{[-n,n]}(x)\right)\xi\right\|<\exp\left(-cn^{\nu}\right).
	\end{equation}
	 Then, there exists $n_1$ satisfying $n_1\sim n^{100}$ such that
	 we can partition $ I $ into intervals $ I_m $,
	$ m\le n_1^{C_0s} $, where $C_0=C_0(v)$, and for each $ I_m $, there exists $ j_1\in[-n_1,n_1] $ such that
	\begin{equation*}
		\left| E_j^{[-n,n]}(x)-E_{j_1}^{[-n_1,n_1]}(x) \right|\le 2\exp\left(-cn^{\nu}\right), x\in I_m,
	\end{equation*}
	and for each $ x\in I_m $, there exists $ \xi_1 $, $ \|\xi_1\|=1 $, with support in $ [-n_1+1,n_1-1] $, satisfying
	\begin{equation*}
		\left\|\left(H(x)-E_{j_1}^{[-n_1,n_1]}(x)\right)\xi_1\right\|<\exp\left(-\frac{c}{2}n_1^{\nu}\right).
	\end{equation*}	
\end{lemma}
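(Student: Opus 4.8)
The plan is to pass from scale $n$ to scale $n_1:=[n^{100}]$ by the elementary perturbation Lemma~\ref{lem:eigenvector-stability}, to produce the boundary--decaying approximate eigenvector at scale $n_1$ exactly as in the proof of Lemma~\ref{initial-spectral-segment}, and to keep the number of pieces of the partition under control by the semialgebraic counting of Corollary~\ref{number}. First I would observe that, $\xi$ being supported in $[-n+1,n-1]\subset[-n_1+1,n_1-1]$, one has $H(x)\xi=H_{[-n_1,n_1]}(x)\xi$, so that (\ref{approx-ef}) reads $\left\|\left(H_{[-n_1,n_1]}(x)-E_j^{[-n,n]}(x)\right)\xi\right\|<\exp\left(-cn^{\nu}\right)$. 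Applying Lemma~\ref{lem:eigenvector-stability} to the Hermitian matrix $H_{[-n_1,n_1]}(x)$ with $N=2n_1+1$ and $\varepsilon=\exp\left(-cn^{\nu}\right)$ yields a normalized eigenvector $\psi=\psi_{j_1(x)}^{[-n_1,n_1]}(x)$, with eigenvalue $E_{j_1(x)}^{[-n_1,n_1]}(x)$, such that $\left|E_j^{[-n,n]}(x)-E_{j_1(x)}^{[-n_1,n_1]}(x)\right|\le\sqrt{2}\,\exp\left(-cn^{\nu}\right)<2\exp\left(-cn^{\nu}\right)$. This already gives the required proximity of the eigenvalues; what is left is to make the index $j_1$ constant on pieces of $I$ and to upgrade $\psi$ to a vector supported in $[-n_1+1,n_1-1]$.

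For the latter, fix $x\in I$, write $j_1=j_1(x)$ and $E_*=E_{j_1}^{[-n_1,n_1]}(x)$, and set $l_1=[n_1^{\nu}]$, $k_1=(\log n_1)^{8A/\nu}$. Arguing exactly as in the proof of Lemma~\ref{initial-spectral-segment}, I would combine the Wegner estimate Proposition~\ref{Wengerforqs} at scale $l_1$ with the Diophantine property of $\omega$ --- the number $k_1^{Cs}l_1$ of bad intervals being far smaller, for $n$ large, than the number of $\omega$--translates that remain $\exp\left(-k_1^{\nu/4}\right)$--separated --- to find $|r|\le n_1/2$ with $\dist\left(E_*,\spec H_{l_1}(x-n_1\omega+r\omega)\right)\ge\exp\left(-k_1^{1-\frac{\nu}{2}}\right)$, and similarly a good window near $+n_1$. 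Then Corollary~\ref{spec-to-decay}, Lemma~\ref{lem:Green} and the Poisson formula (\ref{Poisson}), using the Dirichlet condition $\psi(n_1+1)=\psi(-n_1-1)=0$, give $\left|\psi_{j_1}^{[-n_1,n_1]}(x;m)\right|\le\exp\left(-cl_1\right)=\exp\left(-cn_1^{\nu}\right)$ for all $|m|\ge n_1-l_1/2$. Letting $\xi_1$ be the normalization of the restriction of $\psi_{j_1}^{[-n_1,n_1]}(x)$ to $[-n_1+1,n_1-1]$, the computation at the end of the proof of Lemma~\ref{initial-spectral-segment} yields $\left\|\left(H(x)-E_{j_1}^{[-n_1,n_1]}(x)\right)\xi_1\right\|\lesssim\exp\left(-cn_1^{\nu}\right)<\exp\left(-\frac{c}{2}n_1^{\nu}\right)$.

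It remains to partition $I$. For each $j_1\in[-n_1,n_1]$ let $\mathcal{X}_{j_1}$ be the set of $x\in I$ such that $\left|E_j^{[-n,n]}(x)-E_{j_1}^{[-n_1,n_1]}(x)\right|<2\exp\left(-cn^{\nu}\right)$ and there exists a unit vector $\xi_1$ supported in $[-n_1+1,n_1-1]$ with $\left\|\left(H(x)-E_{j_1}^{[-n_1,n_1]}(x)\right)\xi_1\right\|<\exp\left(-\frac{c}{2}n_1^{\nu}\right)$. Replacing $\lambda v$ by its polynomial truncation $\lambda\tilde v_{n_1}$ of degree at most $Cn_1^{4s}$, which perturbs these quantities by at most $\exp\left(-n_1^2\right)$ (cf.\ (\ref{eq:V-tilde})), the condition defining $\mathcal{X}_{j_1}$ becomes semialgebraic in $x$ of degree at most $Cn_1^{Cs}$, so Corollary~\ref{number} with $d=1$ shows that $\mathcal{X}_{j_1}$ is a union of at most $n_1^{C_0s}$ intervals. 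By the second paragraph the sets $\mathcal{X}_{j_1}$, $|j_1|\le n_1$, cover $I$, and hence one may extract a partition of $I$ into intervals $I_m$, each contained in some $\mathcal{X}_{j_1}$, with $m\le(2n_1+1)\,n_1^{C_0s}$, which is $\le n_1^{C_0s}$ after enlarging $C_0$; on each $I_m$, with the associated $j_1$, both assertions of the lemma hold.

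The main difficulty is precisely to keep the number of partition intervals bounded by $n_1^{C_0s}$: a naive subdivision of $I$ that makes the window shift $r(x)$ of the second step locally constant would produce far too many pieces, so one has to go through the polynomial truncation and Corollary~\ref{number}, in the spirit of the last paragraph of the proof of Proposition~\ref{ldt-thm}. A secondary technical point, again borrowed from the proof of Lemma~\ref{initial-spectral-segment}, is to check that at the new scale there is enough Diophantine room, i.e.\ $k_1^{Cs}l_1\ll\exp\left((\log n_1)^{2A}/A\right)$ with $k_1=(\log n_1)^{8A/\nu}$ and $l_1=[n_1^{\nu}]$; this, together with $n_1>n_0$, is where the choice of a sufficiently large fixed power $n_1\sim n^{100}$ is used.
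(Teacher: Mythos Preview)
Your first and third paragraphs match the paper, but the second paragraph has a genuine gap. When $x\in I$ is fixed you cannot place the length--$l_1$ window at the boundary of $[-n_1,n_1]$: the Diophantine/Wegner argument only produces a shift $r=r(x)$ with $0\le r\lesssim l_1^2\gg l_1$, so the good window is $[-n_1+r,-n_1+r+l_1]$ (and similarly on the right). In the Poisson formula on that window the boundary datum is $\psi(-n_1+r-1)$, not $\psi(-n_1-1)$, and this is not controlled by the Dirichlet condition. Hence you only obtain decay of $\psi_{j_1}^{[-n_1,n_1]}(x;\cdot)$ on a band $n_1-r-\tfrac{3l_1}{4}\le |m|\le n_1-r-\tfrac{l_1}{4}$, not on all of $|m|\ge n_1-l_1/2$. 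Your truncation to $[-n_1+1,n_1-1]$ then fails: the entries $\psi(\pm n_1),\psi(\pm(n_1-1))$ may be of order one, and so is $\|(H(x)-E_{j_1})\xi_1\|$. The reference to ``the computation at the end of the proof of Lemma~\ref{initial-spectral-segment}'' is misleading, because there the base point $x_0$ was \emph{chosen} so that the windows sit at the boundary; here $x$ is given and no such choice is available.

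The paper's fix is to truncate instead to $[-k_1'',k_1'']$ with $k_1''=n_1-r-l_1/4$, i.e.\ just inside the decay band, and this is exactly where the second conclusion of Lemma~\ref{lem:eigenvector-stability}, which you did not use, becomes indispensable. From $|\langle \xi,\psi_{j_1}\rangle|\ge(2n_1)^{-1/2}$ and $\mathrm{supp}\,\xi\subset[-n+1,n-1]\subset[-k_1'',k_1'']$ one gets $\sum_{|m|\le k_1''}|\psi_{j_1}(m)|^2\ge(2n_1)^{-1}$, so the normalized restriction $\xi_1$ satisfies $|\xi_1(m)|\le\sqrt{2n_1}\,|\psi_{j_1}(m)|\le\exp(-\tfrac{c}{2}n_1^{\nu})$ at the edges $|m|=k_1''$, and the approximate--eigenvector bound follows. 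Without this mass lower bound the eigenvector chosen by Lemma~\ref{lem:eigenvector-stability} could a priori live almost entirely in $[-n_1,n_1]\setminus[-k_1'',k_1'']$, and the argument collapses.
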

\begin{proof}
	Fix $ x\in I $ and let $ \xi $ be as in (\ref{approx-ef}). Due to the fact that
\[H_{[-n,n]}(x)\xi = H_{[-n_1,n_1]}(x)\xi,\]
we have
\[\left\|\left(H_{[-n_1,n_1]}(x)-E_j^{[-n,n]}(x)\right)\xi\right\|<\exp\left(-cn^{\nu}\right).\]
By Lemma \ref{lem:eigenvector-stability}, it yields that there exists $ j_1(x)\in[-n_1,n_1] $ such that
\begin{equation}\label{Ej-Ej1}
		\left|E_j^{[-n,n]}(x)-E_{j_1}^{[-n_1,n_1]}(x)\right|<\sqrt{2}\exp\left(-cn^{\nu}\right),
	\end{equation}
\begin{equation}\label{psi-mass}
		\left| \langle \xi, \phi_{j_1}^{[-n_1,n_1]}(x)\rangle \right|\ge 1/\sqrt{2n_1}.
	\end{equation}
It is the same to the proof of Lemma \ref{initial-spectral-segment} that, there exists $0< k'_1<l^2_1<\frac{n_1}{2}$ such that $ x'+(k'_1-n_1)\omega $ and $  x'+(-k_1+n_1-l_1+1)\omega  $
	are not in any of the $ I_k $, where $ |I_k|\le \exp\left(- (\log n_1)^{2A}\right) $ and  comes from
	\begin{equation*}
		\left\{ x\in \TT: \dist\left(E,\spec H_{l_1}(x)\right) <\exp\left(-k_1^{1-\frac{\nu}{2}}\right )\right\}
		\subset \bigcup_{k=1}^{m_1}I_k.
	\end{equation*}	
Similarly,
	\begin{align*}
		& \left| \cG_{[k'_1-n_1,k'_1-n_1+l_1]}(x,E_{j_1}^{[-n_1,n_1]})(i,j) \right|
		\le \exp\left(-\frac{\log\lambda}{2}|i-j|+2Cl_1^{1-\frac{\nu}{4}}\right).\\
		& \left| \cG_{[-k'_1+n_1-l,-k'_1+n_1]}(x,E_{j_1}^{[-n_1,n_1]})(i,j) \right|
		\le\exp\left(-\frac{\log\lambda}{2}|i-j|+2Cl_1^{1-\frac{\nu}{4}}\right).
	\end{align*}
and
\begin{equation}\label{psi-decay-wd}
		\left|\phi_{j_1}^{[-n_1,n_1]}(x;j)\right|\le \exp\left(-cn_1^{\nu}\right), \  n_1-k'_1-\frac{3l_1}{4}\le |j| \le n_1-k'_1-\frac{l_1}{4}.
	\end{equation}
Let $k''_1=n_1-k'_1-\frac{l_1}{4}$ and  $ \eta $ be the normalized projection of $ \phi_{j_1}^{[-n_1,n_1]}(x) $ onto the subspace corresponding to the
	interval $ [-k_1'',k_1''] $. Due to the fact that $k_1''\gg n$ and  (\ref{psi-mass}), it implies that
\[\sum_{m=-k}^k\left|\phi_{j_1}(x,m)\right|^2\ge \sum_{m=-n}^n\left|\phi_{j_1}(x,m)\right|^2\ge \frac{1}{n_1}.\]
Therefore, for any $k_1''-\frac{l_1}{2}\le |m| \le k_1''$,
\begin{equation}\label{eta-decay-wd}
		|\xi_1(m)|\le \sqrt{n_1}\exp\left(-cn_1^{\nu}\right)\le \exp\left(-\frac{c}{2}n_1^{\nu}\right),
	\end{equation}
	\begin{equation}\label{existence-of-approx-ef-wd}
		\left\|\left(H(x)-E_{j_1}^{[-n_1,n_1]}(x)\right)\xi_1\right\|\le \exp\left(-\frac{c}{2}n_1^{\nu}\right).
	\end{equation}

Now we can declare that we have finished this proof. The reason is that the last thing for our proof is to estimate the number of components of the set of phases $ x $ that satisfy (\ref{Ej-Ej1}) and (\ref{existence-of-approx-ef-wd}).  The main idea is to use the semialgebraic set theory to approximate  the potential $ \lambda v $  by a trigonometric polynomials with the fixed $\omega$, just as what we have done to obtain the number of the intervals in the proof of Proposition \ref{ldt-thm}. By recalling the definitions of $\tilde v_n$ and $\tilde H^t(x)$, we have that
\[\left\|H(x)-\tilde H^t(x)\right\|\le \lambda \sup_{x\in\TT}\left|\tilde v_n-v\right|\le \exp(-n).\]
Then, the method in the proofs of Lemma 4.3 in \cite{GDSV} and  Lemma 12.22 in \cite{B05} can been applied here directly.
\end{proof}

 Now we complete this induction and  obtain the stability of the spectral segments.
\begin{lemma}\label{finite-segment-to-full-spectrum-Bourgain}
	Let $ I\subset [0,1] $ be an interval and
	let $ j\in[-n,n] $. Assume that $ E_j^{[-n,n]}(I,\omega)\subset (E',E'') $ and that for each $ x\in I $, there exists $ \xi $,
	$ \|\xi\|=1 $, with support in $ [-n+1,n-1] $, such that
	\begin{equation}\label{approx-ef-bis}
			\left\|\left(H(x)-E_{j}^{[-n,n]}(x)\right)\xi\right\|<\exp\left(-cn^\nu\right).
	\end{equation}
	Then
	\begin{equation*}
		\mes\left(E_j^{[-n,n]}(I)\setminus \cS_\omega\right)< \exp\left(-\frac{c}{4}n^\nu\right).
	\end{equation*}
\end{lemma}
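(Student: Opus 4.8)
The plan is to iterate Lemma~\ref{stabilization-approx-ef} along a sequence of scales $n=n^{(0)}<n^{(1)}<n^{(2)}<\cdots$ with $n^{(r+1)}\sim\bigl(n^{(r)}\bigr)^{100}$, tracking at each stage a partition of $I$ into semialgebraically-controlled subintervals together with, on each piece, a choice of eigenvalue index and an approximate eigenvector with the required decay. Concretely, starting from the hypotheses (\ref{approx-ef-bis}) at scale $n$, one application of Lemma~\ref{stabilization-approx-ef} partitions $I$ into at most $(n^{(1)})^{C_0s}$ intervals $I_m^{(1)}$, on each of which there is $j_1\in[-n^{(1)},n^{(1)}]$ with
\[
\left|E_j^{[-n,n]}(x)-E_{j_1}^{[-n^{(1)},n^{(1)}]}(x)\right|\le 2\exp\left(-cn^\nu\right),\qquad x\in I_m^{(1)},
\]
and an approximate eigenvector $\xi_1$ at scale $n^{(1)}$ with error $<\exp\bigl(-\tfrac c2 (n^{(1)})^\nu\bigr)$. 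Since $n^{(1)}\sim n^{100}$, the new error $\exp\bigl(-\tfrac c2(n^{(1)})^\nu\bigr)$ is far smaller than $\exp\bigl(-c\,(n^{(1)})^\nu\bigr)$ after harmlessly relabelling the constant, so the hypothesis of Lemma~\ref{stabilization-approx-ef} is reinstated at the new scale and the induction can continue. The key bookkeeping point is that the eigenvalue displacement telescopes: at the $r$-th stage the accumulated error between $E_j^{[-n,n]}(x)$ and the current scale-$n^{(r)}$ eigenvalue is bounded by
\[
2\sum_{i=0}^{r-1}\exp\left(-c\,(n^{(i)})^\nu\right)\le 4\exp\left(-cn^\nu\right),
\]
because the scales grow super-polynomially and the sum is dominated by its first term.

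Next I would pass to the limit. For a fixed $x\in I$, the nested intervals of the partitions shrink to a point (their lengths are controlled by $\exp(-(\log n^{(r)})^{2A})\to 0$ as in the proof of Lemma~\ref{initial-spectral-segment}), so for $x$ outside a negligible set one gets a coherent sequence of approximate eigenvectors $\xi_r$ supported in $[-n^{(r)}+1,n^{(r)}-1]$ with
\[
\left\|\bigl(H(x)-E_{j_r}^{[-n^{(r)},n^{(r)}]}(x)\bigr)\xi_r\right\|<\exp\left(-\tfrac c2(n^{(r)})^\nu\right)\longrightarrow 0,
\]
while the eigenvalues $E_{j_r}^{[-n^{(r)},n^{(r)}]}(x)$ form a Cauchy sequence converging to a limit $E_\infty(x)$ with $\bigl|E_\infty(x)-E_j^{[-n,n]}(x)\bigr|\le 4\exp(-cn^\nu)$. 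Feeding $\xi_r$ into the spectral criterion machinery (the Poisson-formula Lemma~\ref{Poissonlem}, exactly as in Lemma~\ref{spectrum-criterion}, together with the Green-function decay of Lemma~\ref{lem:Green} supplied along the way), one concludes $E_\infty(x)\in\Spec H(x)=\cS_\omega$. Thus every $x\in I$ outside an exceptional set $\cB$ contributes a spectral point within $4\exp(-cn^\nu)$ of $E_j^{[-n,n]}(x)$, which forces $E_j^{[-n,n]}(x)$ itself to lie within $4\exp(-cn^\nu)$ of $\cS_\omega$.

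The measure estimate then comes from controlling the exceptional set. At each stage the set of phases removed has measure at most (the number of excluded intervals)$\times$(their length), i.e.\ at most $(n^{(r)})^{Cs}\cdot l_r\cdot\exp(-(\log n^{(r)})^{2A})$ together with the Wegner-type bad set $\exp(-ck_r^{\nu/2})$ from Proposition~\ref{Wengerforqs}, all of which are super-exponentially small in $n^{(r)}$; summing the geometric-like series over $r$ gives $\mes(\cB)<\exp(-\tfrac c4 n^\nu)$ after adjusting constants. Since $E_j^{[-n,n]}$ is Lipschitz in $x$ (estimate (\ref{e-eforx})), the image $E_j^{[-n,n]}(\cB)$ has measure at most $C\exp(-\tfrac c4 n^\nu)$ as well, and $E_j^{[-n,n]}(I\setminus\cB)\subset\cS_\omega$ up to the displacement $4\exp(-cn^\nu)$, so $\mes\bigl(E_j^{[-n,n]}(I)\setminus\cS_\omega\bigr)<\exp(-\tfrac c4 n^\nu)$ as claimed.

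\textbf{Main obstacle.} The delicate point is not the limiting argument but making the induction on scales genuinely self-sustaining: one must verify that at every stage the approximate-eigenvector error still dominates all the error terms generated by the avalanche-principle/Green-function estimates at the new scale (the $Cn^{1-\nu}$-type losses, the $l_r^{1-\nu/4}$ losses in the Green-function bounds, and the shrinking of the partition pieces), so that the quantitative hypothesis (\ref{approx-ef-bis}) is exactly reproduced with a uniform constant $c$ that does not degrade to $0$ over infinitely many steps. The choice $n^{(r+1)}\sim (n^{(r)})^{100}$ is what buys the needed room, and checking this hierarchy of inequalities — together with the semialgebraic count of components surviving from Lemma~\ref{stabilization-approx-ef} staying polynomial in the scale — is the real content; everything else is routine telescoping.
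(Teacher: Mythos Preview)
Your iteration of Lemma~\ref{stabilization-approx-ef} along scales $n^{(r+1)}\sim(n^{(r)})^{100}$ is exactly the right engine, and the telescoping bound $\sum_r\exp(-c(n^{(r)})^\nu)\le 4\exp(-cn^\nu)$ is fine. The argument breaks, however, at the final step. Having $E_\infty(x)\in\cS_\omega$ with $|E_\infty(x)-E_j^{[-n,n]}(x)|\le 4\exp(-cn^\nu)$ for every $x$ only tells you that every point of $E_j^{[-n,n]}(I)$ lies in a $4\exp(-cn^\nu)$-neighbourhood of the closed set $\cS_\omega$. That says \emph{nothing} about $\mes\bigl(E_j^{[-n,n]}(I)\setminus\cS_\omega\bigr)$: a Cantor-type spectrum can be $\epsilon$-dense in an interval while having tiny (even zero) measure there. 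Your sentence ``$E_j^{[-n,n]}(I\setminus\cB)\subset\cS_\omega$ up to the displacement $4\exp(-cn^\nu)$, so $\mes(\ldots)<\exp(-\tfrac c4 n^\nu)$'' is precisely this non sequitur. (Incidentally, Lemma~\ref{stabilization-approx-ef} removes no phases at all --- it partitions the whole interval --- so your exceptional set $\cB$ is actually empty, which makes the gap sharper, not milder.)

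The missing idea is to do the bookkeeping in \emph{energy} space, not phase space. At stage $r$ one forms, over the current partition $\{I_m\}$, the symmetric-difference set
\[
\cE_r=\bigcup_m\Bigl(E_{j_{r-1}}^{[-n^{(r-1)},n^{(r-1)}]}(I_m)\ominus E_{j_r}^{[-n^{(r)},n^{(r)}]}(I_m)\Bigr).
\]
Since both eigenvalue curves are continuous and uniformly $2\exp(-c(n^{(r-1)})^\nu)$-close on each $I_m$, each summand has measure $\le 4\exp(-c(n^{(r-1)})^\nu)$, and there are only $(n^{(r)})^{C_0s}$ pieces, so $\mes(\cE_r)\le\exp(-\tfrac c3(n^{(r-1)})^\nu)$ and $\sum_r\mes(\cE_r)\le\exp(-\tfrac c4 n^\nu)$. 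The point of the symmetric difference is that any $E\in E_j^{[-n,n]}(I)\setminus\bigcup_{l\le r}\cE_l$ is \emph{exactly} an eigenvalue $E_{j_r}^{[-n^{(r)},n^{(r)}]}(x')$ for some $x'$ (not merely close to one), so the approximate-eigenvector bound at scale $n^{(r)}$ gives $\dist(E,\cS_\omega)<\exp(-\tfrac c2(n^{(r)})^\nu)$. Letting $r\to\infty$ forces $E\in\cS_\omega$, hence $E_j^{[-n,n]}(I)\setminus\cS_\omega\subset\bigcup_r\cE_r$. This is what your phase-space limit $E_\infty(x)$ cannot deliver: you need the energy $E$ itself, not a nearby surrogate, to persist through every scale.
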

\begin{proof}
	Let  $ n_1=n^{100} $. Using Lemma \ref{stabilization-approx-ef}, we
	partition $ I $ into intervals $ I_m $,
	$ m\le n_1^{Cs} $, and for each $ I_m $, there exists $ j_1\in[-n_1,n_1] $ such that
	\begin{equation}\label{Ej-Ej1-bis}
		\left| E_j^{[-n,n]}(x)-E_{j_1}^{[-n_1,n_1]}(x) \right|\le 2\exp\left(-cn^\nu\right), x\in I_m,
	\end{equation}
	and for each $ x\in I_m $, there exists $ \xi $, $ \|\xi\|=1 $, with support in $ [-n_1+1,n_1-1] $, satisfying
	\begin{equation}\label{approx-ef-N1}
		\left\|\left(H(x)-E_{j_1}^{[-n_1,n_1]}(x)\right)\xi\right\|<\exp\left(-\frac{c}{2}n^\nu\right).
	\end{equation}	
 	Let
	\begin{equation*}
		\cE_{n,1}= \bigcup_m\left( E_j^{[-n,n]}(I_m)\ominus E_{j_1}^{[-n_1,n_1]}(I_m)\right),
	\end{equation*}
	where $ \ominus $ denotes the symmetric set difference
\[\mathcal{S}_1\ominus \mathcal{S}_2:=\left(\mathcal{S}_1-\mathcal{S}_2\right) \bigcup\left(\mathcal{S}_2-\mathcal{S}_1\right).\]
By the continuity of the parametrization of the eigenvalues, $m<n_1^{Cs}$, the setting $n_1=n^{100}$ and
	(\ref{Ej-Ej1-bis}), it follows that
$$ \mes(\cE_{n,1})\le \exp\left(-\frac{c}{3}n^\nu\right).$$
	
	Note that (\ref{approx-ef-bis}) implies that $ \dist(E,\cS_\omega)<\exp\left(-cn^\nu\right) $ for all $ E\in E_j^{[-n,n]}(I) $.
	At the same time, if $ E\in E_j^{[-n,n]}(I)\setminus \cE_{n,1} $,
	then $ E\in E_{j_1}^{[-n_1,n_1]}(I_m) $ for some $ m $, and (\ref{approx-ef-N1}) implies that
	$$ \dist(E,\cS_\omega)<\exp\left(-\frac{c}{2}n^\nu\right).$$

Let $ n_k=n^{100^k} $. Through iteration we obtain sets $ \cE_{n,k} $ such that
	$$ \mes(\cE_{n,k})<\exp\left(-\frac{c}{4}n^\nu\right),$$ and if
	$ E\in E_j^{[-n,n]}(I)\setminus \bigcup_{l\le k}\cE_{n,l} $, then
	$$ \dist(E,\cS_\omega)<\exp\left(-\frac{c}{3}n^\nu\right).$$ Therefore, we finish this proof by noting that
	\begin{equation*}
		E_{j_0}^{[-n,n]}(I)\setminus \cS_\omega\subset \bigcup_k \cE_{n,k}.
	\end{equation*}
\end{proof}

\subsection{The proof of the homogeneity}
\begin{proof}[The proof of Theorem \ref{main-thm}]   Lemma \ref{initial-spectral-segment} implies that there exist  $ j\in[-n,n] $ and of a segment $ I $,
	$ |I|>c\exp\left(-\left(\log n\right)^{\frac{8A}{\nu}}\right) $, centered at a point $ x_0 $, such that
	\begin{equation}\label{EN-E0}
		|E_{j}^{[-n,n]}(x_0)-E|\le\exp\left(-n^{\frac{\nu}{4}}\right),
	\end{equation}
	and for any $ x\in I $ there exists $ \xi $, $ \|\xi\|=1 $, with support in $ [-n+1,n-1] $, such that
	\begin{equation*}
		\left\|\left(H(x)-E_{j}^{[-n,n]}(x)\right)\xi\right\|< \exp\left(-cn^{\nu}\right).
	\end{equation*}
	Then, we can apply Lemma \ref{finite-segment-to-full-spectrum-Bourgain} to get
	\begin{equation*}
		\mes\left(E_j^{[-n,n]}(I)\setminus\cS_\omega\right)\le \exp\left(-\frac{c}{4}n^{\nu}\right).
	\end{equation*}
Recall that
\[ \mes E_{j}^{[-n,n]}(I)> \exp\left(-\left(\log n\right)^{\frac{32A}{\nu^2}}\right).\]
Hence, we choose $\sigma_0=\exp\left(-\left(\log n_0\right)^{\frac{32A}{\nu^2}}\right)$. Then, for any $\sigma<\sigma_0$, there exists some $n>n_0$ such that $\exp\left(-\left(\log n\right)^{\frac{32A}{\nu^2}}\right)\sim \sigma$, and
\begin{eqnarray*}
  &&\left|\left(E_0-\exp\left(-\left(\log n\right)^{\frac{32A}{\nu^2}}\right),E_0+\exp\left(-\left(\log n\right)^{\frac{32A}{\nu^2}}\right)\right)\cap\cS_\omega\right|\\
  &\ge& \exp\left(-\left(\log n\right)^{\frac{32A}{\nu^2}}\right)-\exp\left(-n^{\frac{\nu}{4}}\right)-\exp\left(-\frac{c}{4}n^{\nu}\right)\\
&\ge& \frac{1}{2}\exp\left(-\left(\log n\right)^{\frac{32A}{\nu^2}}\right).
\end{eqnarray*}
It implies that $\diam(\cS_{\omega})>0$ and  the homogeneity holds for $\tau=\left\{\begin{array}
  {cc}
  \frac{1}{4},& \sigma\le \sigma_0,\\
  \frac{\sigma_0}{4\diam(\cS_{\omega})},  &  \sigma>\sigma_0.
\end{array}\right..$
\end{proof}

\end{document}